 \newtheorem{theorem}{Theorem}[section]
 \newtheorem{proposition}[theorem]{Proposition}
 \newtheorem{lemma}[theorem]{Lemma}
\theoremstyle{definition}
 \newtheorem{definition}[theorem]{Definition}
\theoremstyle{remark}
 \newtheorem{remark}[theorem]{Remark}
\newcommand{\ep}{\varepsilon}
\newcommand{\p}{\partial}
\newcommand{\RR}{\mathbb{R}}
\newcommand{\TT}{\mathbb{T}}
\newcommand{\UU}{\mathcal{U}}
\newcommand{\VV}{\mathcal{V}}
\newcommand{\WW}{\mathcal{W}}
\newcommand{\TW}{\widetilde{\mathcal{W}}}
\newcommand{\lr}[1]{\left\langle #1 \right\rangle}
\numberwithin{equation}{section}
\begin{document}
%%%%%%
%%%%%% Make Title
%%%%%%
\title[]{
A fourth-order dispersive flow equation for closed curves  
on compact Riemann surfaces
}
\author[E. ONODERA]{Eiji ONODERA}
\address[Eiji Onodera]{Department of Mathematics, 
Faculty of Science, 
Kochi University, 
Kochi 780-8520,Japan}
\email{onodera@kochi-u.ac.jp}
\subjclass[2000]{Primary 53C44; Secondary 35Q35, 35Q55, 35G61}
\keywords{dispersive flow, 
geometric analysis, 
local existence and uniqueness, 
loss of derivatives, 
energy method, 
gauge transformation, 
constant sectional curvature}
\begin{abstract}
A fourth-order dispersive flow equation for closed curves on 
the canonical two-dimensional unit sphere 
arises in some contexts in physics and fluid mechanics. 
In this paper, a geometric generalization of the sphere-valued model 
is considered, where the solutions are supposed to take values in 
compact Riemann surfaces. 
As a main results, 
time-local existence and the uniqueness of 
a solution to the initial value problem is established 
under the assumption that the sectional curvature of 
the Riemann surface is constant. 
The analytic difficulty comes from the
so-called loss of derivatives 
and the absence of the local smoothing effect. 
The proof is based on the geometric energy method combined with 
a kind of gauge transformation to eliminate the loss of derivatives. 
Specifically, to show the uniqueness of the solution, 
the detailed geometric analysis of 
the solvable structure for the equation is presented. 
\end{abstract}
\maketitle
%%%
%%%%
%%%%%%
%%%%%%
%%%%%% Section 1. Introduction
%%%%
%%%%
\section{Introduction}
\label{section:introduction}
%%%
Dispersive partial differential equations 
have been extensively studied in mathematical research.
Many studies have paid attention to real or 
complex-valued functions as solutions to these equations. 
However, some nonlinear dispersive partial differential 
equations in contexts in classical mechanics and fluid mechanics  
require their solutions to take values in a (curved) Riemannian
manifold.
In general, their nonlinear structures depend on the geometric 
setting of the manifold.
Therefore, concerning how to solve their initial value problem,  
geometric analysis of the relationship between their solvable structure  
and the geometric setting of the manifold plays an essential role. 
\par 
In this field, 
after the pioneering work of Koiso \cite{koiso}, 
the method of geometric analysis 
for the so-called one-dimensional Schr\"odinger flow equation,  
the higher-dimensional generalization 
and a third-order analogue 
has been developed extensively. 
Many results on how to solve their initial value
problem have been established 
mainly from the following three points of view:    
analysis of the solvable structure of 
dispersive partial differential equations(systems), 
an application of Riemannian geometry, 
and analysis of nonlinear partial differential equations 
with physical backgrounds.
See, e.g., \cite{CSU, chihara, chihara2, CO, KLPST, koiso, McGahagan, NSVZ, 
onodera0, onodera1, onodera3}, 
and references therein. 
In this paper, we study a fourth-order analogue  
whose solutions are required to take values in a compact Riemann
surface. 
This is a continuation of \cite{CO2, onodera2} 
and presents the answer to the problem suggested in \cite{chihara2}.
\par
The setting of our problem is stated as follows:  
Given a compact Riemann surface $N$ 
with the complex structure $J$ 
and with a hermitian metric $g$, 
consider the following initial value problem 
\begin{alignat}{2}
 & u_t
  =
  a\,J_u\nabla_x^3u_x
  +
  \{\lambda+ b\, g(u_x,u_x)\}J_u\nabla_xu_x
  +
  c\,g(\nabla_xu_x,u_x)J_uu_x
 \quad &\text{in}\quad
  &\mathbb{R}{\times} \mathbb{T},
\label{eq:pde}
\\
& u(0,x)
  =
  u_0(x)
\quad&\text{in}\quad &\mathbb{T}. 
\label{eq:data}
\end{alignat}
Here 
$\TT=\RR/2\pi \mathbb{Z}$ is the one-dimensional flat torus, 
$u=u(t,x):\RR\times \TT\to N$ is the unknown map 
describing the deformation of 
closed curves lying on $N$ parameterized by $t$,  
$u_0=u_0(x):\TT\to N$ is the given initial map, 
$u_t=du(\frac{\p}{\p t})$, 
$u_x=du(\frac{\p}{\p x})$, 
$du$ is the differential of the map $u$, 
$\nabla_x$ is the covariant derivative along $u$ in $x$,  
$J_u:T_uN\to T_uN$ is the complex structure at $u\in N$, 
$a$, $b$, $c$, and $\lambda$ are real constants. 
If $a,b,c=0$ and $\lambda=1$, 
\eqref{eq:pde} is reduced to the second-order dispersive equation 
of the form 
\begin{equation}
\label{eq:SM}
u_t=J_u\nabla_xu_x,
\end{equation}
which is called a one-dimensional Schr\"odinger flow equation. 
As a fourth-order analogue of \eqref{eq:SM}, 
we call \eqref{eq:pde} with $a\ne 0$ 
a fourth-order dispersive flow equation. 
Hereafter it is assumed that $a\ne 0$.   
\par 
An example of \eqref{eq:pde} with $a\ne 0$ arises in two areas of 
physics, where $N$ is supposed to be the 
canonical two-dimensional unit sphere $\mathbb{S}^2$. 
Indeed, if $N=\mathbb{S}^2$ equipped with the complex structure acting 
as $\pi/2$-degree rotation on each tangent plane  
and with the canonical metric induced from the Euclidean metric in
$\RR^3$,  
\eqref{eq:pde} is described by      
\begin{align}
 & u_t
  =
  u\wedge
  \left[
  a\,\p_x^3u_{x}
  +
  \{\lambda+(a+b)\, (u_x,u_x)\} \p_xu_{x}
  +
  (5a+c)\, (\p_xu_{x},u_x) u_{x}
  \right], 
\label{eq:pdes2}
\end{align}
where $u:\RR\times \TT\to \mathbb{S}^2\subset \RR^3$, 
$\p_x$ is the partial differential operator in $x$ acing on 
$\RR^3$-valued functions, 
$(\cdot,\cdot)$ is the inner product in $\RR^3$, 
and $\wedge$ is the exterior product in $\RR^3$. 
In particular, the $\mathbb{S}^2$-valued model 
\eqref{eq:pdes2} with 
$3a-2b+c=0$ and $\lambda=1$
models the continuum limit of the Heisenberg 
spin chain systems 
with biquadratic exchange interactions(\cite{LPD}), 
where each of $a,b,c$ is decided 
by two independent physical constants. 
Interestingly, 
the same equation can be derived from an equation 
modelling the motion of a 
vortex filament in an incompressible perfect fluid in $\RR^3$
by taking into account of the elliptical deformation effect of the core
due to the self-induced strain 
(\cite{fukumoto, FM}).
\par
For the Schr\"odinger flow equation \eqref{eq:SM} and the higher-dimensional generalization, 
almost all results on the existence of solutions have been established     
assuming essentially that $(N,J,g)$ is 
a compact K\"ahler manifold.
See, e,g, \cite{CSU, KLPST, koiso, McGahagan, NSVZ, SSB} and references therein. 
Under the assumption, the classical energy method 
combined with geometric analysis works to show the local existence results. 
On the other hand, if  $(N,J,g)$ is a compact almost hermitian manifold 
without the K\"ahler condition, 
the classical energy method breaks down, since the so-called loss of derivatives 
occurs from the covariant derivative of the almost complex structure. 
However, Chihara in \cite{chihara} overcame the difficulty by the geometric energy method 
combined with a kind of the gauge transformation acting on the pull-buck bundle. 
Indeed, he established a local existence and uniqueness result for maps from a compact Riemannian manifold into a  
compact almost hermitian manifold. 
After that, he and the author obtained similar results in \cite{CO, onodera1, onodera3, onodera2} 
for a third-order dispersive flow equation 
for maps from $\RR$ or $\TT$ into a compact almost hermitian manifold.    
\par 
In contrast, for our fourth-order 
dispersive flow equation \eqref{eq:pde}, 
we face with the difficulty due to loss of derivatives even if 
$(N,J,g)$ satisfies the K\"ahler condition, 
which is also the case for the $\mathbb{S}^2$-valued physical model \eqref{eq:pdes2}. 
If the spacial domain is the real line $\RR$ 
instead of $\TT$, 
the difficulty can be overcome 
by making use of the local dispersive smoothing effect of the equation  
in some sense. 
Besides, there is much room for the solvable structure. 
Indeed,  in \cite{CO2}, 
the local existence and the uniqueness of a solution 
to the problem on $\RR$ 
were established and were extended to 
compact K\"ahler manifolds as $N$. 
Unfortunately, however, 
the local smoothing effect is absent in our problem 
since the spacial domain $\TT$ is compact. 
In other words, the method of the proof in \cite{CO2} 
is not applicable to our problem. 
Thus the obstruction coming from the loss of derivatives  
is expected to be avoided by finding out a kind of 
special nice solvable structure of the equation. 
\par 
The previous studies of \eqref{eq:pde} on $\TT$ 
are limited as follows: 
Guo, Zeng, and Su in \cite{GZS} investigated the $\mathbb{S}^2$-valued 
physical model \eqref{eq:pdes2} 
with $3a-2b+c=0$ and $\lambda=1$ imposing 
an additional assumption $c=0$. 
Under the assumption, 
\eqref{eq:pdes2} 
is completely integrable, 
and 
they made use of some conservation laws of \eqref{eq:pdes2} 
to show the local existence of a weak solution to 
the initial value problem, 
though the uniqueness was unsolved. 
Chihara in \cite{chihara2} investigated 
fourth-order dispersive systems for $\mathbb{C}^2$-valued 
functions including a system which is 
reduced from \eqref{eq:pde} by the generalized Hasimoto transformation, 
and pointed out that the assumption that the sectional curvature of $N$ 
is constant provides the solvable structure of the initial value problem. 
To the present author's knowledge, 
though the insights seems to grasp the solvable structure
of \eqref{eq:pde}-\eqref{eq:data} essentially, 
it is nontrivial whether we can recover the solution to 
\eqref{eq:pde}-\eqref{eq:data}  
from the solution to the reduced dispersive system. 
\par 
Motivated by them, 
the present author tried to solve directly \eqref{eq:pde}-\eqref{eq:data}
imposing that the sectional curvature on $N$ is constant,  
without using the generalized Hasimoto transformation. 
Recently, he in \cite{onodera2} succeeded to 
show the local existence of a unique solution 
to the initial value
problem for the $\mathbb{S}^2$-valued model \eqref{eq:pdes2} 
without any assumption on $a,b,c,\lambda$ (except for $a\ne 0$), 
where $u_0$ is taken so that 
$u_{0x}\in H^k(\TT;\RR^3)$ with $k\geqslant 6$.  
This is proved by the energy method based on the standard 
Sobolev norm for $\RR^3$-valued functions,  
combined with a kind of gauge transformation. 
\par
The purpose of the present paper is to extend 
the results obtained in 
\cite{onodera2} for $\mathbb{S}^2$-valued model \eqref{eq:pdes2}, 
that is, to establish the 
time-local existence and uniqueness theorem for  
\eqref{eq:pde}-\eqref{eq:data} under the assumption that 
$k\geqslant 6$ and the sectional curvature on $(N,g)$ is constant.  
More precisely, our main results is stated as follows:  
\begin{theorem}
\label{theorem:uniqueness}
Suppose that $(N,J,g)$ is a compact Riemann surface whose sectional curvature is constant.
Let $k$ be an integer satisfying $k\geqslant 6$. 
Then for any 
$u_0\in C(\mathbb{T};N)$ satisfying 
$u_{0x}\in H^k(\TT;TN)$, 
there exists $T=T(\|u_{0x}\|_{H^4(\TT;TN)})>0$
such that
\eqref{eq:pde}-\eqref{eq:data} 
has a unique solution 
$u\in C([-T,T]\times \TT;N)$
satisfying 
$u_x\in 
C([-T,T];H^{k}(\TT;TN)).
$
\end{theorem}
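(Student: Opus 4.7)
The plan is to use the geometric energy method on the pull-back bundle $u^{-1}TN$, combined with a gauge transformation adapted to the constant sectional curvature of $N$. Since any Riemann surface is automatically K\"ahler, $\nabla J = 0$, which removes the classical source of loss of derivatives present in the almost hermitian case; nevertheless, curvature commutators $[\nabla_t,\nabla_x]\cdot = R(u_t,u_x)\cdot$ and the differentiation of the coefficient $g(u_x,u_x)$ in \eqref{eq:pde} still produce terms with too many $x$-derivatives when one tries to close a naive $H^k$ energy estimate. I would first construct approximate solutions $u^\ep$ by adding a parabolic regularization $-\ep\nabla_x^4 u_x$ to \eqref{eq:pde} (with the appropriate sign) for which short-time existence of smooth solutions on a compact target is classical, and then establish uniform-in-$\ep$ bounds on $\|u^\ep_x\|_{H^k(\TT;TN)}$ over an interval depending only on $\|u_{0x}\|_{H^4(\TT;TN)}$, as required by the statement.

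The heart of the argument is the top-order energy estimate. Differentiating \eqref{eq:pde} $k$ times with $\nabla_x$, commuting $\nabla_t$ past $\nabla_x^k$, and pairing against $\nabla_x^k u_x$ with $g$, one formally obtains
\begin{equation*}
\tfrac{1}{2}\tfrac{d}{dt}\|\nabla_x^k u_x\|_{L^2}^2
 = \int_{\TT} g\bigl(a J_u \nabla_x^{k+3} u_x + R_k,\; \nabla_x^k u_x\bigr)\,dx,
\end{equation*}
where the principal term vanishes after integration by parts by the skew-symmetry of $J_u$ with respect to $g$, but the remainder $R_k$ contains terms of the form $g(u_x,u_x)\,J_u\nabla_x^{k+2}u_x$ and $g(\nabla_x u_x,u_x)\,J_u\nabla_x^{k+1}u_x$ that cost one derivative beyond what the bare energy controls. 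The constant sectional curvature assumption is decisive here: since
\begin{equation*}
R(X,Y)Z = K\{\,g(Y,Z)X - g(X,Z)Y\,\},
\end{equation*}
every bad contribution to $R_k$ is a polynomial in $g(u_x,u_x)$, $g(\nabla_x u_x, u_x)$ and $K$ acting on $J_u\nabla_x^{k+1}u_x$, $\nabla_x^{k+1}u_x$, or the analogous $(k+2)$-th order quantities. Following the gauge construction of \cite{CO2, onodera2} and the suggestion of \cite{chihara2}, I would introduce a corrected energy
\begin{equation*}
\widetilde{E}_k(t) = \tfrac{1}{2}\int_{\TT} g\bigl(\nabla_x^k u_x + \Phi_k,\; \nabla_x^k u_x + \Phi_k\bigr)\,dx,
\end{equation*}
with $\Phi_k$ built from $\nabla_x^j u_x$ for $j\leqslant k-1$ and $J_u$, with scalar coefficients depending polynomially on $g(u_x,u_x)$ and on $K$, chosen so that $\frac{d}{dt}\widetilde{E}_k$ contains no term involving more than $k$ covariant derivatives of $u_x$. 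A Gronwall-type inequality then yields the uniform bound, and an Aubin-Lions compactness argument upon passing $\ep\to 0$ produces a solution with $u_x\in C([-T,T];H^k(\TT;TN))$.

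The step I expect to be the main obstacle is the uniqueness assertion, since two solutions $u,\tilde u$ take values in the curved space $N$ and cannot be subtracted as vectors. Following the geometric approach of McGahagan and of \cite{CO2, onodera2, onodera3}, I would introduce the parallel transport $P$ along the short geodesic joining $\tilde u(t,x)$ to $u(t,x)$ and measure the discrepancy by the pair
\begin{equation*}
V(t,x) = \exp_{\tilde u(t,x)}^{-1} u(t,x), \qquad W(t,x) = u_x(t,x) - P\,\tilde u_x(t,x),
\end{equation*}
viewed as sections of a common bundle. Deriving the equations for $V$ and $W$ yields a fourth-order dispersive system whose nonlinear remainders, thanks once more to the explicit form of $R$, can be controlled by a gauge correction analogous to $\Phi_k$ at the level of the difference. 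The resulting energy estimate for $(V,W)$ closes provided one can afford two extra layers of integration by parts against the corrected test section; this is where the hypothesis $k\geqslant 6$ enters, as it furnishes enough Sobolev regularity of $u$ and $\tilde u$ to absorb all lower-order commutators produced by the gauge and to make the parallel transport $P$ itself sufficiently smooth in $(t,x)$. The final step is the standard conclusion $V\equiv 0$, $W\equiv 0$ on the existence interval.
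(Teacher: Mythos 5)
Your existence argument coincides with the paper's: parabolic regularization, the gauged quantity $V_k=\nabla_x^ku_x+\Lambda$ with $\Lambda$ quadratic in $u_x$ acting on $\nabla_x^{k-2}u_x$ (see \eqref{eq:igauge} and \eqref{eq:V_m}), cancellation of the two surviving loss-of-derivative terms by the commutator of $a\,J_u\nabla_x^4$ with the gauge, Gronwall, and compactness; your diagnosis of where the loss originates (the curvature commutator in its constant-curvature form plus differentiation of the cubic coefficients) and of why $T$ depends only on $\|u_{0x}\|_{H^4}$ also matches.

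For uniqueness you take a genuinely different route. The paper does not use parallel transport along connecting geodesics: it fixes an isometric embedding $w:N\to\RR^d$, sets $Z=w{\circ}u-w{\circ}v$ and, crucially, $\WW=dw_u(\nabla_xu_x)-dw_v(\nabla_xv_x)$ rather than $\p_xZ_x$ (this choice is what makes Lemma~\ref{lemma:nu} work, since components along the normal frame $\nu_k(U)$ then cost only $\mathcal{O}(|Z|)$), and closes the estimate for $\|Z\|_{L^2}^2+\|Z_x\|_{L^2}^2+\|\TW\|_{L^2}^2$ with $\TW=\WW+\widetilde{\Lambda}$ the extrinsic analogue of the existence gauge, the coefficients being fixed in \eqref{eq:e1e2} to kill exactly the two bad terms left in \eqref{eq:WWt}. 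Your McGahagan-type pair $V=\exp_{\tilde u}^{-1}u$, $W=u_x-P\tilde u_x$ is intrinsic and would avoid the second-fundamental-form bookkeeping that occupies most of Section~\ref{section:proof}, which is a real advantage if it can be made to work. But the step you flag as the expected obstacle is precisely the one left undone: for a fourth-order equation the difference system contains $\nabla_x^3W$, the commutators of $\nabla_x$ with the transport $P$ (whose $x$-derivatives along the varying geodesics are controlled only through curvature and derivatives of $u,\tilde u$) generate second- and first-order terms in $W$ with coefficients built from $u_x$ and $\nabla_xu_x$, and you must exhibit a gauge correction in that intrinsic framework cancelling exactly the analogues of $\lr{(\p_x^2\WW,U_x)J(U)U_x,\WW}$ and $\lr{(\p_xU_x,U_x)J(U)\p_x\WW,\WW}$ while checking that $\nabla_x^jP$ introduces no new loss. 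That computation is the actual content of the uniqueness proof; in your proposal it is asserted rather than performed, whereas the paper's embedding calculation is the worked substitute for it. So treat your uniqueness section as a plausible alternative program, not yet a proof.
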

\underline{Notation.} \  
For $\phi:\TT\to N$, 
we denote by $\Gamma(\phi^{-1}TN)$ the set of all vector fields along $\phi$. 
Let $V\in \Gamma(\phi^{-1}TN)$ and let $m$ be nonnegative integer. 
Then we say $V\in H^m(\TT;TN)$ if 
$$
\|V\|_{H^m(\TT;TN)}
:=
\sum_{\ell=0}^m
\int_{\TT}
g(\nabla_x^{\ell}V(x), \nabla_x^{\ell}V(x))\,dx
<\infty.
$$
In particular, if $m=0$, we replace $H^0(\TT;TN)$ 
with $L^2(\TT;TN)$. 
\begin{remark}
Precisely speaking, the existence time $T$ of the solution in 
Theorem~\ref{theorem:uniqueness}
depends on $a,b,c,\lambda$, and the constant sectional curvature of $(N,g)$ 
as well as $\|u_{0x}\|_{H^4(\TT;TN)}$. 
\end{remark}
\begin{remark}
The local existence of the solution  in 
Theorem~\ref{theorem:uniqueness} holds if $k\geqslant 4$. 
The assumption $k\geqslant 6$ comes from  the requirement 
to show the uniqueness.
\end{remark}
\begin{remark} 
Let $w$ be an isometric embedding of $(N,g)$ into 
some Euclidean space $\RR^d$ so that $N$ is considered as a submanifold 
of $\RR^d$. By the Gagliardo-Nirenberg inequality, 
it is found for $u_0$ in Theorem~\ref{theorem:uniqueness} 
that $u_{0x}\in H^k(\TT;TN)$ if and only if  
$(w{\circ}u_0)_x\in H^k(\TT;\RR^d)$, 
where $H^k(\TT;\RR^d)$ denotes the standard $k$-th order 
Sobolev space for $\RR^d$-valued functions on $\TT$. 
By the equivalence, Theorem~\ref{theorem:uniqueness}
actually extends the results obtained in \cite{onodera2}. 
\end{remark} 
\begin{remark}
We can extend Theorem~\ref{theorem:uniqueness} to the case 
where $(N,J,g)$ is a compact K\"ahler manifold with non-zero constant sectional curvature.  
Indeed, 
the argument using \eqref{eq:2d} and \eqref{eq:k1} in the proof 
can be replaced by that using \eqref{eq:constsec} if the curvature is not zero. 
This seems a little bit artificial and the proof is not so different. 
Thus we do not pursue that. 
\end{remark}
\begin{remark}
It is unlikely that we can remove the assumption 
on the curvature of $(N,g)$ in general.   
To see this, let $(N,g)$ be a Riemann surface whose sectional curvature 
is not necessarily constant.  
In view of \cite[Section~4]{chihara2}, 
if we can construct a sufficiently smooth 
solution $u$ to \eqref{eq:pde}-\eqref{eq:data}, 
the following necessary condition 
\begin{equation}
\int_{\TT}
\frac{\p}{\p x}
\left\{
S(u(t,x))
\right\}
g(u_x(t,x),u_x(t,x))
\,dx
=0
\label{eq:structure11}
\end{equation}
is expected to be satisfied for all existence time, 
where 
$S(u(t,x))$ denotes the sectional curvature of $(N,g)$
at $u(t,x)\in N$. 
This requires at least that the left hand side of \eqref{eq:structure11} 
is a conserved quantity in time.  
Even if \eqref{eq:structure11} is true, the initial map $u_0$ is 
required to satisfy 
\begin{equation}
\int_{\TT}
\frac{\p}{\p x}
\left\{
S(u_0(x))
\right\}
g(u_{0x}(x),u_{0x}(x))
\,dx
=0.
\label{eq:structure22}
\end{equation}
On the other hand,
\eqref{eq:structure11} and \eqref{eq:structure22} are 
obviously satisfied if 
the sectional curvature of $(N,g)$ is constant. 
\end{remark}
\par
The idea of the proof of the local existence 
comes from the following formal observation.  
Suppose that $u$ solves
\eqref{eq:pde}-\eqref{eq:data}.
If $k\geqslant 4$, $\nabla_x^ku_x$ satisfies 
\begin{align}
(\nabla_t-a\,J_u\nabla_x^4-c_1\,P_1\nabla_x^2-c_2\,P_2\nabla_x)
\nabla_x^ku_x
&=
\mathcal{O}
\left(
\sum_{m=0}^{k+2}
|\nabla_x^mu_x|_g
\right)
\label{eq:esspde}
\end{align}
where $|\cdot|_g=\left\{g(\cdot,\cdot)\right\}^{1/2}$, 
$c_1$ and $c_2$ are real constants depending on 
$a,b,c,k$ and the sectional curvature on $(N,g)$, 
and $P_1$ and $P_2$ are defined by 
\begin{align}
P_1Y
&=
g(Y,u_x)J_uu_x, 
\quad
P_2Y
=
g(\nabla_xu_x,u_x)J_uY
\nonumber
\end{align}
for any $Y\in \Gamma(u^{-1}TN)$. 
It is found that 
\eqref{eq:esspde} leads to 
the classical energy estimate 
for $\|\nabla_x^ku_x\|_{L^2(\TT;TN)}^2$ 
with loss of derivatives coming only from 
$c_1\,P_1\nabla_x^2$ and $c_2\,P_2\nabla_x$. 
Though the right hand side of \eqref{eq:esspde} includes 
$\nabla_x^2(\nabla_x^ku_x)$ and $\nabla_x(\nabla_x^ku_x)$, 
no loss of derivatives occur thanks to 
the curvature condition and 
the K\"ahler condition on $(N,J,g)$.   
To eliminate the loss of derivatives coming from 
$c_1\,P_1\nabla_x^2$ and $c_2\,P_2\nabla_x$, 
we introduce the so-called gauged function $V_k$ defined by 
\begin{align}
V_k
&=
\nabla_x^ku_x
-\frac{d_1}{2a}\,
g(\nabla_x^{k-2}u_x,J_uu_x)J_uu_x
+
\frac{d_2}{8a}\,
g(u_x,u_x)\nabla_x^{k-2}u_x, 
\label{eq:igauge}
\end{align}
where 
$d_1$ and $d_2$ are constants decided later. 
Here $V_k$ is formally expressed by  
$V_k=(I_d+\Phi_1\nabla_x^{-2}+\Phi_2\nabla_x^{-2})\nabla_x^ku_x$, 
where $I_d$ is the identity on $\Gamma(u^{-1}TN)$ and  
\begin{align}
\Phi_1Y
&=
-\frac{d_1}{2a}\,
g(Y,J_uu_x)J_uu_x, 
\quad 
\Phi_2
=
\frac{d_2}{8a}\,
g(u_x,u_x)Y 
\nonumber
\end{align}
for any $Y\in \Gamma(u^{-1}TN)$. 
Noting that 
$J_u$ commutes with $\Phi_2$  and not with $\Phi_1$, 
we see 
\begin{align}
\left[
a\,J_u\nabla_x^4, \Phi_1\nabla_x^{-2}
\right]\nabla_x^ku_x
&=
(d_1\,P_1\nabla_x^2-d_1\,P_2\nabla_x)\nabla_x^ku_x
+\text{harmless terms},
\label{eq:obs1}
\\
\left[
a\,J_u\nabla_x^4, \Phi_2\nabla_x^{-2}
\right]\nabla_x^ku_x
&=
d_2\,P_2\nabla_x\nabla_x^ku_x
+
\text{harmless terms}.
\label{eq:obs2}
\end{align}
Therefore, if we set $d_1=c_1$ and $d_2=c_1+c_2$, 
the above two commutators eliminate 
$c_1\,P_1\nabla_x^2+c_2\,P_2\nabla_x$ in the partial 
differential equation satisfied by $V_k$, 
and hence the energy estimate for  
$\|V_k\|_{L^2(\TT;TN)}^2$ works. 
The nice choice of the above gauged function 
is inspired by \cite{chihara2}. 
\par 
The strategy for the proof of the local existence of a solution 
is as follows: 
First, we construct a family 
of fourth-order parabolic regularized solutions  
$\left\{u^{\ep}\right\}_{\ep\in (0,1]}$. 
Second, we obtain $\ep$-independent uniform estimates for  
$\|u_x^{\ep}\|_{H^{k-1}(\TT;TN)}^2+\|V_k^{\ep}\|_{L^2(\TT;TN)}^2$
and the lower bound $T>0$ of 
existence time of $\left\{u^{\ep}\right\}_{\ep\in (0,1]}$, 
where $V_k^{\ep}$ is defined by \eqref{eq:igauge} 
replacing $u$ with $u^{\ep}$. 
Finally, the standard compactness argument concludes 
the existence of  
$u\in C([0,T]\times \TT;N)$ so that 
$u_x\in L^{\infty}(0,T;H^k(\TT;TN))\cap 
C([0,T];H^{k-1}(\TT;TN))$ and $u$ solves
\eqref{eq:pde}-\eqref{eq:data}. 
The two commutators \eqref{eq:obs1} and \eqref{eq:obs2} 
in the above formal observation 
will be generated essentially in the computation of the second 
and the third term of the right hand side of 
\eqref{eq:TW1}. 
One can refer to 
\cite{KLPST, koiso, McGahagan}
for tools of computation 
and 
\cite{chihara2, CO, CO2}
for the method of the gauged energy 
employed in the proof. 
\par
The strategy for the proof of the uniqueness of the solution
is stated as follows:  
Suppose that 
$u, v\in C([0,T]\times \TT;N)$ 
are solutions to \eqref{eq:pde}-\eqref{eq:data} 
satisfying 
$u_x, v_x\in 
L^{\infty}(0,T;H^6(\TT;TN))
\cap
C([0,T];H^{5}(\TT;TN))
$ 
with same initial data $u_0$. 
Their existence is ensured by the above local existence results. 
To estimate the difference between $u$ and $v$, 
we regard $u$ and $v$ as functions 
with values in some Euclidean space $\RR^d$. 
Indeed, letting
$w$ be an isometric embedding of $(N,g)$ into $\RR^d$,  
we consider $\RR^d$-valued functions 
defined as follows: 
\begin{align}
U&:=w{\circ} u, 
\quad 
V:=w{\circ} v, 
\quad
Z:=U-V, 
\nonumber
\\
\mathcal{U}&:=dw_u(\nabla_xu_x), 
\quad
\mathcal{V}:=dw_v(\nabla_xv_x), 
\quad
\WW:=\UU-\VV, 
\nonumber
\end{align} 
where $dw_p:T_pN\to T_{w{\circ}p}\RR^d\cong \RR^d$ 
is the differential of $w$ at $p\in N$. 
To complete the proof of the uniqueness,  
it suffices to show $Z=0$. 
First, as shown in \eqref{eq:WWt}, 
we obtain the classical energy estimate 
for $\|Z\|_{L^2}^2+\|Z_x\|_{L^2}^2+\|\WW\|_{L^2}^2$ 
with the loss of derivatives, 
where $\|\cdot\|_{L^2}$ expresses the standard 
$L^2$-norm for $\RR^d$-valued functions
on $\TT$. 
The loss of derivatives has similar form as 
that eliminated by the method of the gauge transformation 
in the proof of the local existence of a solution. 
Observing the analogy, we can easily find $\TW=\WW+\widetilde{\Lambda}$ 
as a gauged function of $\WW$ so that 
the energy estimate for 
$\|Z\|_{L^2}^2+\|Z_x\|_{L^2}^2+\|\TW\|_{L^2}^2$
can be closed. This shows $Z=0$. 
The precise form of $\widetilde{\Lambda}$ will be given in 
\eqref{eq:e1e2}. 
\par
In the proof of the uniqueness, 
we face with another difficulty, 
which does not appear in the proof of the local existence. 
On one hand, 
the proof of the local existence seems clear, 
thanks to the nice matching between 
the geometric formulation of \eqref{eq:pde} 
and the geometric $L^2$-norm $\|\cdot\|_{L^2(\TT;TN)}$. 
On the other hand, the proof of the uniqueness 
requires lengthier computations, 
due to the worse matching between the form of the 
equation satisfied by $U$ and 
the standard $L^2$-norm $\|\cdot\|_{L^2(\TT;\RR^d)}$.
More concretely, the most crucial part of the proof of the uniqueness
is how to derive the energy estimate for $\WW$ 
of the form \eqref{eq:WWt}. 
To derive this, the partial differential equation 
satisfied by $\mathcal{W}$ 
and the energy estimate in $L^2(\TT;\RR^d)$ 
are required. 
However, the analysis of the structure of lower order terms 
in the equation becomes complicated, 
since many terms related to 
the second fundamental form on $N$ and the derivatives
appear to describe the equation satisfied by $U$ or $V$. 
As \eqref{eq:pde} is higher-order equation than 
the Schr\"odinger flow equation or the third-order dispersive flow 
equation previously studied, 
the situation becomes worse. 
Fortunately, however,   
we can successfully formulate the  K\"ahler condition 
and the curvature condition on $(N,J,g)$
to be applicable to our problem, 
and demonstrate 
that only weak loss of derivatives 
is allowed to appear in the energy estimate 
for $\|\WW\|_{L^2(\TT;\RR^d)}^2$. 
In addition, it is to be noted that 
we does not choose $\p_xZ_x$ but choose $\WW$  
in the energy estimate. 
The choice also plays an important role (See,e.g.,Lemma~\ref{lemma:nu}) 
in our proof, 
as well as the choice of $\widetilde{\Lambda}$.   
\par 
By the way, 
the geometric formulation of \eqref{eq:pde} was originally 
proposed by \cite{onodera0}. 
Independently, Anco and Myrzakulov in \cite{AM} 
derived the equation, named a fourth-order Schr\"odinger map equation,  
for $u:\RR\times \RR\to N$ 
or  $u:\RR\times \TT\to N$
of the form 
\begin{align}
-u_t
&=J_{u}\nabla_x^3u_x
+\frac{1}{2}
\nabla_x\left\{
g(u_x,u_x)J_{u}u_x
\right\}
-\frac{1}{2}
g(J_{u}u_x,\nabla_xu_x)u_x.
\label{eq:AM}
\end{align}
Interestingly, if $N$ is a Riemann surface, 
\eqref{eq:AM} is identical with \eqref{eq:pde} 
with $a=-1$, $b=-1$, $c=-1/2$, and $\lambda=0$. 
Therefore, we immediately find that 
Theorem~\ref{theorem:uniqueness}
is valid for
the initial value problem 
also for \eqref{eq:AM}. 
\par
The organization of the present paper is as follows:
In Section~\ref{section:existence}, 
a time-local solution to \eqref{eq:pde}-\eqref{eq:data} is constructed. 
In Section~\ref{section:proof}, 
the proof of Theorem~\ref{theorem:uniqueness} is completed. 
%%%%%%%%%%%%%%%
%%%%%%%%%%%%%%%%%%Section_1
%%%%%%%%%%%%%%%%%%%%%
%
\section{Proof of the existence of a time-local solution}
\label{section:existence}
This section is devoted to the construction of a time-local
solution to \eqref{eq:pde}-\eqref{eq:data}. 
More concretely, the goal of this section is to show the 
following. 
\begin{theorem}
\label{theorem:existence}
Suppose that the sectional curvature of $(N,g)$ is constant. 
Let $k$ be an integer satisfying $k\geqslant 4$. 
Then for any 
$u_0\in C(\mathbb{T};N)$ satisfying 
$u_{0x}\in H^k(\TT;TN)$, 
there exists $T=T(\|u_{0x}\|_{H^4(\TT;TN)})>0$
such that  
\eqref{eq:pde}-\eqref{eq:data} 
has a solution 
$u\in C([-T,T]\times \TT;N)$
satisfying 
$u_x\in 
L^{\infty}(-T,T;H^{k}(\TT;TN))
\cap
C([-T,T];H^{k-1}(\TT;TN)).
$
\end{theorem}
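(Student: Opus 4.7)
The plan is to follow the three-step strategy outlined in the introduction: parabolic regularization, $\ep$-uniform gauged energy estimates, and compactness.

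For each $\ep \in (0,1]$ I would consider the parabolic regularization obtained by appending a fourth-order dissipative term, such as $-\ep\,\nabla_x^4 u^{\ep}$ suitably projected so that the flow remains on $N$, to the right-hand side of \eqref{eq:pde}. The resulting quasilinear parabolic problem admits a short-time smooth solution $u^{\ep}$ by standard theory of parabolic equations on manifolds (after mollifying $u_0$ if necessary), with an $\ep$-dependent existence time.

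The core step is obtaining uniform-in-$\ep$ estimates. For orders $0 \leq \ell \leq k-1$, the geometric $L^2(\TT;TN)$-estimate on $\nabla_x^{\ell} u_x^{\ep}$ closes directly, using the K\"ahler identity $\nabla J = 0$ together with the constant sectional curvature hypothesis (which gives $\nabla R = 0$ and the algebraic form $R(X,Y)Z = S\,(g(Y,Z)X - g(X,Z)Y)$) to kill would-be lossy terms. At order $\ell = k$, the identity \eqref{eq:esspde} isolates the genuine loss of derivatives in $c_1\,P_1 \nabla_x^2 + c_2\,P_2 \nabla_x$ applied to $\nabla_x^k u_x$. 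I would then introduce $V_k^{\ep}$ via \eqref{eq:igauge} with $u$ replaced by $u^{\ep}$ and with the choice $d_1 = c_1$, $d_2 = c_1 + c_2$; the commutator identities \eqref{eq:obs1}--\eqref{eq:obs2} show that the resulting PDE for $V_k^{\ep}$ carries no loss of derivatives, the non-cancelling contributions assembling into harmless remainders controlled by lower-order Sobolev norms of $u_x^{\ep}$ together with $\|V_k^{\ep}\|_{L^2(\TT;TN)}^2$ itself. Setting $E^{\ep}(t) := \|u_x^{\ep}(t)\|_{H^{k-1}(\TT;TN)}^2 + \|V_k^{\ep}(t)\|_{L^2(\TT;TN)}^2$ and invoking the Sobolev embedding $H^3(\TT) \hookrightarrow L^{\infty}(\TT)$ on the nonlinearity, I expect a differential inequality $\frac{d}{dt} E^{\ep}(t) \leq C\,(1 + E^{\ep}(t))^p$ with constants depending only on $a, b, c, \lambda$, the sectional curvature, and $\|u_{0x}\|_{H^4(\TT;TN)}$, yielding a common lower bound $T > 0$ on the existence time independent of $\ep$. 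Since $V_k^{\ep} - \nabla_x^k u_x^{\ep}$ involves only derivatives of order at most $k-2$, the quantity $E^{\ep}$ is equivalent to $\|u_x^{\ep}\|_{H^k(\TT;TN)}^2$, and standard compactness (Banach--Alaoglu plus Aubin--Lions) produces a subsequential limit $u \in C([-T,T] \times \TT; N)$ with $u_x \in L^{\infty}(-T,T; H^k(\TT;TN)) \cap C([-T,T]; H^{k-1}(\TT;TN))$ solving \eqref{eq:pde}--\eqref{eq:data}.

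The hard part will be the verification of the commutator identities \eqref{eq:obs1}--\eqref{eq:obs2} and the certification that every non-cancelling term in the equation satisfied by $V_k^{\ep}$ is genuinely harmless, that is, admits an $L^2(\TT;TN)$-bound in terms of lower-order Sobolev norms of $u_x^{\ep}$. This demands careful bookkeeping while repeatedly commuting $\nabla_x$ past $J_u$ (free under K\"ahler) and past the curvature tensor, and it is precisely the constant-curvature hypothesis that is indispensable here: the condition $\nabla R = 0$ prevents derivatives from landing on $R$, while the algebraic form of $R$ collapses otherwise potentially lossy curvature expressions into harmless polynomial combinations of $g(u_x,\cdot)$, $J_u u_x$, and lower covariant derivatives.
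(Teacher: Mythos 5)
Your proposal is correct and follows essentially the same route as the paper: fourth-order parabolic regularization, the gauged quantity $V_k^{\ep}$ from \eqref{eq:igauge} with exactly the choice $d_1=c_1$, $d_2=c_1+c_2$, a uniform bound on $\|u_x^{\ep}\|_{H^{k-1}}^2+\|V_k^{\ep}\|_{L^2}^2$ on a time interval cut off by the condition $N_4(u^{\ep}(t))\leqslant 2N_4(u_0)$, and a compactness passage to the limit (with smooth approximation of $u_0$). The "hard part" you single out — verifying the commutator cancellations and the harmlessness of the remainders via the K\"ahler identity, the constant-curvature form of $R$, and the two-dimensionality identity \eqref{eq:2d} — is precisely the content of the bulk of the paper's Section~2.
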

\begin{proof}[Proof of Theorem~\ref{theorem:existence}]
Let $k\geqslant 4$ be fixed. 
It suffices to solve the problem in the positive direction in time. 
We first assume that $u_{0}\in C^{\infty}(\TT;N)$ and construct a 
local solution. 
\par  
As a beginning,  
we consider the initial value problem of the form 
\begin{alignat}{2}
 & u_t
  =
 (-\ep + a\,J_u)\nabla_x^3u_x
  \nonumber 
  \\
  &\quad \quad
  +
  b\, g(u_x,u_x)J_u\nabla_xu_x
  +
  c\,g(\nabla_xu_x,u_x)J_uu_x
  +\lambda\, J_u\nabla_xu_x
 \quad &\text{in}\quad
 &(0,\infty){\times} \mathbb{T},
\label{eq:eppde}
\\
& u(0,x)
  =
  u_0(x)
\quad&\text{in}\quad &\mathbb{T},
\label{eq:epdata}
\end{alignat}
where $\ep\in (0,1]$ is a small positive parameter. 
Thanks to the added term 
$-\ep\,\nabla_x^3u_x$, 
\eqref{eq:eppde} is a fourth-order 
quasilinear parabolic system, 
and \eqref{eq:eppde}-\eqref{eq:epdata} has a unique local smooth
solution which we will denote $u^{\ep}$.   
\begin{lemma}
\label{lemma:parabolic}  
For each $\ep\in (0,1]$, 
there exists a positive constant $T_{\ep}$ 
depending on $\ep$ and $\|u_{0x}\|_{H^4(\TT;TN)}$ 
such that  
\eqref{eq:eppde}-\eqref{eq:epdata} possesses a 
unique solution $u^{\ep}\in C^{\infty}([0,T_{\ep}]\times \TT;N)$. 
\end{lemma}
We can show Lemma~\ref{lemma:parabolic} by the mix of 
a sixth-order parabolic regularization and 
a geometric classical energy method 
without the constant curvature condition on $(N,g)$.
The proof
almost falls into the scope 
of that of \cite[Lemma~3.1]{CO2} by replacing $\RR$ with $\TT$ 
and by restricting to a compact Riemann surface as $N$. 
Thought a slight modification is required in the proof, 
the difference is not essential 
and thus we omit the detail of the proof. 
\par 
In the next step, 
letting $\left\{u^{\ep}\right\}_{\ep\in(0,1]}$ be a family of solutions 
to \eqref{eq:eppde}-\eqref{eq:epdata} constructed in 
Lemma~\ref{lemma:parabolic}, 
we obtain $\ep$-independent 
energy estimates for 
$\left\{u^{\ep}_x\right\}_{\ep\in(0,1]}$.
Precisely speaking, we obtain a uniform lower bound 
$T$ of $\left\{T_{\ep}\right\}_{\ep\in (0,1]}$ 
and show that $\left\{u^{\ep}_x\right\}_{\ep\in (0,1]}$ 
is bounded in $L^{\infty}(0,T;H^k(\TT;TN))$. 
However, the classical 
energy estimate for  $\|u^{\ep}_x\|_{H^k(\TT;TN)}$  
causes loss of derivatives. 
To overcome the difficulty, 
we introduce a gauged function $V^{\ep}_k$ defined by 
\begin{align}
V^{\ep}_k
&=
\nabla_x^ku_x^{\ep}
+
\Lambda^{\ep}
=
\nabla_x^ku_x^{\ep}
+
\Lambda^{\ep}_1
+
\Lambda^{\ep}_2,
\label{eq:V_m}
\end{align}
where 
\begin{align}
\Lambda_1^{\ep}
&=
-\frac{d_1}{2a}\,
g(\nabla_x^{k-2}u_x^{\ep},J_uu_x^{\ep})J_uu_x^{\ep}, 
\quad 
\Lambda_2^{\ep}
=
\frac{d_2}{8a}\,
g(u_x^{\ep},u_x^{\ep})\nabla_x^{k-2}u_x^{\ep}, 
\nonumber
\end{align}
and $d_1, d_2\in\RR$ are real constants which will be decided later 
depending only on $a,b,c,k$ and the constant sectional curvature 
of $(N,g)$.  
Furthermore, we introduce the associated gauged energy 
$N_k(u^{\ep}(t))$ defined by  
\begin{equation}
N_k(u^{\ep}(t))
=
\sqrt{
\|u_x^{\ep}(t)\|_{H^{k-1}(\TT;TN)}^2
+
\|V_k^{\ep}(t)\|_{L^2(\TT;TN)}^2
}.
\label{eq:N_k}
\end{equation}
We restrict the time interval on $[0,T_{\ep}^{\star}]$ 
with $T^{\star}_{\ep}$ defined by 
$$
T^{\star}_{\ep}
=
\sup
\left\{
T>0 \ | \ 
N_4(u^{\ep}(t))\leqslant 2N_4(u_0)
\quad 
\text{for all}
\quad
t\in[0,T]
\right\}. 
$$
By the Sobolev embedding, we immediately find 
that there holds
\begin{equation}
\frac{1}{C}N_k(u^{\ep}(t))
\leqslant 
\|u_x^{\ep}(t)\|_{H^k(\TT;TN)}
\leqslant 
C\,N_k(u^{\ep}(t))
\quad \
\text{for any}
\quad \
t\in [0,T_{\ep}^{\star}], 
\label{eq:timecut}
\end{equation}
with $C=C(\|u_{0x}\|_{H^4(\TT;TN)})>1$ 
being an $\ep$-independent constant. 
We shall show that there exists a constant 
$T=T(\|u_{0x}\|_{H^4(\TT;TN)})>0$ 
which is independent of $\ep\in (0,1]$ and $k$
such that $T^{\star}_{\ep}\geqslant T$ uniformly in $\ep\in (0,1]$ 
and that $\left\{N_k(u^{\ep})\right\}_{\ep\in (0,1]}$ 
is bounded in $L^{\infty}(0,T)$. 
If it is true, this together with \eqref{eq:timecut} 
implies that $\left\{u_x^{\ep}\right\}_{\ep\in (0,1]}$ is 
bounded in $L^{\infty}(0,T;H^k(\TT;TN))$. 
\par 
Having them in mind, 
let us focus on the uniform energy 
estimate for $\left\{N_k(u^{\ep})\right\}_{\ep\in (0,1]}$.  
We set $u=u^{\ep}$, $V_k=V_k^{\ep}$, 
$\Lambda=\Lambda^{\ep}$, 
$\Lambda_1=\Lambda_1^{\ep}$, $\Lambda_2=\Lambda_2^{\ep}$,
$\|\cdot\|_{H^0(\TT;TN)}=\|\cdot\|_{L^2(\TT;TN)}=\|\cdot\|_{L^2}$, 
$\|\cdot\|_{H^m(\TT;TN)}=\|\cdot\|_{H^m}$ for $m=1,\ldots,k$, 
and 
$\sqrt{g(\cdot,\cdot)}=|\cdot|_g$,  
for ease of notation. 
Since $g$ is a hermitian metric, 
$g(J_uY_1,J_uY_2)=g(Y_1,Y_2)$ holds for any 
$Y_1,Y_2\in\Gamma(u^{-1}TN)$. 
Since Riemann surfaces with hermitian metric are K\"ahler manifolds, 
$\nabla_xJ_u=J_u\nabla_x$ and $\nabla_tJ_u=J_u\nabla_t$ hold.  
We denote the sectional curvature of $(N,g)$ by $S$ which is constant.
Any positive constant which depends on  
$a$, $b$, $c$, $\lambda$, $k$, $S$, 
$\|u_{0x}\|_{H^4}$ 
and not on $\ep\in (0,1]$ will be denoted by the same $C$. 
Note that $k\geqslant 4$ and the Sobolev embedding 
$H^1(\TT)\subset C(\TT)$
yield 
$\|\nabla_x^4u_x\|_{L^{\infty}(0,T_{\ep}^{\star};L^2)}\leqslant C$
and $\|\nabla_x^mu_x\|_{L^{\infty}((0,T_{\ep}^{\star})\times \TT)}\leqslant C$ 
for $m=0,1,\ldots,3$. 
These properties will be used without any comment in this section.
\par 
We now investigate the energy estimate for $\|V_k\|_{L^2}^2$.
It follows that  
\begin{align}
\frac{1}{2}\frac{d}{dt}
\|V_k\|_{L^2}^2
&=
\int_{\TT}
g(\nabla_tV_k,V_k)
dx
\nonumber
\\
&=
\int_{\TT}
g(\nabla_t(\nabla_x^ku_x),V_k)
dx
+
\int_{\TT}
g(\nabla_t\Lambda,V_k)
dx
\nonumber
\\
&=
\int_{\TT}
g(\nabla_t(\nabla_x^ku_x),\nabla_x^ku_x)
dx
+
\int_{\TT}
g(\nabla_t(\nabla_x^ku_x),\Lambda)
dx
+
\int_{\TT}
g(\nabla_t\Lambda,V_k)
dx.
\label{eq:eqV_k}
\end{align}
To evaluate the right hand side
(denoted by RHS hereafter for short) of \eqref{eq:eqV_k}, 
we compute the partial differential 
equation satisfied by $\nabla_x^ku_x$. 
Recalling that $\nabla_xu_t=\nabla_tu_x$ and 
$(\nabla_x\nabla_t-\nabla_t\nabla_x)Y=R(u_x,u_t)Y$ 
for any $Y\in \Gamma(u^{-1}TN)$ where 
$R=R(\cdot,\cdot)$ denotes the Riemann curvature tensor on $(N,g)$, 
we have
\begin{align}
\nabla_t(\nabla_x^ku_x)
&=
\nabla_x^{k+1}u_t
+
\sum_{m=0}^{k-1}
\nabla_x^{k-1-m}
\left\{
R(u_t,u_x)\nabla_x^mu_x
\right\}
=:
\nabla_x^{k+1}u_t
+Q
.
\label{eq:na1}
\end{align}
First, we use \eqref{eq:eppde} to compute the second term of the RHS of the
above, 
which becomes 
\begin{align}
%\sum_{m=0}^{k-1}
%\nabla_x^{k-1-m}
%\left\{
%R(u_t,u_x)\nabla_x^mu_x
%\right\}
Q&=
-\ep\, 
\sum_{m=0}^{k-1}\nabla_x^{k-1-m}
\left\{
R(\nabla_x^3u_x,u_x)\nabla_x^mu_x
\right\}
\nonumber
\\
&\quad 
+a\,
\sum_{m=0}^{k-1}\nabla_x^{k-1-m}
\left\{
R(J_u\nabla_x^3u_x,u_x)\nabla_x^mu_x
\right\}
\nonumber
\\
&\quad 
+\lambda\,
\sum_{m=0}^{k-1}\nabla_x^{k-1-m}
\left\{
R(J_u\nabla_xu_x,u_x)\nabla_x^mu_x
\right\}
\nonumber
\\
&\quad 
+b\,
\sum_{m=0}^{k-1}\nabla_x^{k-1-m}
\left\{
g(u_x,u_x)R(J_u\nabla_xu_x,u_x)\nabla_x^mu_x
\right\}
\nonumber
\\
&\quad 
+c\,
\sum_{m=0}^{k-1}\nabla_x^{k-1-m}
\left\{
g(\nabla_xu_x,u_x)R(J_uu_x,u_x)\nabla_x^mu_x
\right\}. 
\nonumber
\end{align}
Thus, by using the Sobolev embedding 
and the Gagliardo-Nirenberg inequality, 
we obtain 
\begin{align}
%&\sum_{m=0}^{k-1}
%\nabla_x^{k-1-m}
%\left\{
%R(u_t,u_x)\nabla_x^mu_x
%\right\}
%\nonumber
%\\
%&
Q&=
\ep\,
\mathcal{O}
(|\nabla_x^{k+2}u_x|_g)
+a\,Q_0
+
\mathcal{O}
\left(
\sum_{m=0}^k
|\nabla_x^mu_x|_g
\right), 
\label{eq:na2}
\end{align}
where 
$$
Q_0
=
\sum_{m=0}^{k-1}\nabla_x^{k-1-m}
\left\{
R(J_u\nabla_x^3u_x,u_x)\nabla_x^mu_x
\right\}.
$$ 
Since $S$ is the constant sectional curvature of $(N,g)$,  
\begin{equation}
R(Y_1,Y_2)Y_3
=S
\left\{
g(Y_2,Y_3)Y_1
-g(Y_1,Y_3)Y_2
\right\}
\label{eq:constsec}
\end{equation}
holds for any $Y_1,Y_2,Y_3\in \Gamma(u^{-1}TN)$.  
Using the formula, $Q_0$ is expressed as follows. 
\begin{align}
Q_0
&=
S\,\sum_{m=0}^{k-1}
\nabla_x^{k-1-m}
\left\{
g(\nabla_x^mu_x,u_x)J_u\nabla_x^3u_x
-g(\nabla_x^mu_x, J_u\nabla_x^3u_x)u_x
\right\}
\nonumber
\\
&=S\,(Q_{0,1}+Q_{0,2}+Q_{0,3}), 
\label{eq:Q0}
\end{align}
where 
\begin{align}
Q_{0,1}
&=
\nabla_x^{k-1}
\left\{
g(u_x,u_x)J_u\nabla_x^3u_x
-g(u_x,J_u\nabla_x^3u_x)u_x
\right\}, 
\nonumber
\\
Q_{0,2}
&=
\nabla_x^{k-2}
\left\{
g(\nabla_xu_x,u_x)J_u\nabla_x^3u_x
-g(\nabla_xu_x,J_u\nabla_x^3u_x)u_x
\right\}, 
\nonumber
\\
Q_{0,3}
&=
\sum_{m=2}^{k-1}
\nabla_x^{k-1-m}
\left\{
g(\nabla_x^mu_x,u_x)J_u\nabla_x^3u_x
-g(\nabla_x^mu_x, J_u\nabla_x^3u_x)u_x
\right\}. 
\nonumber
\end{align}
For $Q_{0,1}$, 
the product formula implies 
\begin{align}
Q_{0,1}
&=
\sum_{\mu+\nu=0}^{k-1}
\frac{(k-1)!}{\mu!\nu!(k-1-\mu-\nu)!}
\,
g(\nabla_x^{\mu}u_x, \nabla_x^{\nu}u_x)J_u\nabla_x^{k+2-\mu-\nu}u_x
\nonumber
\\
&\quad
-
\sum_{\mu+\nu=0}^{k-1}
\frac{(k-1)!}{\mu!\nu!(k-1-\mu-\nu)!}
\,
g(\nabla_x^{\mu}u_x, J_u\nabla_x^{\nu+3}u_x)\nabla_x^{k-1-\mu-\nu}u_x
\nonumber
\\
&=
g(u_x,u_x)J_u\nabla_x^{k+2}u_x
+
2(k-1)g(\nabla_xu_x,u_x)J_u\nabla_x^{k+1}u_x
-g(u_x,J_u\nabla_x^{k+2}u_x)u_x
\nonumber
\\
&\quad 
-(k-1)g(\nabla_xu_x,J_u\nabla_x^{k+1}u_x)u_x
-(k-1)g(u_x,J_u\nabla_x^{k+1}u_x)\nabla_xu_x
\nonumber
\\
&\quad 
+
\sum_{\mu+\nu=2}^{k-1}
\frac{(k-1)!}{\mu!\nu!(k-1-\mu-\nu)!}
\,
g(\nabla_x^{\mu}u_x, \nabla_x^{\nu}u_x)J_u\nabla_x^{k+2-\mu-\nu}u_x
\nonumber
\\
&\quad
-
\sum_{\substack{\mu+\nu=0,\\ \nu\leqslant k-3}}^{k-1}
\frac{(k-1)!}{\mu!\nu!(k-1-\mu-\nu)!}
\,
g(\nabla_x^{\mu}u_x, J_u\nabla_x^{\nu+3}u_x)\nabla_x^{k-1-\mu-\nu}u_x
\nonumber
\\
&=
g(u_x,u_x)J_u\nabla_x^{k+2}u_x
+
2(k-1)g(\nabla_xu_x,u_x)J_u\nabla_x^{k+1}u_x
\nonumber
\\
&\quad
-g(u_x,J_u\nabla_x^{k+2}u_x)u_x 
-(k-1)g(\nabla_xu_x,J_u\nabla_x^{k+1}u_x)u_x
\nonumber
\\
&\quad
-(k-1)g(u_x,J_u\nabla_x^{k+1}u_x)\nabla_xu_x 
+
\mathcal{O}
\left(
\sum_{m=0}^k
|\nabla_x^mu_x|_g
\right). 
\label{eq:na3}
\end{align}
Here it is to be emphasized that  
\begin{align}
g(Y, u_x)u_x+g(Y, J_uu_x)J_uu_x&=g(u_x,u_x)Y
\label{eq:2d}
\end{align}
holds for any $Y\in \Gamma(u^{-1}TN)$, 
since $N$ is a two-dimensional real manifold. 
Using \eqref{eq:2d} with $Y=J_u\nabla_x^{k+2}u_x$, we rewrite 
the third term of the RHS of 
\eqref{eq:na3}
to have 
\begin{align}
-g(u_x,J_u\nabla_x^{k+2}u_x)u_x
&=
-g(u_x,u_x)J_u\nabla_x^{k+2}u_x
+g(J_uu_x,J_u\nabla_x^{k+2}u_x)J_uu_x
\nonumber
\\
&=
-g(u_x,u_x)J_u\nabla_x^{k+2}u_x
+g(\nabla_x^{k+2}u_x,u_x)J_uu_x. 
\label{eq:na4}
\end{align}
Substituting \eqref{eq:na4} into the RHS of \eqref{eq:na3}, we obtain 
\begin{align}
Q_{0,1}
&=
2(k-1)g(\nabla_xu_x,u_x)J_u\nabla_x^{k+1}u_x
+g(\nabla_x^{k+2}u_x,u_x)J_uu_x
\nonumber
\\
&\quad 
+(k-1)g(\nabla_x^{k+1}u_x, J_u\nabla_xu_x)u_x
+(k-1)g(\nabla_x^{k+1}u_x, J_uu_x)\nabla_xu_x
\nonumber
\\
&\quad 
+
\mathcal{O}
\left(
\sum_{m=0}^k
|\nabla_x^mu_x|_g
\right). 
\label{eq:Q01}
\end{align}
For $Q_{0,2}$, in the same way as that for $Q_{0,1}$, 
we deduce 
\begin{align}
Q_{0,2}
&=
\sum_{\mu+\nu=0}^{k-2}
\frac{(k-2)!}
{\mu!\nu!(k-2-\mu-\nu)!}
\,
g(\nabla_x^{\mu+1}u_x, \nabla_x^{\nu}u_x)J_u\nabla_x^{k+1-\mu-\nu}u_x
\nonumber
\\
&\quad 
-\sum_{\mu+\nu=0}^{k-2}
\frac{(k-2)!}
{\mu!\nu!(k-2-\mu-\nu)!}
\,
g(\nabla_x^{\mu+1}u_x, J_u\nabla_x^{\nu+3}u_x)\nabla_x^{k-2-\mu-\nu}u_x
\nonumber
\\
&=
g(\nabla_xu_x,u_x)J_u\nabla_x^{k+1}u_x
-g(\nabla_xu_x, J_u\nabla_x^{k+1}u_x)u_x
\nonumber
\\
&\quad
+
\sum_{\mu+\nu=1}^{k-2}
\frac{(k-2)!}
{\mu!\nu!(k-2-\mu-\nu)!}
\,
g(\nabla_x^{\mu+1}u_x, \nabla_x^{\nu}u_x)J_u\nabla_x^{k+1-\mu-\nu}u_x
\nonumber
\\
&\quad 
-\sum_{\substack{\mu+\nu=0,\\ \nu\leqslant k-3}}^{k-2}
\frac{(k-2)!}
{\mu!\nu!(k-2-\mu-\nu)!}
\,
g(\nabla_x^{\mu+1}u_x, J_u\nabla_x^{\nu+3}u_x)\nabla_x^{k-2-\mu-\nu}u_x
\nonumber
\\
&=
g(\nabla_xu_x,u_x)J_u\nabla_x^{k+1}u_x
+g(\nabla_x^{k+1}u_x, J_u\nabla_xu_x)u_x
+
\mathcal{O}
\left(
\sum_{m=0}^k
|\nabla_x^mu_x|_g
\right). 
\label{eq:Q02}
\end{align}
For $Q_{0,3}$, the Sobolev embedding and 
the Gagliardo-Nirenberg inequality imply 
\begin{align}
Q_{0,3}
&=\mathcal{O}
\left(
\sum_{m=0}^k
|\nabla_x^mu_x|_g
\right). 
\label{eq:Q03}
\end{align}
Collecting 
\eqref{eq:na2}, 
\eqref{eq:Q0}, 
\eqref{eq:Q01}, 
\eqref{eq:Q02}, 
and 
\eqref{eq:Q03}, 
we obtain 
\begin{align}
%&\sum_{m=0}^{k-1}
%\nabla_x^{k-1-m}
%\left\{
%R(u_t,u_x)\nabla_x^mu_x
%\right\}
%\nonumber
%\\
Q&=
\ep\,
\mathcal{O}
\left(
|\nabla_x^{k+2}u_x|_g
\right)
+aS\, 
g(\nabla_x^2(\nabla_x^ku_x), u_x)J_uu_x
\nonumber
\\
&\quad
+aS(2k-1)\,g(\nabla_xu_x,u_x)J_u\nabla_x(\nabla_x^ku_x)
+aSk\,g(\nabla_x(\nabla_x^ku_x),J_u\nabla_xu_x)u_x
\nonumber
\\
&\quad
+
aS(k-1)\,
g(\nabla_x(\nabla_x^ku_x), J_uu_x)\nabla_xu_x
+
\mathcal{O}
\left(
\sum_{m=0}^k
|\nabla_x^mu_x|_g
\right).
\label{eq:nab_2}
\end{align}
Second, we use \eqref{eq:eppde} to compute the first term of the RHS of 
\eqref{eq:na1}. 
A simple computation shows 
\begin{align}
\nabla_x^{k+1}u_t
&=
-\ep\nabla_x^4(\nabla_x^ku_x)
+a\,J_u\nabla_x^4(\nabla_x^ku_x)
+\lambda\,J_u\nabla_x^2(\nabla_x^ku_x)
+b\,Q_{1,1}+c\,Q_{1,2}, 
\label{eq:nab_1}
\end{align}
where 
\begin{align}
Q_{1,1}
&=
\nabla_x^{k+1}\left\{
g(u_x,u_x)J_u\nabla_xu_x
\right\}
\nonumber
\\
&=
\sum_{\mu+\nu=0}^{k+1}
\frac{(k+1)!}{\mu!\nu!(k+1-\mu-\nu)!}
\,
g(\nabla_x^{\mu}u_x,\nabla_x^{\nu}u_x)J_u\nabla_x^{k+2-\mu-\nu}u_x
\nonumber
\\
&=
g(u_x,u_x)J_u\nabla_x^{k+2}u_x
+2(k+1)g(\nabla_xu_x,u_x)J_u\nabla_x^{k+1}u_x
\nonumber
\\
&\quad
+2g(\nabla_x^{k+1}u_x,u_x)J_u\nabla_xu_x
\nonumber
\\
&\quad
+\sum_{\substack{\mu+\nu=2, \\ \mu,\nu\leqslant k}}^{k+1}
\frac{(k+1)!}{\mu!\nu!(k+1-\mu-\nu)!}
\,
g(\nabla_x^{\mu}u_x,\nabla_x^{\nu}u_x)J_u\nabla_x^{k+2-\mu-\nu}u_x
\nonumber
\\
&=
\nabla_x\left\{
g(u_x,u_x)J_u\nabla_x(\nabla_x^ku_x)
\right\}
+2k\,
g(\nabla_xu_x,u_x)J_u\nabla_x(\nabla_x^ku_x)
\nonumber
\\
&\quad
+2\,g(\nabla_x(\nabla_x^ku_x), u_x)J_u\nabla_xu_x
+
\mathcal{O}
\left(
\sum_{m=0}^k
|\nabla_x^mu_x|_g
\right),
\label{eq:Q11}
\intertext{and}
Q_{1,2}
&=
\nabla_x^{k+1}
\left\{
g(\nabla_xu_x,u_x)J_uu_x
\right\}
\nonumber
\\
&=
\sum_{\mu+\nu=0}^{k+1}
\frac{(k+1)!}{\mu!\nu!(k+1-\mu-\nu)!}
\,g(\nabla_x^{\mu+1}u_x, \nabla_x^{\nu}u_x)J_u\nabla_x^{k+1-\mu-\nu}u_x 
\nonumber
\\
&=
g(\nabla_xu_x,u_x)J_u\nabla_x^{k+1}u_x
+g(\nabla_x^{k+2}u_x,u_x)J_uu_x
\nonumber
\\
&\quad 
+(k+1)g(\nabla_x^{k+1}u_x,\nabla_xu_x)J_uu_x
+
g(\nabla_xu_x,\nabla_x^{k+1}u_x)J_uu_x
\nonumber
\\
&\quad 
+
(k+1)g(\nabla_x^{k+1}u_x,u_x)J_u\nabla_xu_x
\nonumber
\\
&\quad 
+
\sum_{\substack{\mu+\nu=1, \\ \mu\leqslant k-1, \\ \nu\leqslant k}}^{k+1}
\frac{(k+1)!}{\mu!\nu!(k+1-\mu-\nu)!}
\,g(\nabla_x^{\mu+1}u_x, \nabla_x^{\nu}u_x)J_u\nabla_x^{k+1-\mu-\nu}u_x 
\nonumber
\\
&=
g(\nabla_x^2(\nabla_x^ku_x),u_x)J_uu_x
+g(\nabla_xu_x,u_x)J_u\nabla_x(\nabla_x^ku_x)
\nonumber
\\
&\quad 
+(k+2)g(\nabla_x(\nabla_x^ku_x),\nabla_xu_x)J_uu_x
+(k+1)g(\nabla_x(\nabla_x^ku_x),u_x)J_u\nabla_xu_x
\nonumber
\\
&\quad 
+
\mathcal{O}
\left(
\sum_{m=0}^k
|\nabla_x^mu_x|_g
\right).
\label{eq:Q12}
\end{align}
By collecting \eqref{eq:nab_2} and \eqref{eq:nab_1} with 
\eqref{eq:Q11} and with \eqref{eq:Q12}, we have 
\begin{align}
\nabla_t(\nabla_x^ku_x)
&=
-\ep\nabla_x^4(\nabla_x^ku_x)
+
\ep\,
\mathcal{O}
\left(
|\nabla_x^{k+2}u_x|_g
\right)
\nonumber
\\
&\quad 
+a\,J_u\nabla_x^4(\nabla_x^ku_x)
+\lambda\,J_u\nabla_x^2(\nabla_x^ku_x)
+b\,\nabla_x
\left\{
g(u_x,u_x)J_u\nabla_x(\nabla_x^ku_x)
\right\}
\nonumber
\\
&\quad 
+(aS+c)\,g(\nabla_x^2(\nabla_x^ku_x), u_x)J_uu_x
\nonumber
\\
&\quad
+\left\{
aS(2k-1)+2kb+c
\right\}
\,g(\nabla_xu_x,u_x)J_u\nabla_x(\nabla_x^ku_x)
\nonumber
\\
&\quad +
\left\{
2b+(k+1)c
\right\}\,
g(\nabla_x(\nabla_x^ku_x),u_x)J_u\nabla_xu_x
\nonumber
\\
&\quad 
+
(k+2)c\,
g(\nabla_x(\nabla_x^ku_x),\nabla_xu_x)J_uu_x
\nonumber
\\
&\quad
+aSk\,g(\nabla_x(\nabla_x^ku_x),J_u\nabla_xu_x)u_x
\nonumber
\\
&\quad 
+aS(k-1)\,g(\nabla_x(\nabla_x^ku_x), J_uu_x)\nabla_xu_x
\nonumber
\\
&\quad
+\mathcal{O}
\left(
\sum_{m=0}^k
|\nabla_x^mu_x|_g
\right).
\label{eq:maya}
\end{align}
\par
Furthermore, we modify the expression of some terms including 
$\nabla_x(\nabla_x^ku_x)$ 
to detect their essential structure. 
Let $Y\in \Gamma(u^{-1}TN)$ be fixed. 
We first use \eqref{eq:2d} to see 
\begin{align}
g(u_x,u_x)J_uY
&=
g(J_uY,u_x)u_x+g(J_uY,J_uu_x)J_uu_x
\nonumber
\\
&
=
g(Y,u_x)J_uu_x-g(Y,J_uu_x)u_x.
\nonumber
\end{align} 
Acting $\nabla_x$ to both sides of the above, 
we have 
\begin{align}
2\,g(\nabla_xu_x,u_x)J_uY
&=
g(Y,\nabla_xu_x)J_uu_x
+
g(Y,u_x)J_u\nabla_xu_x
\nonumber
\\
&\quad
-g(Y,J_u\nabla_xu_x)u_x
-g(Y,J_uu_x)\nabla_xu_x. 
\label{eq:maya2} 
\end{align}
We next introduce the following expression:  
\begin{align}
A_1Y
&=
g(Y,\nabla_xu_x)J_uu_x
+
g(Y,u_x)J_u\nabla_xu_x
\nonumber
\\
&\quad
+g(Y,J_u\nabla_xu_x)u_x
+g(Y,J_uu_x)\nabla_xu_x, 
\nonumber
\\
A_2Y
&=
g(Y,J_uu_x)\nabla_xu_x
-g(Y,J_u\nabla_xu_x)u_x.  
\nonumber
\end{align} 
We find ${}^tA_1=A_1$ and ${}^tA_2=A_2$ in $T_uN$. 
More precisely we can show the following.
\begin{proposition}
Let $Y_1,Y_2\in \Gamma(u^{-1}TN)$. 
Then 
\begin{align}
g(A_iY_1,Y_2)&=g(Y_1,A_iY_2)
\label{eq:gsym}
\end{align}
holds for each $(t,x)\in [0,T_{\ep}^{*}]\times \TT$ with $i=1,2$. 
\label{proposition:gsym}
\end{proposition}
\begin{proof}[Proof of Proposition~\ref{proposition:gsym}] 
If $i=1$, \eqref{eq:gsym} immediately follows from the definition 
of $A_1$. 
If $i=2$, \eqref{eq:gsym} follows from 
\begin{equation}
\left\{g(u_x,u_x)\right\}^2
\left\{
g(A_2Y_1,Y_2)-g(Y_1,A_2Y_2)
\right\}=0,
\label{eq:gsym2}
\end{equation}
since both sides of \eqref{eq:gsym} vanish at the point $(t,x)$ 
with $u_x(t,x)=0$. 
Indeed we can show \eqref{eq:gsym2} by the following computations. 
We first write 
\begin{align}
g(u_x,u_x)A_2Y_1
&=
g(u_x,u_x)
\left\{
g(Y_1,J_uu_x)\nabla_xu_x
-g(Y_1,J_u\nabla_xu_x)u_x
\right\}
\nonumber
\\
&=
g(g(u_x,u_x)Y_1,J_uu_x)\nabla_xu_x
-g(g(u_x,u_x)Y_1,J_u\nabla_xu_x)u_x, 
\nonumber
\end{align}
and we use \eqref{eq:2d} with $Y=Y_1$ to see 
\begin{align}
g(u_x,u_x)A_2Y_1
&=
g(g(Y_1,u_x)u_x+g(Y_1,J_uu_x)J_uu_x,J_uu_x)\nabla_xu_x
\nonumber
\\
&\quad 
-g(g(Y_1,u_x)u_x+g(Y_1,J_uu_x)J_uu_x,J_u\nabla_xu_x)u_x
\nonumber
\\
&=
g(u_x,u_x)g(Y_1,J_uu_x)\nabla_xu_x
-g(u_x,J_u\nabla_xu_x)g(Y_1,u_x)u_x
\nonumber
\\
&\quad 
-g(u_x,\nabla_xu_x)g(Y_1,J_uu_x)u_x. 
\nonumber 
\end{align}
This implies 
\begin{align}
\left\{g(u_x,u_x)\right\}^2g(A_2Y_1,Y_2) 
&=
g(g(u_x,u_x)A_2Y_1, g(u_x,u_x)Y_2)
\nonumber
\\
&=
g(u_x,u_x)g(Y_1,J_uu_x)g(\nabla_xu_x,g(u_x,u_x)Y_2)
\nonumber
\\
&\quad
-g(u_x,J_u\nabla_xu_x)g(Y_1,u_x)g(u_x,g(u_x,u_x)Y_2)
\nonumber
\\
&\quad 
-g(u_x,\nabla_xu_x)g(Y_1,J_uu_x)g(u_x,g(u_x,u_x)Y_2). 
\label{eq:iri1}
\end{align}
Using \eqref{eq:2d} again with $Y=Y_2$, we see 
\begin{align}
g(u_x,g(u_x,u_x)Y_2)
&=
g(u_x,u_x)g(Y_2,u_x), 
\nonumber
\\
g(\nabla_xu_x,g(u_x,u_x)Y_2)
&=
g(\nabla_xu_x,u_x)g(Y_2,u_x)
+
g(\nabla_xu_x,J_uu_x)g(Y_2,J_uu_x).
\nonumber
\end{align}
Substituting them into \eqref{eq:iri1}, we have 
\begin{align}
&\left\{g(u_x,u_x)\right\}^2
g(A_2Y_1,Y_2)
\nonumber
\\
&=
g(u_x,u_x)g(\nabla_xu_x,J_uu_x)
\left\{
g(Y_1,J_uu_x)g(Y_2,J_uu_x)
+
g(Y_1,u_x)g(Y_2,u_x)
\right\}.
\nonumber
\end{align}
As the form of the RHS is symmetric with respect to 
$Y_1$ and $Y_2$, we immediately conclude that 
the desired property \eqref{eq:gsym2} holds.  
\end{proof} 
Using \eqref{eq:maya2} and the definition of $A_1$ and $A_2$, 
we have  
\begin{align}
&g(Y,J_uu_x)\nabla_xu_x
\nonumber
\\
&=
\frac{1}{4}
\biggl\{
g(Y,\nabla_xu_x)J_uu_x
+
g(Y,u_x)J_u\nabla_xu_x
+g(Y,J_u\nabla_xu_x)u_x
+g(Y,J_uu_x)\nabla_xu_x
\biggr\}
\nonumber
\\
&\quad
-\frac{1}{4}
\biggl\{
g(Y,\nabla_xu_x)J_uu_x
+
g(Y,u_x)J_u\nabla_xu_x
-g(Y,J_u\nabla_xu_x)u_x
-g(Y,J_uu_x)\nabla_xu_x
\biggr\}
\nonumber
\\
&\quad
+\frac{1}{2}
\biggl\{
g(Y,J_uu_x)\nabla_xu_x
-g(Y,J_u\nabla_xu_x)u_x
\biggr\}
\nonumber
\\
&=
-\frac{1}{2}\,
g(\nabla_xu_x,u_x)J_uY
+\frac{1}{4}A_1Y
+\frac{1}{2}A_2Y.
\label{eq:ayaA}
\end{align}
In the same way, we have 
\begin{align}
g(Y,J_u\nabla_xu_x)u_x
&=
-\frac{1}{2}\,
g(\nabla_xu_x,u_x)J_uY
+\frac{1}{4}A_1Y
-\frac{1}{2}A_2Y.
\label{eq:ayaB}
\end{align}
Using ${}^{t}J_u=-J_u$ in $T_uN$, \eqref{eq:gsym}, and \eqref{eq:ayaB}, 
we deduce 
\begin{align}
g(Y,u_x)J_u\nabla_xu_x
&=
{}^{t}\left(
g(\cdot,J_u\nabla_xu_x)u_x
\right)Y
\nonumber
\\
&=
-\frac{1}{2}\,
g(\nabla_xu_x,u_x)\,{}^{t}J_uY
+\frac{1}{4}\,{}^tA_1Y
-\frac{1}{2}\,{}^tA_2Y
\nonumber
\\
&=
\frac{1}{2}\,
g(\nabla_xu_x,u_x)J_uY
+\frac{1}{4}A_1Y
-\frac{1}{2}A_2Y, 
\label{eq:ayaC}
\end{align}
and 
\begin{align}
g(Y,\nabla_xu_x)J_uu_x
&=
{}^{t}\left(
g(\cdot,J_uu_x)\nabla_xu_x
\right)Y
\nonumber
\\
&=
\frac{1}{2}\,
g(\nabla_xu_x,u_x)J_uY
+\frac{1}{4}A_1Y
+\frac{1}{2}A_2Y. 
\label{eq:ayaD}
\end{align}
Applying 
\eqref{eq:ayaA}, 
\eqref{eq:ayaB}, 
\eqref{eq:ayaC}, 
and \eqref{eq:ayaD} 
to the RHS of \eqref{eq:maya}, 
we derive 
\begin{align}
\nabla_t(\nabla_x^ku_x)
&=
-\ep\nabla_x^4(\nabla_x^ku_x)
+
\ep\,
\mathcal{O}
\left(
|\nabla_x^{k+2}u_x|_g
\right)
\nonumber
\\
&\quad 
+a\,J_u\nabla_x^4(\nabla_x^ku_x)
+\lambda\,J_u\nabla_x^2(\nabla_x^ku_x)
+b\,\nabla_x
\left\{
g(u_x,u_x)J_u\nabla_x(\nabla_x^ku_x)
\right\}
\nonumber
\\
&\quad 
+c_1\,g(\nabla_x^2(\nabla_x^ku_x), u_x)J_uu_x
+c_2
\,g(\nabla_xu_x,u_x)J_u\nabla_x(\nabla_x^ku_x)
\nonumber
\\
&\quad 
+c_3\,A_1\nabla_x(\nabla_x^ku_x)
+c_4\,A_2\nabla_x(\nabla_x^ku_x)
\nonumber
\\
&\quad
+\mathcal{O}
\left(
\sum_{m=0}^k
|\nabla_x^mu_x|_g
\right),
\label{eq:ayaya}
\end{align}
where $c_1,\ldots,c_4$ are constants given by 
$a,b,c$ and $S$. 
More concretely,   
\begin{align}
c_1&=aS+c, 
\label{eq:c1}
\\
c_2&=\left\{
aS(2k-1)+2kb+c
\right\}
+
\frac{1}{2}
\left\{
2b+(k+1)c
+(k+2)c-aSk-aS(k-1)
\right\}
\nonumber
\\
&=
\left(k-\frac{1}{2}\right)aS
+(2k+1)b
+\left(k+\frac{5}{2}\right)c. 
\label{eq:c2}
\end{align}
We omit to describe the explicit form of $c_3$ and $c_4$, 
as they will not be used later. 
\par 
We are now in position to evaluate the first term of 
the RHS of \eqref{eq:eqV_k}. 
Using \eqref{eq:ayaya}, 
we have 
\begin{align}
&\int_{\TT}
g(\nabla_t(\nabla_x^ku_x), \nabla_x^ku_x)dx
\nonumber
\\
&=
-\ep
\int_{\TT}
g(\nabla_x^4(\nabla_x^ku_x), \nabla_x^ku_x)dx
+
\ep\,
\int_{\TT}
g(
\mathcal{O}
\left(
|\nabla_x^{k+2}u_x|_g
\right), 
\nabla_x^ku_x)dx
\nonumber
\\
&\quad 
+a\,
\int_{\TT}
g(J_u\nabla_x^4(\nabla_x^ku_x),  \nabla_x^ku_x)dx
+\lambda\,
\int_{\TT}
g(J_u\nabla_x^2(\nabla_x^ku_x), \nabla_x^ku_x)dx
\nonumber
\\
&\quad 
+b\,
\int_{\TT}
g(\nabla_x
\left\{
g(u_x,u_x)J_u\nabla_x(\nabla_x^ku_x)
\right\}, 
\nabla_x^ku_x)dx
\nonumber
\\
&\quad 
+c_1\,
\int_{\TT}
g(
g(\nabla_x^2(\nabla_x^ku_x), u_x)J_uu_x, 
\nabla_x^ku_x)dx
\nonumber
\\
&\quad
+c_2
\,\int_{\TT}
g(
g(\nabla_xu_x,u_x)J_u\nabla_x(\nabla_x^ku_x), 
\nabla_x^ku_x)dx
\nonumber
\\
&\quad 
+c_3\,\int_{\TT}
g(A_1\nabla_x(\nabla_x^ku_x), \nabla_x^ku_x)dx
+c_4\,\int_{\TT}
g(A_2\nabla_x(\nabla_x^ku_x),  \nabla_x^ku_x)dx
\nonumber
\\
&\quad
+\int_{\TT}
g(\mathcal{O}
\left(
\sum_{m=0}^k
|\nabla_x^mu_x|_g
\right),
 \nabla_x^ku_x)dx. 
\nonumber
\end{align}
We compute each term of the above separately. 
By integrating by parts, we obtain 
\begin{align}
&
a\,
\int_{\TT}
g(J_u\nabla_x^4(\nabla_x^ku_x),  \nabla_x^ku_x)dx
=
a\,
\int_{\TT}
g(J_u\nabla_x^2(\nabla_x^ku_x),  \nabla_x^2(\nabla_x^ku_x))dx
=0, 
\nonumber
\\
&\lambda\,
\int_{\TT}
g(J_u\nabla_x^2(\nabla_x^ku_x), \nabla_x^ku_x)dx
=
-\lambda\,
\int_{\TT}
g(J_u\nabla_x(\nabla_x^ku_x), \nabla_x(\nabla_x^ku_x))dx
=0, 
\nonumber
\\
&b\,
\int_{\TT}
g(\nabla_x
\left\{
g(u_x,u_x)J_u\nabla_x(\nabla_x^ku_x)
\right\}, 
\nabla_x^ku_x)dx
\nonumber
\\
&
=
-b\,
\int_{\TT}
g(
g(u_x,u_x)J_u\nabla_x(\nabla_x^ku_x), 
\nabla_x(\nabla_x^ku_x))dx
=0. 
\nonumber
\end{align} 
By using the Cauchy-Schwartz inequality, 
we have 
\begin{align}
\int_{\TT}
g(\mathcal{O}
\left(
\sum_{m=0}^k
|\nabla_x^mu_x|_g
\right),
 \nabla_x^ku_x)dx
\leqslant 
C\|u_x\|_{H^k}\|\nabla_x^ku_x\|_{L^2}
\leqslant 
C\|u_x\|_{H^k}^2.
\end{align}
Using the integration by parts, the Young inequality 
$AB\leqslant A^2/2+B^2/2$ for any $A,B\geqslant 0$, 
and $\ep\leqslant 1$, 
we deduce 
\begin{align}
&-\ep
\int_{\TT}
g(\nabla_x^4(\nabla_x^ku_x), \nabla_x^ku_x)dx
+
\ep\,
\int_{\TT}
g(
\mathcal{O}
\left(
|\nabla_x^{k+2}u_x|_g
\right), 
\nabla_x^ku_x)dx
\nonumber
\\
&\leqslant
-\ep
\|\nabla_x^2(\nabla_x^ku_x))\|_{L^2}^2
+
\ep\,C
\|\nabla_x^2(\nabla_x^ku_x)\|_{L^2}
\|\nabla_x^ku_x\|_{L^2}
\nonumber
\\
&\leqslant 
-\ep
\|\nabla_x^2(\nabla_x^ku_x))\|_{L^2}^2
+
\frac{\ep}{2}
\|\nabla_x^2(\nabla_x^ku_x)\|_{L^2}^2
+
\frac{\ep\,C^2}{2}
\|\nabla_x^ku_x\|_{L^2}^2
\nonumber
\\
&\leqslant 
-
\frac{\ep}{2}
\|\nabla_x^2(\nabla_x^ku_x)\|_{L^2}^2
+
\frac{C^2}{2}
\|u_x\|_{H^k}^2.
\nonumber
\end{align}
By integrating by parts 
and by using \eqref{eq:gsym}, 
we have 
\begin{align}
&c_3\,\int_{\TT}
g(A_1\nabla_x(\nabla_x^ku_x), \nabla_x^ku_x)dx
+c_4\,\int_{\TT}
g(A_2\nabla_x(\nabla_x^ku_x),  \nabla_x^ku_x)dx
\nonumber
\\
&=
-\frac{c_3}{2}
g\left(\nabla_x(A_1)\nabla_x^ku_x, \nabla_x^ku_x\right)dx
-\frac{c_4}{2}
g\left(\nabla_x(A_2)\nabla_x^ku_x, \nabla_x^ku_x\right)dx
\nonumber
\\
&\leqslant 
C\|u_x\|_{H^k}^2. 
\nonumber
\end{align}
Collecting them and noting that 
$\|u_x\|_{H^k}\leqslant CN_k(u)$ 
follows from \eqref{eq:timecut}, 
we derive 
\begin{align}
&\int_{\TT}
g(\nabla_t(\nabla_x^ku_x), \nabla_x^ku_x)dx
\nonumber
\\
&\leqslant 
-\frac{\ep}{2}\|\nabla_x^2(\nabla_x^ku_x)\|_{L^2}^2
%\nonumber
%\\
%&\quad 
+c_1\,
\int_{\TT}
g(
g(\nabla_x^2(\nabla_x^ku_x), u_x)J_uu_x, 
\nabla_x^ku_x)dx
\nonumber
\\
&\quad
+c_2
\,\int_{\TT}
g(
g(\nabla_xu_x,u_x)J_u\nabla_x(\nabla_x^ku_x), 
\nabla_x^ku_x)dx
+C\,(N_k(u))^2. 
\label{eq:V1}
\end{align}
\par
We next evaluate the second term of the RHS of \eqref{eq:eqV_k}. 
In the computation, it is to be noted that 
$\Lambda=
\mathcal{O}(|\nabla_x^{k-2}u_x|_g)$ 
and 
$$
\nabla_t(\nabla_x^ku_x)=
-\ep\,\nabla_x^4(\nabla_x^ku_x)
+
a\,J_u\nabla_x^4(\nabla_x^ku_x)
+
\mathcal{O}\left(
\sum_{m=0}^{k+2}
|\nabla_x^mu_x|_g
\right). 
$$
By noting them and  by integrating by parts, we obtain  
\begin{align}
&\int_{\TT}
g(\nabla_t(\nabla_x^ku_x), \Lambda)dx
\nonumber
\\
&\leqslant 
-\ep\,
\int_{\TT}
g(\nabla_x^4(\nabla_x^ku_x), \Lambda)dx
+
a\,
\int_{\TT}
g(J_u\nabla_x^4(\nabla_x^ku_x), \Lambda)dx
+
C\|u_x\|_{H^k}^2. 
\label{eq:V20}
\end{align}
For the first term of the RHS of \eqref{eq:V20}, 
by using $\ep\leqslant 1$, the integration by parts, 
the Young inequality 
$AB\leqslant A^2/8+2B^2$ for any $A,B\geqslant 0$, and 
$\Lambda=\mathcal{O}(|\nabla_x^{k-2}u_x|_g)$, 
we have 
\begin{align}
-\ep\,
\int_{\TT}
g(\nabla_x^4(\nabla_x^ku_x), \Lambda)dx
&=
-\ep\,
\int_{\TT}
g(\nabla_x^2(\nabla_x^ku_x), \nabla_x^2(\Lambda))dx
\nonumber
\\
&\leqslant 
\ep\|\nabla_x^2(\nabla_x^ku_x)\|_{L^2}\| \nabla_x^2(\Lambda))\|_{L^2}
\nonumber
\\
&\leqslant
\frac{\ep}{8}
\|\nabla_x^2(\nabla_x^ku_x)\|_{L^2}^2
+
2\ep\| \nabla_x^2(\Lambda))\|_{L^2}^2
\nonumber
\\
&\leqslant 
\frac{\ep}{8}
\|\nabla_x^2(\nabla_x^ku_x)\|_{L^2}^2
+C\|u_x\|_{H^k}^2. 
\label{eq:V21}
\end{align}
For the second term of the RHS of \eqref{eq:V20}, 
we compute $\nabla_x^2\Lambda$ to see 
\begin{align}
\nabla_x^2\Lambda
%&=
%\nabla_x^2\Lambda_1
%+
%\nabla_x^2\Lambda_2
%\nonumber
%\\
&=
-\frac{d_1}{2a}\nabla_x^2
\left\{
g(\nabla_x^{k-2}u_x,J_uu_x)J_uu_x
\right\}
+
\frac{d_2}{8a}
\nabla_x^2\left\{
g(u_x,u_x)\nabla_x^{k-2}u_x
\right\}
\nonumber
\\
&=
-\frac{d_1}{2a}
g(\nabla_x^ku_x,J_uu_x)J_uu_x
+
\frac{d_2}{8a}
g(u_x,u_x)\nabla_x^ku_x
\nonumber
\\
&\quad 
-\frac{d_1}{a}
g(\nabla_x^{k-1}u_x,J_u\nabla_xu_x)J_uu_x 
-\frac{d_1}{a}
g(\nabla_x^{k-1}u_x,J_uu_x)J_u\nabla_xu_x
\nonumber
\\
&\quad 
+\frac{d_2}{2a}
g(\nabla_xu_x,u_x)\nabla_x^{k-1}u_x
+
\mathcal{O}
\left(
\sum_{m=0}^{k-2}
|\nabla_x^mu_x|_g
\right). 
\nonumber
\end{align}
Thus, by integrating by parts 
and by substituting the above, 
we obtain 
\begin{align}
&a\,
\int_{\TT}
g(J_u\nabla_x^4(\nabla_x^ku_x), \Lambda)dx
\nonumber
\\
&=
a\,
\int_{\TT}
g(J_u\nabla_x^2(\nabla_x^ku_x), \nabla_x^2\Lambda)dx
\nonumber
\\
&= 
-\frac{d_1}{2}Q_{2,1}
+\frac{d_2}{8}Q_{2,2}
-d_1Q_{2,3}-d_1Q_{2,4}
+\frac{d_2}{2}Q_{2,5}
+Q_{2,6}, 
\label{eq:Q2}
\end{align}
where 
\begin{align}
Q_{2,1}
&=
\int_{\TT}
g(\nabla_x^ku_x,J_uu_x)
g(J_u\nabla_x^2(\nabla_x^ku_x),J_uu_x)\,dx,
\nonumber
\\
Q_{2,2}
&=
\int_{\TT}
g(u_x,u_x)
g(J_u\nabla_x^2(\nabla_x^ku_x), \nabla_x^ku_x)\,dx,
\nonumber
\\
Q_{2,3}
&=
\int_{\TT}
g(\nabla_x^{k-1}u_x,J_u\nabla_xu_x)
g(J_u\nabla_x^2(\nabla_x^ku_x),J_uu_x)\,dx, 
\nonumber
\\
Q_{2,4}
&=
\int_{\TT}
g(\nabla_x^{k-1}u_x,J_uu_x)
g(J_u\nabla_x^2(\nabla_x^ku_x),J_u\nabla_xu_x)\,dx,
\nonumber
\\
Q_{2,5}
&=
\int_{\TT}
g(\nabla_xu_x,u_x)
g(J_u\nabla_x^2(\nabla_x^ku_x), \nabla_x^{k-1}u_x)\,dx,
\nonumber
\\
Q_{2,6}
&=
\int_{\TT}
g(J_u\nabla_x^2(\nabla_x^ku_x), 
\mathcal{O}
\left(
\sum_{m=0}^{k-2}
|\nabla_x^mu_x|_g
\right)
)\,dx.
\nonumber
\end{align}
We compute $Q_{2,1},\ldots,Q_{2,6}$ separately.
By the integration by parts and the property of hermitian metric $g$, 
we deduce 
\begin{align}
Q_{2,1}
&=
\int_{\TT}
g(\nabla_x^ku_x,J_uu_x)
g(\nabla_x^2(\nabla_x^ku_x),u_x)\,dx,
\nonumber
\\
&=
\int_{\TT}
g(g(\nabla_x^2(\nabla_x^ku_x), u_x)J_uu_x, \nabla_x^ku_x)\,dx,
\nonumber
\\
Q_{2,2}
&=
\int_{\TT}
g(
\nabla_x\left\{
g(u_x,u_x)J_u\nabla_x(\nabla_x^ku_x)
\right\}, 
\nabla_x^ku_x
)\,dx
\nonumber
\\
&\quad 
-2\,\int_{\TT}
g(
g(\nabla_xu_x,u_x)J_u\nabla_x(\nabla_x^ku_x), 
\nabla_x^ku_x
)\,dx
\nonumber
\\
&=
-2\,\int_{\TT}
g(
g(\nabla_xu_x,u_x)J_u\nabla_x(\nabla_x^ku_x), 
\nabla_x^ku_x)\,dx, 
\nonumber
\\
Q_{2,3}
&=
\int_{\TT}
g(\nabla_x^{k-1}u_x,J_u\nabla_xu_x)
g(\nabla_x^2(\nabla_x^ku_x),u_x)\,dx
\nonumber
\\
&\leqslant 
-\int_{\TT}
g(\nabla_x^{k}u_x,J_u\nabla_xu_x)
g(\nabla_x(\nabla_x^ku_x),u_x)\,dx
+C\|u_x\|_{H^k}^2
\nonumber
\\
&=
-\int_{\TT}
g(
g(\nabla_x(\nabla_x^ku_x), u_x)J_u\nabla_xu_x, \nabla_x^ku_x
)\,dx
+C\|u_x\|_{H^k}^2, 
\nonumber
\\
Q_{2,4}
&=
\int_{\TT}
g(\nabla_x^{k-1}u_x,J_uu_x)
g(\nabla_x^2(\nabla_x^ku_x),\nabla_xu_x)\,dx
\nonumber
\\
&\leqslant 
-\int_{\TT}
g(\nabla_x^{k}u_x,J_uu_x)
g(\nabla_x(\nabla_x^ku_x),\nabla_xu_x)\,dx
+C\|u_x\|_{H^k}^2
\nonumber
\\
&=
-\int_{\TT}
g(
g(\nabla_x(\nabla_x^ku_x), \nabla_xu_x)J_uu_x, \nabla_x^ku_x
)\,dx
+C\|u_x\|_{H^k}^2, 
\nonumber
\\
Q_{2,5}
&\leqslant 
-\int_{\TT}
g(
g(\nabla_xu_x,u_x)
J_u\nabla_x(\nabla_x^ku_x), 
\nabla_x^ku_x
)\,dx
+C\|u_x\|_{H^k}^2, 
\nonumber
\\
Q_{2,6}
&\leqslant 
C\|u_x\|_{H^k}^2. 
\nonumber
\end{align} 
Applying them to \eqref{eq:Q2}, 
we obtain 
\begin{align}
a\,
\int_{\TT}
g(J_u\nabla_x^4(\nabla_x^ku_x), \Lambda)dx
&\leqslant
-\frac{d_1}{2}
\int_{\TT}
g(
g(\nabla_x^2(\nabla_x^ku_x), u_x)J_uu_x, \nabla_x^ku_x
)\,dx
\nonumber
\\
&\quad
-\frac{3d_2}{4}
\int_{\TT}
g(
g(\nabla_xu_x,u_x)
J_u\nabla_x(\nabla_x^ku_x), 
\nabla_x^ku_x
)\,dx
\nonumber
\\
&\quad 
+d_1\,
\int_{\TT}
g(
g(\nabla_x(\nabla_x^ku_x), u_x)J_u\nabla_xu_x, \nabla_x^ku_x
)\,dx
\nonumber
\\
&\quad 
+d_1\,
\int_{\TT}
g(
g(\nabla_x(\nabla_x^ku_x), \nabla_xu_x)J_uu_x, \nabla_x^ku_x
)\,dx
\nonumber
\\
&\quad 
+C\|u_x\|_{H^k}^2.
\label{eq:ri1}
\end{align}
Here we rewrite the sum of the third and the fourth term of the RHS
by using \eqref{eq:ayaC} and \eqref{eq:ayaD},  
and use the integration by parts and \eqref{eq:gsym}
to find 
\begin{align}
&d_1\,
\int_{\TT}
g(
g(\nabla_x(\nabla_x^ku_x), u_x)J_u\nabla_xu_x, \nabla_x^ku_x
)\,dx
\nonumber
\\
&\quad 
+d_1\,
\int_{\TT}
g(
g(\nabla_x(\nabla_x^ku_x), \nabla_xu_x)J_uu_x, \nabla_x^ku_x
)\,dx
\nonumber
\\
&=
d_1\,\int_{\TT}
g(g(\nabla_xu_x,u_x)J_u\nabla_x(\nabla_x^ku_x),\nabla_x^ku_x)\,dx
+
\frac{d_1}{2}
\int_{\TT}
g(A_1\nabla_x(\nabla_x^ku_x),\nabla_x^ku_x)\,dx
\nonumber
\\
&\leqslant 
d_1\,\int_{\TT}
g(g(\nabla_xu_x,u_x)J_u\nabla_x(\nabla_x^ku_x),\nabla_x^ku_x)\,dx
+
C\|u_x\|_{H^k}^2.
\label{eq:ri2}
\end{align} 
Combining \eqref{eq:ri1} and \eqref{eq:ri2}, we have 
\begin{align}
a\,
\int_{\TT}
g(J_u\nabla_x^4(\nabla_x^ku_x), \Lambda)dx
&\leqslant
-\frac{d_1}{2}
\int_{\TT}
g(
g(\nabla_x^2(\nabla_x^ku_x), u_x)J_uu_x, \nabla_x^ku_x
)\,dx
\nonumber
\\
&\quad
+\left(d_1-\frac{3d_2}{4}\right)
\int_{\TT}
g(
g(\nabla_xu_x,u_x)
J_u\nabla_x(\nabla_x^ku_x), 
\nabla_x^ku_x
)\,dx
\nonumber
\\
&\quad 
+C\|u_x\|_{H^k}^2.
\label{eq:V22}
\end{align}
Therefore, 
from  
\eqref{eq:V20}, 
\eqref{eq:V21}, 
and \eqref{eq:V22}, 
it follows that 
\begin{align}
&\int_{\TT}
g(\nabla_t(\nabla_x^ku_x), \Lambda)\,dx
\nonumber
\\
&\leqslant 
\frac{\ep}{8}
\|\nabla_x^2(\nabla_x^ku_x)\|_{L^2}^2
-\frac{d_1}{2}
\int_{\TT}
g(
g(\nabla_x^2(\nabla_x^ku_x), u_x)J_uu_x, \nabla_x^ku_x
)\,dx
\nonumber
\\
&\quad
+\left(d_1-\frac{3d_2}{4}\right)
\int_{\TT}
g(
g(\nabla_xu_x,u_x)
J_u\nabla_x(\nabla_x^ku_x), 
\nabla_x^ku_x
)\,dx
+C\|u_x\|_{H^k}^2.
\label{eq:V2e}
\end{align}
\par 
We next evaluate the third term of the RHS of \eqref{eq:eqV_k}. 
For this purpose, we compute 
$\nabla_t\Lambda$. 
Using the product formula and noting 
$\nabla_tu_x=\nabla_xu_t
=\mathcal{O}
\left(
\displaystyle 
\sum_{m=0}^4|\nabla_x^mu_x|_g
\right),
$ 
we have  
\begin{align}
\nabla_t\Lambda
&=
-\frac{d_1}{2a}
g(\nabla_t\nabla_x^{k-2}u_x,J_uu_x)J_uu_x
-\frac{d_1}{2a}
g(\nabla_x^{k-2}u_x,J_u\nabla_tu_x)J_uu_x
\nonumber
\\
&\quad
-\frac{d_1}{2a}
g(\nabla_x^{k-2}u_x,J_uu_x)J_u\nabla_tu_x
+
\frac{d_2}{8a}g(u_x,u_x)\nabla_t\nabla_x^{k-2}u_x
\nonumber
\\
&\quad 
+\frac{d_2}{4a}g(\nabla_xu_t,u_x)\nabla_x^{k-2}u_x
\nonumber
\\
&=
-\frac{d_1}{2a}
g(\nabla_t\nabla_x^{k-2}u_x,J_uu_x)J_uu_x
+
\frac{d_2}{8a}g(u_x,u_x)\nabla_t\nabla_x^{k-2}u_x
\nonumber
\\
&\quad 
+
\mathcal{O}
\left(
|\nabla_x^{k-2}u_x|_g 
\sum_{m=0}^4|\nabla_x^mu_x|_g
\right).
\nonumber
\end{align}
Thus, we have 
\begin{align}
\int_{\TT}
g(\nabla_t\Lambda,V_k)\,dx
&=
Q_{3,1}+Q_{3,2}+Q_{3,2}, 
\nonumber
\end{align}
where, 
\begin{align}
Q_{3,1}
&=
-\frac{d_1}{2a}
\int_{\TT}
g(g(\nabla_t\nabla_x^{k-2}u_x, J_uu_x)J_uu_x,V_k)\,dx,
\nonumber
\\
Q_{3,2}
&=
\frac{d_2}{8a}
\int_{\TT}
g(g(u_x,u_x)\nabla_t\nabla_x^{k-2}u_x,V_k)\,dx, 
\nonumber
\\
Q_{3,3}
&=
\int_{\TT}
g(
\mathcal{O}
\left(
|\nabla_x^{k-2}u_x|_g 
\sum_{m=0}^4|\nabla_x^mu_x|_g
\right),V_k)\,dx. 
\nonumber
\end{align}
For $Q_{3,3}$, 
since $k\geqslant 4$,  
we use the Sobolev embedding and the Cauchy-Schwartz inequality
to obtain  
\begin{align}
Q_{3,3}
&\leqslant 
C\|u_x\|_{H^k}^2. 
\label{eq:Q33}
\end{align}
For $Q_{3,1}$ and $Q_{3,2}$, 
we need to compute $\nabla_t\nabla_x^{k-2}u_x$. 
Indeed, 
by the same computation as that we obtain $\nabla_t(\nabla_x^{k}u_x)$, 
we find 
\begin{align}
\nabla_t\nabla_x^{k-2}u_x
&=
-\ep\nabla_x^4(\nabla_x^{k-2}u_x)
+a\,J_u\nabla_x^4(\nabla_x^{k-2}u_x)
+
\mathcal{O}
\left(
\sum_{m=0}^k
|\nabla_x^mu_x|_g
\right)
\nonumber
\\
&=
\ep\,
\mathcal{O}
(|\nabla_x^{k+2}u_x|_g)
+a\,J_u\nabla_x^2(\nabla_x^{k}u_x)
+
\mathcal{O}
\left(
\sum_{m=0}^k
|\nabla_x^mu_x|_g
\right).
\label{eq:k-2}
\end{align}
Applying \eqref{eq:k-2}, we deduce 
\begin{align}
Q_{3,1}
&=
-\frac{d_1}{2a}
\int_{\TT}
g(g(\nabla_t\nabla_x^{k-2}u_x, J_uu_x)J_uu_x,\nabla_x^ku_x+\Lambda)\,dx
\nonumber
\\
&\leqslant 
-\frac{d_1}{2a}
\int_{\TT}
g(g(\nabla_t\nabla_x^{k-2}u_x, J_uu_x)J_uu_x,\nabla_x^ku_x)\,dx
+C\|u_x\|_{H^k}^2
\nonumber
\\
&\leqslant 
\ep\,
\int_{\TT}
g(\mathcal{O}(|\nabla_x^{k+2}u_x|_g),\nabla_x^ku_x)\,dx
\nonumber
\\
&\quad
-\frac{d_1}{2}
\int_{\TT}
g(g(J_u\nabla_x^2(\nabla_x^ku_x), J_uu_x)J_uu_x,\nabla_x^ku_x)\,dx 
+C\|u_x\|_{H^k}^2
\nonumber
\\
&\leqslant 
\frac{\ep}{8}\|\nabla_x^2(\nabla_x^ku_x)\|_{L^2}^2
-\frac{d_1}{2}
\int_{\TT}
g(g(\nabla_x^2(\nabla_x^ku_x), u_x)J_uu_x,\nabla_x^ku_x)\,dx 
+C\|u_x\|_{H^k}^2.
\label{eq:Q31}
\end{align}
In the same way, applying \eqref{eq:k-2}, we deduce  
\begin{align}
Q_{3,2}
&=
\frac{d_2}{8a}
\int_{\TT}
g(g(u_x,u_x)\nabla_t\nabla_x^{k-2}u_x,\nabla_x^ku_x+\Lambda)\,dx 
\nonumber
\\
&\leqslant 
\frac{d_2}{8a}
\int_{\TT}
g(g(u_x,u_x)\nabla_t\nabla_x^{k-2}u_x,\nabla_x^ku_x)\,dx
+C\|u_x\|_{H^k}^2
\nonumber
\\
&\leqslant 
\ep\,
\int_{\TT}
g(\mathcal{O}(|\nabla_x^{k+2}u_x|_g),\nabla_x^ku_x)\,dx
\nonumber
\\
&\quad 
+
\frac{d_2}{8}
\int_{\TT}
g(g(u_x,u_x)J_u\nabla_x^2(\nabla_x^ku_x),\nabla_x^ku_x)\,dx
+C\|u_x\|_{H^k}^2  
\nonumber 
\\
&\leqslant 
\frac{\ep}{8}\|\nabla_x^2(\nabla_x^ku_x)\|_{L^2}^2 
+
\frac{d_2}{8}
\int_{\TT}
g(\nabla_x\left\{g(u_x,u_x)J_u\nabla_x(\nabla_x^ku_x)\right\},
\nabla_x^ku_x)\,dx
\nonumber
\\
&\quad 
-\frac{d_2}{4}
\int_{\TT}
g(g(\nabla_xu_x,u_x)J_u\nabla_x(\nabla_x^ku_x), \nabla_x^ku_x)\,dx
+C\|u_x\|_{H^k}^2 
\nonumber
\\
&=
\frac{\ep}{8}\|\nabla_x^2(\nabla_x^ku_x)\|_{L^2}^2
-\frac{d_2}{4}
\int_{\TT}
g(g(\nabla_xu_x,u_x)J_u\nabla_x(\nabla_x^ku_x), \nabla_x^ku_x)\,dx
\nonumber
\\
&\quad
+C\|u_x\|_{H^k}^2. 
\label{eq:Q32}
\end{align}
Collecting 
\eqref{eq:Q33}, 
\eqref{eq:Q31}, 
and 
\eqref{eq:Q32}, 
we obtain 
\begin{align}
&\int_{\TT}
g(\nabla_t\Lambda, V_k)\,dx
\nonumber
\\
&\leqslant 
\frac{\ep}{4}\|\nabla_x^2(\nabla_x^ku_x)\|_{L^2}^2
-\frac{d_1}{2}
\int_{\TT}
g(g(\nabla_x^2(\nabla_x^ku_x), u_x)J_uu_x,\nabla_x^ku_x)\,dx
\nonumber
\\
&\quad 
-\frac{d_2}{4}
\int_{\TT}
g(g(\nabla_xu_x,u_x)J_u\nabla_x(\nabla_x^ku_x), \nabla_x^ku_x)\,dx
+C\|u_x\|_{H^k}^2. 
\label{eq:V3e}
\end{align}
\par 
Consequently, collecting the information 
\eqref{eq:eqV_k}, 
\eqref{eq:V1}, 
\eqref{eq:V2e}, 
and 
\eqref{eq:V3e}, 
we derive 
\begin{align}
\frac{1}{2}
\frac{d}{dt}
\|V_k\|_{L^2}^2
&\leqslant 
-\frac{\ep}{8}\|\nabla_x^2(\nabla_x^ku_x)\|_{L^2}^2
+(c_1-d_1)
\int_{\TT}
g(g(\nabla_x^2(\nabla_x^ku_x), u_x)J_uu_x,\nabla_x^ku_x)\,dx
\nonumber
\\
&\quad 
+(c_2+d_1-d_2)
\int_{\TT}
g(g(\nabla_xu_x,u_x)J_u\nabla_x(\nabla_x^ku_x), \nabla_x^ku_x)\,dx
\nonumber
\\
&\quad 
+C\|u_x\|_{H^k}^2
+C(N_k(u))^2, 
\nonumber
\end{align}
where $c_1$ and $c_2$ are given by 
\eqref{eq:c1} and \eqref{eq:c2}. 
To cancell the second and the third term of the RHS of above, 
we set $d_1$ and $d_2$ so that 
\begin{align}
d_1&=c_1=aS+c, 
\nonumber
\\
d_2&=c_2+d_1
=
\left(k+\frac{1}{2}\right)aS
+(2k+1)b
+\left(k+\frac{7}{2}\right)c.
\nonumber
\end{align}
Therefore, using $\|u_x\|_{H^k}\leqslant CN_k(u)$,  
we conclude that  
\begin{align}
\frac{1}{2}
\frac{d}{dt}
\|V_k\|_{L^2}^2
&\leqslant 
-\frac{\ep}{8}\|\nabla_x^2(\nabla_x^ku_x)\|_{L^2}^2
+C(N_k(u))^2
\label{eq:VVe}
\end{align}
holds on $[0,T_{\ep}^{\star}]$
\par 
Let us now go back 
to the original purpose to derive the uniform estimate for 
$\left\{N_k(u^{\ep})\right\}_{\ep\in (0,1]}$. 
To achieve this,
it remains to consider 
the energy estimate for $\|u_x^{\ep}\|_{H^{k-1}}^2$. 
However, 
by using the integration by parts, the Sobolev embedding, 
and the Cauchy-Schwartz inequality repeatedly, 
we can easily show that    
\begin{align}
\frac{1}{2}
\frac{d}{dt}
\|u_x^{\ep}\|_{H^{k-1}}^2
&\leqslant
-\frac{\ep}{2}\sum_{m=0}^{k-1}
\|\nabla_x^{m+2}u_x^{\ep}\|_{L^2}^2
+ 
C\,(N_k(u^{\ep}))^2.   
\label{eq:k-1}
\end{align} 
Therefore, from \eqref{eq:VVe} and \eqref{eq:k-1}, 
we conclude that 
there exits a positive constant $C$
depending on $a,b,c,k,\lambda, S, \|u_{0x}\|_{H^4}$ 
and not on $\ep$ such that 
\begin{align}
\frac{d}{dt}(N_k(u^{\ep}))^2
&=
\frac{d}{dt}\left(
\|u_x^{\ep}\|_{H^{k-1}}^2
+
\|V_k^{\ep}\|_{L^2}^2
\right)
\leqslant 
C(N_k(u^{\ep}))^2
\nonumber
\end{align}
on the time-interval $[0,T_{\ep}^{\star}]$. 
This implies 
$(N_k(u^{\ep}(t)))^2
\leqslant 
(N_k(u_0))^2
e^{Ct}
$
for any $t\in [0,T_{\ep}^{\star}]$. 
Thus, by the definition of $T_{\ep}^{\star}$, 
there holds  
\begin{align}
4(N_4(u_0))^2
&=
(N_4(u^{\ep}(T_{\ep}^{\star})))^2
\leqslant 
(N_4(u_0))^2
e^{C_4T_{\ep}^{\star}} 
\nonumber
\end{align}
with $C_4>0$ which depends on 
 $a,b,c,\lambda, S, \|u_{0x}\|_{H^4}$ 
and not on $\ep$. 
This shows $e^{C_4T_{\ep}^{\star}}\geqslant 4$ 
and hence 
$T_{\ep}^{\star}\geqslant \log 4/C_4$ holds. 
Therefore, if we set $T=\log 4/C_4$, 
it follows that 
$T_{\ep}^{\star}\geqslant T$ for any $\ep\in (0,1]$ 
and 
$\left\{
N_k(u^{\ep})
\right\}_{\ep\in (0,1]}$   
is bounded in $L^{\infty}(0,T)$. 
\par
As stated before, this shows that $\left\{u_x\right\}_{\ep\in(0,1]}$ 
is bounded in $L^{\infty}(0,T;H^k(\TT;TN))$. 
Hence the standard compactness argument and the compactness of $N$ 
show the existence of a map $u\in C([0,T]\times \TT;N)$ 
and a subsequence $\left\{u^{\ep(j)}\right\}_{j=1}^{\infty}$ of 
$\left\{u^{\ep}\right\}_{\ep\in (0,1]}$
that satisfy 
\begin{alignat}{3}
&u_x^{\ep(j)}\to u_x
\quad
&\text{in}
\quad 
&C([0,T];H^{k-1}(\TT;TN)), 
\nonumber
\\
&u_x^{\ep(j)}\to u_x
\quad
&\text{in}
\quad 
&L^{\infty}(0,T;H^{k}(\TT;TN)) 
\quad
\text{weakly star}
\nonumber
\end{alignat}
as $j\to \infty$, 
and this $u$ is smooth and solves \eqref{eq:pde}-\eqref{eq:data}. 
\par 
Finally, in the general case where 
$u_0\in C(\TT;N)$ and $u_{0x}\in H^k(\TT;TN)$, 
we modify the above argument as follows:  
We take a sequence 
$\left\{u_{0}^i\right\}_{i=1}^{\infty}\subset C^{\infty}(\TT;N)$ 
such that  
\begin{equation}
u_{0x}^i
\to
u_{0x}
\quad 
\text{in}
\quad 
H^{k}(\TT;TN)
\label{eq:dense}
\end{equation}
as $i\to \infty$.  
There exist $T_i=T(\|u_{0x}^i\|_{H^4})>0$
and  
$u^i\in C^{\infty}([0,T_i]\times \TT;N)$ which satisfies 
\eqref{eq:pde} and 
$u^i(0,x)=u^i(x)$ for each $i=1,2,\ldots$, 
since $u_0^i\in C^{\infty}(\TT;N)$. 
Recalling the above argument, it is not difficult to show 
the estimate $T_i^{\star}\geqslant \log 4/C_{4,i}$
where 
$$
T^{\star}_{i}
=
\sup
\left\{
T>0 \ | \ 
N_4(u^{i}(t))\leqslant 2N_4(u_{0}^i)
\quad 
\text{for all}
\quad
t\in[0,T]
\right\}, 
$$
and $C_{4,i}>0$ depends on 
 $a,b,c,\lambda,S, \|u_{0x}^i\|_{H^4}$. 
Note that $C_{4,i}$ depends on $\|u_{0x}^i\|_{H^4}$ continuously. 
This together with \eqref{eq:dense} shows that 
there exists $C_4^{\prime}>0$ depending on 
 $a,b,c,\lambda, S, \|u_{0x}\|_{H^4}$ and not on $i$ 
such that 
$T_i^{\star}\geqslant \log 4/C_{4}^{\prime}$ for 
sufficient large $i$. 
Therefore, if we set $T=\log 4/C_4^{\prime}$,
there exists a sufficiently large $i_0\in \mathbb{N}$ 
such that   
$T_{i}^{\star}\geqslant T$ for any $i\geqslant i_0$ 
and 
$\left\{
N_k(u^{i})
\right\}_{i=i_0}^{\infty}$   
is bounded in $L^{\infty}(0,T)$. 
Therefore, by applying the compactness argument again, 
we can construct the desired 
solution to \eqref{eq:pde}-\eqref{eq:data}. 
This completes the proof. 
\end{proof}
%%%%%%%%%%%%%%%%%%%%%%%%%%%%%%%%%%%%%%
%%%%%%%%%%%%%%%%%%%%%%%%%%%%%%
%%%%%%%%%%%%%%%%%%%%%%
%%%%%%%%%%%%%
%%%%%%%
%%
%
%
%uniqueness
%
\section{Proof of Theorem~\ref{theorem:uniqueness}}
\label{section:proof}
The goal of this section is to complete the proof of
Theorem~\ref{theorem:uniqueness}. 
Throughout this section, it is assumed that $k\geqslant 6$. 
\begin{proof}[Proof of Theorem~\ref{theorem:uniqueness}]
Since $k\geqslant 6\geqslant 4$, 
Theorem~\ref{theorem:existence} established in Section~\ref{section:existence}
guarantees the existence of 
$T=T(\|u_{0x}\|_{H^4(\TT;TN)})>0$
and a map $u\in C([0,T]\times \TT;N)$ 
so that 
$u_x\in L^{\infty}(0,T;H^k(\TT;TN))
\cap
C([0,T];H^{k-1}(\TT;TN))$
and $u$ solves \eqref{eq:pde}-\eqref{eq:data} on the time-interval
$[0,T]$. 
In what follows, we shall concentrate on the proof of 
the uniqueness of the solution. 
Once the uniqueness is established, 
we can easily prove the time-continuity of $\nabla_x^ku_x$ in $L^2$
by the standard argument, which implies 
$u_x\in C([0,T];H^k(\TT;TN))$. 
In this way, the proof of Theorem~\ref{theorem:uniqueness} is completed.
\par 
Let $u,v$ be solutions constructed in Theorem~\ref{theorem:existence}. 
Then 
$u$ and $v$
solve \eqref{eq:pde}-\eqref{eq:data} 
and satisfy  
$u_x, v_x 
\in L^{\infty}(0,T;H^6(\TT;TN))
\cap
C([0,T];H^{5}(\TT;TN))$. 
We shall show $u=v$. 
For this purpose, 
fix $w$ as an isometric embedding of $(N,g)$ into 
some Euclidean space $\RR^d$ 
so that $N$ is considered as a submanifold of $\RR^d$. 
We set 
$U=w{\circ}u$, 
$V=w{\circ}v$, 
$Z=U-V$, 
$\UU=dw_u(\nabla_xu_x)$, 
$\VV=dw_v(\nabla_xv_x)$, 
and $\WW=\UU-\VV$. 
To prove $u=v$, 
it suffices to show $Z=0$. 
The proof of $Z=0$ consists of the following four steps:
\begin{enumerate}
\item[1.] Notations and tools of computations used below.
\item[2.] Analysis of the partial differential equation 
satisfied by $\UU$. 
\item[3.] Classical energy estimates for 
$\|\WW\|_{L^2(\TT;\RR^d)}$ with the loss of derivatives.
\item[4.] Energy estimates for 
$\|\TW\|_{L^2(\TT;\RR^d)}$ (defined later) 
to eliminate the loss of derivatives
\end{enumerate}
\vspace{0.3em}
\par
{\bf 1. Notations and tools of computations used below.}
\\
We state some notations and gather tools of computations 
which will be used below.
\par
The inner product and the norm in $\RR^d$
is expressed by  
$(\cdot,\cdot)$ and $|\cdot|$ respectively.  
The inner product and the norm in $L^2$
for $\RR^d$-valued functions on $\TT$
is expressed
by
$\lr{\cdot}$ 
and 
$\|\cdot\|_{L^2}$ respectively. 
That is, 
for $\phi, \psi:\TT\to \RR^d$, 
$\lr{\phi,\psi}$ and $\|\phi\|_{L^2}$ 
is given by 
$
\lr{\phi,\psi}
=
\int_{\TT}
(\phi(x),\psi(x))
\,dx$ 
and  
$\|\phi\|_{L^2}
=\sqrt{\lr{\phi,\phi}}
$
respectively. 
\vspace{0.3em}
\par 
Let $p\in N$ be a fixed point.  
We consider the orthogonal decomposition  
$
\RR^d
=
dw_p(T_pN)
\oplus
\left(
dw_p(T_pN)
\right)^{\perp}
$, 
where 
$dw_p:T_pN\to T_{w{\circ}p}\RR^d\cong \RR^d$ is 
the differential of $w:N\to \RR^d$ at $p\in N$ 
and 
$\left(
dw_p(T_pN)
\right)^{\perp}$
is the orthogonal complement of 
$dw_p(T_pN)$ in $\RR^d$.     
We denote the orthogonal projection mapping 
of $\RR^d$ onto $dw_p(T_pN)$ by $P(w{\circ}p)$ 
and define $N(w{\circ}p)$ by 
$N(w{\circ}p)=I_d-P(w{\circ}p)$, where 
$I_d$ is the identity mapping on $\RR^d$. 
Moreover, we define $J(w{\circ}p)$  
as an action on $\RR^d$ by first projecting onto 
$dw_p(T_pN)$ 
and then applying the complex structure at $p\in N$. 
More precisely, we define $J(w{\circ}p)$ by 
\begin{align}
J(w{\circ}p)&=
dw_p\circ J_{p}\circ dw_{w{\circ}p}^{-1}\circ P(w{\circ}p). 
\label{eq:J(p)}
\end{align} 
We can extend $P(\cdot)$, $N(\cdot)$, and $J(\cdot)$ 
to a smooth linear operator on $\RR^d$ so that 
$P(q)$, $N(q)$, and $J(q)$ make sense for all $q\in \RR^d$
following the argument in e.g. \cite[pp.17]{NSVZ}. 
Though $J(q)$ is not skew-symmetric and the square is not the minus 
identity in general, 
similar properties hold if $q$ is restricted to $w(N)$.
Indeed, from the definition of $P(\omega{\circ}p)$ and 
$J(\omega{\circ}p)$, it follows that  
\begin{align}
(P(w{\circ}p)Y_1,Y_2)&=(Y_1,P(w{\circ}p)Y_2),  
\label{eq:J0}
\\
(J(w{\circ}p)Y_1,Y_2)&=-(Y_1,J(w{\circ}p)Y_2),   
\label{eq:J(wp)1}
\\
\left(
J(w{\circ}p)
\right)^2Y_3
&=
-P(w{\circ}p)Y_3, 
\label{eq:J(wp)2}
\end{align}
for any $p\in N$ and $Y_1,Y_2\in \RR^d$.
\par  
Let $Y\in \Gamma(u^{-1}TN)$ be fixed. 
For $(t,x)\in [0,T]\times \TT$, 
let $\left\{\nu_3, \ldots, \nu_d\right\}$ denote a smooth local orthonormal 
frame field for the normal bundle $(dw(TN))^{\perp}$
near $U(t,x)=w{\circ}u(t,x)\in w(N)$. 
Recalling that $dw_u(\nabla_xY)$ is the $dw_u(T_uN)$-component 
of $\p_x(dw_u(Y))$, 
we see   
\begin{align}
dw_u(\nabla_xY)
&=
\p_x
\left(
dw_u(Y)
\right)
-
\sum_{k=3}^d
(\p_x(dw_u(Y)), \nu_k(U))\nu_k(U)
\nonumber
\\
&=
\p_x\left(
dw_u(Y)
\right)
+
\sum_{k=3}^d
(dw_u(Y), \p_x\left(\nu_k(U)\right))\nu_k(U)
\nonumber
\\
&=
\p_x\left(
dw_u(Y)
\right)
+
\sum_{k=3}^d
(dw_u(Y), D_k(U)U_x)\nu_k(U)
\nonumber
\\
&=
\p_x\left(
dw_u(Y)
\right)
+
A(U)(dw_u(Y),U_x), 
\label{eq:cox2}
\end{align}
where $D_k=\operatorname{grad} \nu_k$ for $k=3,\dots,d$ 
and $A(q)(\cdot,\cdot)=
\displaystyle\sum_{k=3}^d
(\cdot, D_k(q)\cdot)\nu_k(q)$ 
is the second fundamental form at $q\in w(N)$. 
In the same way, only by replacing $x$ with $t$, 
we see 
\begin{align}
dw_u(\nabla_tY)
&=
\p_t\left(
dw_u(Y)
\right)
+
\sum_{k=3}^d
(dw_u(Y), D_k(U)U_t)\nu_k(U). 
\label{eq:cot2}
\end{align}
\par
The Sobolev embedding and the Gagliardo-Nirenberg inequality 
lead to the equivalence between 
$U_x, V_x\in L^{\infty}(0,T;H^6(\TT;\RR^d))$ 
and $u_x,v_x\in L^{\infty}(0,T;H^6(\TT;TN))$. 
In particular, from the Sobolev embedding 
$H^1(\TT)$ into $C(\TT)$, 
it follows that 
$\p_x^kU_x, \p_x^kV_x\in L^{\infty}((0,T)\times \TT;\RR^d)$ 
for $k=0,1,\ldots,5$, 
which will be used below without any comments.
\par
We next observe some properties related to $\nu_k$ and $D_k$
for $k=3\ldots,d$.  
\begin{lemma}
\label{lemma:nu}
For  each $(t,x)\in [0,T]\times \TT$, 
the following properties hold. 
\begin{align}
J(U)\nu_k(U)&=0,  
\label{eq:J1}
\\
(\nu_k(U), \WW)&=
-(\nu_k(U)-\nu_k(V), \VV)
=\mathcal{O}(|Z|), 
\label{eq:key1}
\\
(\nu_k(U),\p_x\WW)
&=
-(D_k(U)U_x,\WW)
-(D_k(U)Z_x,\VV)
+
\mathcal{O}(|Z|), 
\label{eq:key2}
\\
(\nu_k(U),\p_x^2\WW)
&=
-2\,(D_k(U)U_x,\p_x\WW)
+
\mathcal{O}(|Z|+|Z_x|+|\WW|), 
\label{eq:key22}
\\
(D_k(U)Y_1,Y_2)
&=(Y_1,D_k(U)Y_2) 
\ \ \
\text{for any $Y_1,Y_2:[0,T]\times \TT\to \RR^d$}.
\label{eq:D_k}
\end{align}
\end{lemma}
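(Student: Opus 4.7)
The plan is to prove the five identities in order, with \eqref{eq:key1} as the fundamental identity and \eqref{eq:key2}, \eqref{eq:key22} obtained by differentiating \eqref{eq:key1} once and twice in $x$. The analytic inputs will be (i) smoothness of $w$, $P$, $J$, $\nu_k$, $D_k$, which yields Lipschitz bounds such as $|\nu_k(U) - \nu_k(V)| + |D_k(U) - D_k(V)| \leqslant C|Z|$; (ii) $u_x, v_x \in L^{\infty}(0,T; H^6(\TT; TN))$, which via Sobolev embedding gives $L^{\infty}$ bounds for $\p_x^k U_x$, $\p_x^k V_x$ up to $k=5$ and for $\VV, \p_x\VV, \p_x^2\VV$; and (iii) the extrinsic formula $\UU = U_{xx} + A(U)(U_x, U_x)$ obtained from \eqref{eq:cox2} with $Y = u_x$, which implies the key relation
\begin{equation*}
Z_{xx} = \WW + \bigl[A(V)(V_x, V_x) - A(U)(U_x, U_x)\bigr] = \WW + \mathcal{O}(|Z| + |Z_x|).
\end{equation*}

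\textbf{The direct identities \eqref{eq:J1}, \eqref{eq:key1}, and \eqref{eq:D_k}.} For \eqref{eq:J1} I would observe that $\nu_k(U) \in (dw_u(T_u N))^{\perp}$, hence $P(U)\nu_k(U) = 0$, and the definition \eqref{eq:J(p)} of $J$ gives $J(U)\nu_k(U) = 0$ at once. For \eqref{eq:key1}, since $\UU$ is tangent to $w(N)$ at $U$ and $\VV$ is tangent at $V$, I would use $(\nu_k(U), \UU) = 0 = (\nu_k(V), \VV)$ to rewrite
\begin{equation*}
(\nu_k(U), \WW) = -(\nu_k(U), \VV) + (\nu_k(V), \VV) = -(\nu_k(U) - \nu_k(V), \VV),
\end{equation*}
and then bound the right-hand side by $\mathcal{O}(|Z|)$ using (i) and the boundedness of $\VV$. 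The symmetry \eqref{eq:D_k} I would derive as the standard self-adjointness of the Weingarten map attached to the normal field $\nu_k$: differentiating $(\nu_k, T) \equiv 0$ along two tangent directions on $w(N)$ and using the torsion-free property yields symmetry on tangent vectors, which then extends to arbitrary $Y_1, Y_2 \in \RR^d$ through the natural smooth extension of $D_k$ built from $P$.

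\textbf{Identities \eqref{eq:key2} and \eqref{eq:key22}.} The strategy here is to differentiate the exact equality $(\nu_k(U), \WW) = -(\nu_k(U) - \nu_k(V), \VV)$ in $x$. Using the product rule together with $\p_x \nu_k(U) = D_k(U) U_x$ produces
\begin{equation*}
(D_k(U) U_x, \WW) + (\nu_k(U), \p_x \WW) = -(D_k(U) U_x - D_k(V) V_x, \VV) - (\nu_k(U) - \nu_k(V), \p_x \VV).
\end{equation*}
I would then split $D_k(U)U_x - D_k(V)V_x = D_k(U) Z_x + (D_k(U) - D_k(V)) V_x$, so that everything not involving $Z_x$ gets absorbed via (i)--(ii) into $\mathcal{O}(|Z|)$, yielding \eqref{eq:key2}. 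Differentiating once more in $x$, the left-hand side produces $2(D_k(U)U_x, \p_x\WW) + (\nu_k(U), \p_x^2 \WW)$ up to an $\mathcal{O}(|\WW|)$ error that absorbs terms like $(D_k(U) U_{xx}, \WW)$ and $(D_k'(U)[U_x,U_x], \WW)$; the right-hand side will feature $-(D_k(U) Z_{xx}, \VV)$ as its only new main contribution, with all other new terms bounded in $\mathcal{O}(|Z| + |Z_x|)$ by (i)--(ii). At this point I would invoke (iii) to replace $Z_{xx}$ by $\WW + \mathcal{O}(|Z|+|Z_x|)$, and then \eqref{eq:D_k} to move $D_k(U)$ off $\WW$ onto the bounded $\VV$:
\begin{equation*}
(D_k(U) Z_{xx}, \VV) = (\WW, D_k(U)\VV) + \mathcal{O}(|Z| + |Z_x|) = \mathcal{O}(|\WW| + |Z| + |Z_x|),
\end{equation*}
after which \eqref{eq:key22} drops out by rearrangement.

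\textbf{Main obstacle.} The nontrivial step is handling the second derivative $Z_{xx}$ that appears when differentiating \eqref{eq:key1} twice; $|Z_{xx}|$ is not allowed in the error estimate of \eqref{eq:key22}. The resolution --- and the reason $\WW$ rather than $\p_x Z_x$ is the right object to estimate in the uniqueness proof --- is precisely the extrinsic identity in (iii), which trades $Z_{xx}$ for $\WW$ modulo errors in $|Z|$ and $|Z_x|$. Beyond this, the remaining work is routine bookkeeping of smoothness and Lipschitz estimates coming from the isometric embedding and the Sobolev embedding.
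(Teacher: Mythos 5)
Your proposal is correct and follows essentially the same route as the paper: \eqref{eq:J1} and \eqref{eq:key1} from the definitions and tangency relations $(\nu_k(U),\UU)=(\nu_k(V),\VV)=0$, then \eqref{eq:key2} and \eqref{eq:key22} by differentiating the exact identity in $x$ and splitting $D_k(U)U_x-D_k(V)V_x$, with the crucial replacement of $\p_xZ_x$ by $\WW+\mathcal{O}(|Z|+|Z_x|)$ to avoid $|Z_{xx}|$ in the error of \eqref{eq:key22} (the paper merely says ``take the derivative of \eqref{eq:key2}'' here, and your filled-in details match). The paper omits the proof of \eqref{eq:D_k} by citing an earlier reference, and your Weingarten-map sketch is the standard argument it relies on.
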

\begin{remark}
In particular, 
in view of \eqref{eq:key1}, 
we find 
(see the argument to show \eqref{eq:18e} in the third step)
that the term including $\p_x^2\WW$ or $\p_x\WW$ 
can be handled as a harmless term if the vector part  
is described by $\nu_k(U)$ with some $k=3,\ldots,d$.
This is related to the reason 
why we choose $dw_u(\nabla_xu_x)-dw_v(\nabla_xv_x)$ as $\WW$.  
\end{remark}
\begin{proof}[Proof of Lemma~\ref{lemma:nu}] 
First, \eqref{eq:J1} is a direct consequence of the definition of 
$J(U)$ 
and the orthogonality  
$\nu_k(U)\perp dw_u(T_uN)$. 
Next, 
in view of  
$(\nu_k(U),\UU)=(\nu_k(V),\VV)=0$, 
we have 
\begin{align}
(\nu_k(U),\WW)
%&
=
(\nu_k(U),\UU-\VV)
%\nonumber
%\\
%&
=
-(\nu_k(U),\VV)
%\nonumber
%\\
%&
=
-(\nu_k(U)-\nu_k(V),\VV),
\nonumber
\end{align}
which shows \eqref{eq:key1}. 
Moreover, by taking the derivative of \eqref{eq:key1} in $x$, 
we have 
\begin{align}
&(\nu_k(U),\p_x\WW)
\nonumber
\\
&=
\p_x\left\{
(\nu_k(U),\WW)
\right\}
-(\p_x\left\{
\nu_k(U)
\right\},\WW)
\nonumber
\\
&=
-(\p_x\left\{
\nu_k(U)-\nu_k(V)
\right\}, \VV)
-(\nu_k(U)-\nu_k(V),\p_x\VV)
-(D_k(U)U_x,\WW)
\nonumber
\\
&=
-(D_k(U)U_x-D_k(V)V_x, \VV)
-(\nu_k(U)-\nu_k(V),\p_x\VV)
-(D_k(U)U_x,\WW)
\nonumber
\\
&=
-(D_k(U)Z_x-(D_k(U)-D_k(V))V_x, \VV)
-(\nu_k(U)-\nu_k(V),\p_x\VV)
-(D_k(U)U_x,\WW)
\nonumber
\\
&=-(D_k(U)U_x,\WW)-(D_k(U)Z_x, \VV)
+\mathcal{O}(|Z|), 
\nonumber
\end{align}
which shows \eqref{eq:key2}. 
We obtain \eqref{eq:key22}, 
by taking the derivative of \eqref{eq:key2} in $x$.  
We omit the proof of \eqref{eq:D_k}, since it has been proved 
in \cite[pp.912]{onodera1}. 
\end{proof}
\par 
The following lemma comes from the K\"ahler condition on $(N,J,g)$. 
\begin{lemma}
\label{lemma:kaehler}
(i):  
For any $Y\in \Gamma(u^{-1}TN)$, 
\begin{align}
\p_x(J(U))dw_u(Y)
&=
\sum_{k=3}^d
\left(
dw_u(Y), J(U)D_k(U)U_x
\right)
\nu_k(U).
\label{eq:kaehler1}
\end{align}
(ii): 
For any $Y:[0,T]\times \TT\to \RR^d$, 
\begin{align}
\p_x(J(U))Y
&=
\sum_{k=3}^d
\left(Y,J(U)D_k(U)U_x\right)\nu_k(U)
-\sum_{k=3}^d
(Y,\nu_k(U))J(U)D_k(U)U_x.
\label{eq:kaehler2}
\end{align}
\end{lemma}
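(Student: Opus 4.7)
The plan is to prove (i) by differentiating the pointwise identity $J(U)\,dw_u(Y) = dw_u(J_uY)$ in $x$ and then to deduce (ii) from (i) by orthogonally decomposing an arbitrary vector into its tangential and normal components with respect to $dw_u(T_uN)$. The essential inputs are the extrinsic formula \eqref{eq:cox2}, the K\"ahler identity $\nabla_xJ_u = J_u\nabla_x$, and the annihilation property $J(U)\nu_k(U) = 0$ from \eqref{eq:J1}.

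For part (i), the identity $J(U)\,dw_u(Y) = dw_u(J_uY)$ is immediate from the definition \eqref{eq:J(p)} of $J(\cdot)$ together with $P(U)dw_u(Y) = dw_u(Y)$. Differentiating in $x$ gives
\[
\p_x(J(U))\,dw_u(Y) + J(U)\,\p_x(dw_u(Y)) = \p_x(dw_u(J_uY))
\]
on the left, while on the right I would apply \eqref{eq:cox2} with $Y$ replaced by $J_uY$, invoke the K\"ahler identity $\nabla_x(J_uY) = J_u\nabla_xY$, use $dw_u(J_u\nabla_xY) = J(U)\,dw_u(\nabla_xY)$, and then apply \eqref{eq:cox2} once more to $Y$. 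Because the normal correction $A(U)(dw_u(Y),U_x)$ is annihilated by $J(U)$ thanks to \eqref{eq:J1}, the right hand side simplifies to $J(U)\,\p_x(dw_u(Y)) - A(U)(J(U)dw_u(Y),U_x)$. Cancelling the common term and expanding the second fundamental form as $A(U)(\cdot,U_x) = \sum_{k=3}^d(\cdot,D_k(U)U_x)\nu_k(U)$, the skew-symmetry \eqref{eq:J(wp)1} of $J(U)$ inside the remaining inner product flips the sign and produces exactly \eqref{eq:kaehler1}.

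For part (ii), given any $Y\colon [0,T]\times \TT\to\RR^d$, decompose $Y = P(U)Y + N(U)Y$ with $N(U)Y = \sum_{k=3}^d (Y,\nu_k(U))\nu_k(U)$. Since $P(U)Y\in dw_u(T_uN)$ pointwise, write $P(U)Y = dw_u(\widetilde Y)$ for a uniquely defined $\widetilde Y\in\Gamma(u^{-1}TN)$ and apply (i); then because $J(U)D_k(U)U_x\in dw_u(T_uN)$, the pairing $(dw_u(\widetilde Y),J(U)D_k(U)U_x)$ coincides with $(Y,J(U)D_k(U)U_x)$, yielding the first sum in \eqref{eq:kaehler2}. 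For the normal contribution, differentiating the identity $J(U)\nu_k(U) = 0$ in $x$ via the chain rule $\p_x\nu_k(U) = D_k(U)U_x$ gives $\p_x(J(U))\nu_k(U) = -J(U)D_k(U)U_x$, and summing against the coefficients $(Y,\nu_k(U))$ produces the second sum in \eqref{eq:kaehler2}.

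The only delicate bookkeeping is the cancellation in (i): two second-fundamental-form terms arise from the two invocations of \eqref{eq:cox2}, but the one pre-multiplied by $J(U)$ vanishes identically by \eqref{eq:J1}, so only a single term $-A(U)(J(U)dw_u(Y),U_x)$ survives. Once this observation is made, the skew-symmetry of $J(U)$ completes (i), and part (ii) is then essentially mechanical.
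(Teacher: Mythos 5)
Your proof is correct and follows essentially the same route as the paper: part (i) is the paper's computation of $dw_u(\nabla_xJ_uY)=dw_u(J_u\nabla_xY)$ via \eqref{eq:cox2}, \eqref{eq:J1}, and the skew-symmetry \eqref{eq:J(wp)1}, merely rearranged as a differentiation of $J(U)dw_u(Y)=dw_u(J_uY)$; part (ii) uses the identical tangential/normal decomposition with $\p_x(J(U))\nu_k(U)=-J(U)D_k(U)U_x$. The one delicate point you flag — that the $J(U)$-premultiplied second-fundamental-form term vanishes by \eqref{eq:J1} — is exactly the cancellation the paper relies on.
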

\begin{remark}
Using \eqref{eq:kaehler2} combined with \eqref{eq:key1},  
we can handle the term $\p_x(J(U))\p_xW$ as a harmless
term in the energy estimate for $\WW$ in $L^2$. 
\end{remark}
\begin{proof}[Proof of Lemma~\ref{lemma:kaehler}]
First we show (i). For $Y\in \Gamma(u^{-1}TN)$, 
the K\"ahler condition on $(N,J,g)$ implies 
$\nabla_xJ_uY=J_u\nabla_xY$. 
Hence there holds  
\begin{equation}
dw_u(\nabla_xJ_uY)=dw_u(J_u\nabla_xY). 
\label{eq:kae}
\end{equation}
From \eqref{eq:cox2} and \eqref{eq:J1}, 
the RHS 
of \eqref{eq:kae} satisfies
\begin{align}
dw_u(J_u\nabla_xY)
&
=J(U)dw_u(\nabla_xY)
=
J(U)
\p_x(dw_u(Y)).
\label{eq:kae2}
\end{align}
On the other hand, from \eqref{eq:cox2}, 
the left hand side of \eqref{eq:kae} satisfies
\begin{align}
dw_u(\nabla_xJ_uY)
&=
\p_x\left\{dw_u(J_uY)\right\}
+
\sum_{k=3}^d
\left(
dw_u(J_uY), D_k(U)U_x
\right)
\nu_k(U)
\nonumber
\\
&=
\p_x\left\{J(U)dw_u(Y)\right\}
+
\sum_{k=3}^d
\left(
J(U)dw_u(Y), D_k(U)U_x
\right)
\nu_k(U)
\nonumber
\\
&=
\p_x(J(U))dw_u(Y)
+
J(U)\p_x(dw_u(Y))
\nonumber
\\
&\quad
+
\sum_{k=3}^d
\left(
J(U)dw_u(Y), D_k(U)U_x
\right)
\nu_k(U).
\label{eq:kae3}
\end{align}
By substituting \eqref{eq:kae2} and \eqref{eq:kae3} into 
\eqref{eq:kae}, and by using \eqref{eq:J(wp)1}, 
we have 
\begin{align}
\p_x(J(U))dw_u(Y)
&=
-\sum_{k=3}^d
\left(
J(U)dw_u(Y), D_k(U)U_x
\right)
\nu_k(U)
\nonumber
\\
&=
\sum_{k=3}^d
\left(
dw_u(Y), J(U)D_k(U)U_x
\right)
\nu_k(U), 
\nonumber
\end{align}
which shows \eqref{eq:kaehler1}. 
Next we show (ii). 
Decomposing $Y=P(U)Y+N(U)Y$ where 
$P(U)Y\in dw(T_uN)$ and 
$N(U)Y\in (dw(T_uN))^{\perp}$ for each $(t,x)$, 
we have 
\begin{align}
\p_x(J(U))Y
&=\p_x(J(U))P(U)Y
+
\p_x(J(U))N(U)Y.
\label{eq:kae4}
\end{align}
By using \eqref{eq:kaehler1} and by noting 
that $N(U)Y$ is perpendicular to  $J(U)D_k(U)U_x$, 
we find that 
the first term of the RHS of 
\eqref{eq:kae4} satisfies 
\begin{align}
\p_x(J(U))P(U)Y
&=
\sum_{k=3}^d
\left(
P(U)Y, J(U)D_k(U)U_x
\right)
\nu_k(U)
\nonumber
\\
&=
\sum_{k=3}^d
\left(
Y, J(U)D_k(U)U_x
\right)
\nu_k(U).
\label{eq:kae5}
\end{align}
Moreover, since
\begin{equation}
\p_x(J(U))\nu_k(U)
=
\p_x(J(U)\nu_k(U))
-J(U)\p_x(\nu_k(U))
=
-J(U)D_k(U)U_x
\label{eq:J2}
\end{equation}
follows from \eqref{eq:J1}, 
the second term of the RHS of \eqref{eq:kae4} satisfies 
\begin{align}
\p_x(J(U))N(U)Y
&=
\sum_{k=3}^d(Y,\nu_k(U))\p_x(J(U))\nu_k(U)
\nonumber
\\
&=
-\sum_{k=3}^d(Y,\nu_k(U))
J(U)D_k(U)U_x.
\label{eq:kae6}
\end{align}
Substituting \eqref{eq:kae5} and \eqref{eq:kae6} into \eqref{eq:kae4}, 
we obtain \eqref{eq:kaehler2}. 
\end{proof}
\par 
As in the proof of Theorem~\ref{theorem:existence}, 
we denote the sectional curvature on $(N,g)$ by $S$ 
which is supposed to be a constant. 
Recall that the Riemann curvature tensor $R$ is expressed by  
\begin{align}
R(Y_1,Y_2)Y_3
&=
S\left\{
g(Y_2,Y_3)Y_1
-
g(Y_1,Y_3)Y_2
\right\}
\label{eq:curvature}
\end{align}
for any $Y_1,Y_2,Y_3\in \Gamma(u^{-1}TN)$.  
The next lemma comes from \eqref{eq:curvature}.
\begin{lemma}
\label{lemma:curvature3}
For any $Y_1,Y_2,Y_3\in \Gamma(u^{-1}TN)$, 
\begin{align}
&dw_u
\left(
R(Y_1,Y_2)Y_3
\right)
=
\sum_k
\left(
dw_u(Y_3), D_k(U)dw_u(Y_2)
\right)
P(U)D_k(U)dw_u(Y_1)
\nonumber
\\
&\phantom{dw_u
\left(
R(Y_1,Y_2)Y_3
\right)}
\qquad 
-
\sum_k
\left(
dw_u(Y_3), D_k(U)dw_u(Y_1)
\right)
P(U)D_k(U)dw_u(Y_2),
\label{eq:curvature2}
\\
&\sum_k
\left(
dw_u(Y_3), D_k(U)dw_u(Y_2)
\right)
P(U)D_k(U)dw_u(Y_1)
\nonumber
\\
&\qquad 
-
\sum_k
\left(
dw_u(Y_3), D_k(U)dw_u(Y_1)
\right)
P(U)D_k(U)dw_u(Y_2)
\nonumber
\\
&=
S\,\left\{
(dw_u(Y_3), dw_u(Y_2))dw_u(Y_1)
-(dw_u(Y_3), dw_u(Y_1))dw_u(Y_2)
\right\}
\label{eq:curvature3}
\end{align}
\end{lemma}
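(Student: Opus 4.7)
The plan is to establish \eqref{eq:curvature2} as a form of the Gauss equation for the isometric embedding $w:(N,g)\hookrightarrow \RR^d$, and then to derive \eqref{eq:curvature3} by matching \eqref{eq:curvature2} with the constant-sectional-curvature formula \eqref{eq:curvature} transported to $\RR^d$ via the isometry $dw_u$.

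For \eqref{eq:curvature2}, I would extend $Y_1,Y_2,Y_3$ to vector fields in a neighbourhood on $N$ and use the standard definition $R(Y_1,Y_2)Y_3=\nabla_{Y_1}\nabla_{Y_2}Y_3-\nabla_{Y_2}\nabla_{Y_1}Y_3-\nabla_{[Y_1,Y_2]}Y_3$. Iterating the Gauss formula \eqref{eq:cox2} to pass each covariant derivative through $dw_u$ produces expressions involving ambient Euclidean directional derivatives of $dw_u(Y_j)$ together with correction terms built from $D_k(U)$ and $\nu_k(U)$. Since the ambient Euclidean connection is flat, the identity $\p_{Y_1}\p_{Y_2}-\p_{Y_2}\p_{Y_1}-\p_{[Y_1,Y_2]}=0$ eliminates the pure Euclidean part; projecting the remaining identity onto the tangent bundle via $P(U)$ then kills the normal pieces. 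The surviving cross terms rearrange, using the symmetry \eqref{eq:D_k} of $D_k(U)$ and the orthogonality $\nu_k(U)\perp dw_u(T_uN)$, into exactly the right-hand side of \eqref{eq:curvature2}.

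For \eqref{eq:curvature3}, I would apply $dw_u$ to both sides of \eqref{eq:curvature}. Since $dw_u$ is a linear isometry, the identity $g(Y_i,Y_j)=(dw_u(Y_i),dw_u(Y_j))$ converts the Riemannian inner products into Euclidean ones, giving
\begin{equation*}
dw_u(R(Y_1,Y_2)Y_3)=S\{(dw_u(Y_2),dw_u(Y_3))dw_u(Y_1)-(dw_u(Y_1),dw_u(Y_3))dw_u(Y_2)\}.
\end{equation*}
Substituting \eqref{eq:curvature2} for the left-hand side then yields \eqref{eq:curvature3}.

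The main technical obstacle will be the bookkeeping inside \eqref{eq:curvature2}: two nested applications of \eqref{eq:cox2} generate several auxiliary terms involving $\p(\nu_k(U))$ and $\p(D_k(U))$, and one must verify that every such term either lies in the normal bundle (and is therefore killed by $P(U)$) or cancels pairwise when the commutator $\nabla_{Y_1}\nabla_{Y_2}-\nabla_{Y_2}\nabla_{Y_1}$ is formed. Once \eqref{eq:curvature2} is established, \eqref{eq:curvature3} is essentially a one-line consequence of the isometry property of $dw_u$ combined with the constant-curvature hypothesis.
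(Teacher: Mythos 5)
Your proposal is correct and follows essentially the same route as the paper: part \eqref{eq:curvature2} is the vector form of the Gauss equation obtained by iterating \eqref{eq:cox2} and using flatness of $\RR^d$, and \eqref{eq:curvature3} then follows by applying the isometry $dw_u$ to the constant-curvature formula \eqref{eq:curvature} and comparing. The only cosmetic difference is that the paper computes the commutator along a two-parameter family $\gamma(s,\sigma)$ (so the bracket term is absent automatically) rather than extending $Y_1,Y_2,Y_3$ to local vector fields on $N$, but the underlying argument is the same.
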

\begin{proof}[Proof of Lemma~\ref{lemma:curvature3}]
We can understand that \eqref{eq:curvature2} is a kind of the expression of 
the Gauss-Codazzi formula in Riemannian geometry. 
Fix $(t,x)\in [0,T]\times \TT$. 
We take a two-parameterized smooth map   
$\gamma=\gamma(s,\sigma):(-\delta,\delta)\times (-\delta,\delta)\to N$ 
with sufficiently small 
$\delta>0$, 
and a $Y_4\in \Gamma(\gamma^{-1}TN)$ 
so that 
$\gamma(0,0)=u(t,x)$, 
$\gamma_s(0,0)=Y_1(t,x)$, 
$\gamma_\sigma(0,0)=Y_2(t,x)$, 
and 
$Y_4(0,0)=Y_3(t,x)$.
Since $R(\gamma_s,\gamma_\sigma)Y_4=\nabla_s\nabla_\sigma Y_4-\nabla_\sigma\nabla_sY_4$, 
we deduce 
\begin{align}
dw_{\gamma}
\left(
R(\gamma_s,\gamma_\sigma)Y_4
\right)
&=
dw_{\gamma}
\left(
\nabla_s\nabla_\sigma Y_4
\right)
-
dw_{\gamma}
\left(
\nabla_\sigma\nabla_sY_4
\right)
\nonumber
\\
&=
P(w{\circ}\gamma)
\p_s\left(
dw_{\gamma}(\nabla_\sigma Y_4)
\right)
-
P(w{\circ}\gamma)
\p_\sigma\left(
dw_{\gamma}(\nabla_sY_4)
\right)
\nonumber
\\
&=
P(w{\circ}\gamma)
\left\{
\p_s\left(
dw_{\gamma}(\nabla_\sigma Y_4)
\right)
-
\p_\sigma\left(
dw_{\gamma}(\nabla_sY_4)
\right)
\right\}.
\label{eq:muda}
\end{align}
Similarly to \eqref{eq:cox2} or \eqref{eq:cot2}, 
the definition of the covariant derivatives yields
\begin{align}
\p_s\left(
dw_{\gamma}(\nabla_\sigma Y_4)
\right)
&=
\p_s\left(
\p_\sigma(dw_{\gamma}(Y_4))
+
\sum_{k=3}^d
\left(
dw_{\gamma}(Y_4), D_k(w{\circ}\gamma)\p_\sigma(w{\circ}\gamma)
\right)
\nu_k(w{\circ}\gamma)
\right)
\nonumber
\\
&=
\p_s\p_\sigma(dw_{\gamma}(Y_4))
+
\sum_{k=3}^d
\p_s
\left\{
\left(
dw_{\gamma}(Y_4), D_k(w{\circ}\gamma)\p_\sigma(w{\circ}\gamma)
\right)
\right\}
\nu_k(w{\circ}\gamma)
\nonumber
\\
&\quad
+
\sum_{k=3}^d
\left(
dw_{\gamma}(Y_4), D_k(w{\circ}\gamma)\p_\sigma(w{\circ}\gamma)
\right)
D_k(w{\circ}\gamma)
\p_s(w{\circ}\gamma),
\label{eq:muda2}
\end{align}
and 
\begin{align}
\p_\sigma\left(
dw_{\gamma}(\nabla_sY_4)
\right)
&=
\p_\sigma\p_s(dw_{\gamma}(Y_4))
+
\sum_{k=3}^d
\p_\sigma
\left\{
\left(
dw_{\gamma}(Y_4), D_k(w{\circ}\gamma)\p_s(w{\circ}\gamma)
\right)
\right\}
\nu_k(w{\circ}\gamma)
\nonumber
\\
&\quad
+
\sum_{k=3}^d
\left(
dw_{\gamma}(Y_4), D_k(w{\circ}\gamma)\p_s(w{\circ}\gamma)
\right)
D_k(w{\circ}\gamma)
\p_\sigma(w{\circ}\gamma).
\label{eq:muda3}
\end{align}
By substituting \eqref{eq:muda2} and \eqref{eq:muda3} 
into \eqref{eq:muda}, and by noting 
$P(w{\circ}\gamma)\nu_k(w{\circ}\gamma)=0$, 
we have 
\begin{align}
dw_{\gamma}
\left(
R(\gamma_s,\gamma_\sigma)Y_4
\right)
&=
\sum_{k=3}^d
\left(
dw_{\gamma}(Y_4), D_k(w{\circ}\gamma)\p_\sigma(w{\circ}\gamma)
\right)
P(w{\circ}\gamma)
D_k(w{\circ}\gamma)
\p_s(w{\circ}\gamma)
\nonumber
\\
&\quad 
-
\sum_{k=3}^d
\left(
dw_{\gamma}(Y_4), D_k(w{\circ}\gamma)\p_s(w{\circ}\gamma)
\right)
P(w{\circ}\gamma)
D_k(w{\circ}\gamma)
\p_\sigma(w{\circ}\gamma).
\nonumber
\end{align}
Thus, by taking the limit $(s,\sigma)\to (0,0)$, 
we obtain 
\begin{align}
dw_{u}
\left(
R(Y_1,Y_2)Y_3
\right)
&=
\sum_{k=3}^d
\left(
dw_{u}(Y_3), D_k(w{\circ}u)dw_u(Y_2)
\right)
P(w{\circ}u)
D_k(w{\circ}u)
dw_u(Y_1)
\nonumber
\\
&\quad 
-
\sum_{k=3}^d
\left(
dw_{u}(Y_3), D_k(w{\circ}u)dw_u(Y_1)
\right)
P(w{\circ}u)
D_k(w{\circ}u)
dw_u(Y_2) 
\nonumber
\end{align}
for each $(t,x)$. 
This implies \eqref{eq:curvature2}. 
Noting that $w:(N,g)\to (\RR^d,(\cdot,\cdot))$ is isometric,  
we see that 
\eqref{eq:curvature3} follows from 
\eqref{eq:curvature} and  \eqref{eq:curvature2}.
\end{proof}
\par 
Next properties come from the assumption that 
$N$ is a two-dimensional real manifold. 
Noting that $\left\{
\frac{U_x}{|U_x|}, \frac{J(U)U_x}{|U_x|}, \nu_3(U), \ldots, \nu_d(U)
\right\}$
forms an orthonormal basis of $\RR^d$ 
for each $(t,x)$ where $U_x(t,x)\ne 0$, 
we see 
\begin{equation}
|U_x|^2Y
=
(Y,U_x)U_x
+
(Y,J(U)U_x)J(U)U_x
+
\sum_{k=3}^d
(|U_x|^2Y,\nu_k(U))\nu_k(U)
\label{eq:frame_uniqueness}
\end{equation}
holds for every $(t,x)$. 
Note also that \eqref{eq:frame_uniqueness} is valid also for $(t,x)$ 
where $U_x(t,x)=0$, as 
each of both sides of \eqref{eq:frame_uniqueness} vanishes. 
Using \eqref{eq:frame_uniqueness} with 
$J(U)Y$ instead of $Y$, we have 
\begin{align}
|U_x|^2J(U)Y
&=
(J(U)Y,U_x)U_x
+
(J(U)Y,J(U)U_x)J(U)U_x
\nonumber
\\
&\quad
+
\sum_{k=3}^d
(|U_x|^2J(U)Y,\nu_k(U))\nu_k(U)
\nonumber
\\
&=
-(Y,J(U)U_x)U_x
+(Y,U_x)J(U)U_x.
\label{eq:k1}
\end{align}
Moreover, we introduce 
$T_2(U), \ldots, T_5(U):[0,T]\times \TT\to \RR^d$ 
defined by the following.
\begin{definition}
\label{definition:symmetry_uniqueness}
For any $Y:[0,T]\times \TT\to \RR^d$,  
\begin{align}
T_2(U)Y
&=
\frac{1}{2}|U_x|^2J(U)Y, 
\label{eq:T_2(U)}
\\
T_3(U)Y
&=
\frac{1}{2}
\biggl\{
(Y,\p_xU_x)J(U)U_x
+
(Y,U_x)J(U)\p_xU_x
\nonumber
\\
&\qquad\quad
+
(Y,J(U)\p_xU_x)U_x
+
(Y,J(U)U_x)\p_xU_x
\biggr\},
\label{eq:T_3(U)}
\\
T_4(U)Y
&=
\left(
Y,\p_xU_x+\sum_{k=3}^d(U_x,D_k(U)U_x)\nu_k(U)
\right)
J(U)U_x
-
(Y,U_x)J(U)\p_xU_x, 
\label{eq:T_4(U)}
\\
T_5(U)Y
&=
\frac{1}{2}
\biggl\{
(Y,\p_xU_x)J(U)U_x
+
(Y,U_x)J(U)\p_xU_x
\nonumber
\\
&\qquad\quad
-
(Y,J(U)\p_xU_x)U_x
-
(Y,J(U)U_x)\p_xU_x
\biggr\}.
\label{eq:T_5(U)}
\end{align} 
\end{definition}
We use \eqref{eq:frame_uniqueness} or \eqref{eq:k1} to show the following. 
\begin{lemma}
\label{lemma:symmetry_uniqueness}
For any 
$Y,Y_1,Y_2:[0,T]\times \TT\to \RR^d$, 
it follows that 
\begin{align}
T_2(U)Y
&=
\frac{1}{2}
\left\{
(Y,U_x)J(U)U_x
-(Y,J(U)U_x)U_x
\right\}, 
\label{eq:tae1}
\\
\p_x(T_2(U))Y
&=
(\p_xU_x,U_x)J(U)Y
+
\frac{1}{2}
|U_x|^2
\p_x(J(U))Y, 
\label{eq:tae2}
\\
\p_x(T_2(U))Y
&=
T_5(U)Y
-\frac{1}{2}
(Y,U_x)\sum_{k=3}^d(J(U)U_x,D_k(U)U_x)\nu_k(U)
\nonumber
\\
&\quad 
+\frac{1}{2}
\sum_{k=3}^d(Y,\nu_k(U))(J(U)U_x,D_k(U)U_x)U_x, 
\label{eq:tae3}
\\
(T_3(U)Y_1, Y_2)
&=
(Y_1,T_3(U)Y_2),  
\label{eq:tae4}
\\
(T_4(U)Y_1, Y_2)
&=
(Y_1,T_4(U)Y_2).
\label{eq:tae5}
\end{align}
\end{lemma}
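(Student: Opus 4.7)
The plan is to verify the five identities in order, exploiting two-dimensionality of $N$ via \eqref{eq:frame_uniqueness} and \eqref{eq:k1} together with Lemma~\ref{lemma:kaehler}(ii) and the isometric embedding identity $\UU = \p_xU_x + \sum_{k=3}^d(U_x, D_k(U)U_x)\nu_k(U)$ that comes from \eqref{eq:cox2} applied to $Y=u_x$. Identity \eqref{eq:tae1} is immediate: substituting \eqref{eq:k1} into the definition $T_2(U)Y = \tfrac{1}{2}|U_x|^2 J(U)Y$ produces precisely the stated form. Identity \eqref{eq:tae2} then follows by the Leibniz rule, using $\p_x|U_x|^2 = 2(\p_xU_x, U_x)$.

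For \eqref{eq:tae3}, I would differentiate \eqref{eq:tae1} directly in $x$ and collect the six resulting terms. Four of them assemble at once into $T_5(U)Y$ as defined in \eqref{eq:T_5(U)}, leaving the remainder $\tfrac{1}{2}\{(Y, U_x)\p_x(J(U))U_x - (Y, \p_x(J(U))U_x)U_x\}$. The key step is to evaluate $\p_x(J(U))U_x$ using Lemma~\ref{lemma:kaehler}(ii) with $Y=U_x$: tangency $(U_x, \nu_k(U))=0$ kills the second summand and \eqref{eq:J(wp)1} rewrites the first, yielding $\p_x(J(U))U_x = -\sum_k(J(U)U_x, D_k(U)U_x)\nu_k(U)$. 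Substituting this into the remainder gives exactly the two correction terms on the right-hand side of \eqref{eq:tae3}. Identity \eqref{eq:tae4} is essentially tautological: the four summands in \eqref{eq:T_3(U)} come in two involutive pairs drawn from $\{\p_xU_x, J(U)U_x\}$ and $\{U_x, J(U)\p_xU_x\}$, so expanding $(T_3(U)Y_1, Y_2)$ and $(Y_1, T_3(U)Y_2)$ term by term and relabeling reveals that both reduce to the same sum.

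The main obstacle is the symmetry \eqref{eq:tae5}, since the defining expression \eqref{eq:T_4(U)} is not manifestly self-adjoint. The plan is first to observe that the bracketed quantity in \eqref{eq:T_4(U)} equals the tangent vector $\UU$, and that $J(U)\p_xU_x = J(U)\UU$ by \eqref{eq:J1}, so that $T_4(U)Y = (Y, \UU)J(U)U_x - (Y, U_x)J(U)\UU$. Multiplying the obstruction $(T_4(U)Y_1, Y_2) - (Y_1, T_4(U)Y_2)$ through by $|U_x|^2$, I would use \eqref{eq:k1} applied to $Y=\UU$ to expand
\[
|U_x|^2 J(U)\UU = -(\UU, J(U)U_x)U_x + (\UU, U_x)J(U)U_x,
\]
and apply \eqref{eq:frame_uniqueness} with $Y=\UU$ (noting that $\UU$ has no normal component) to obtain
\[
|U_x|^2(\UU, Y_i) = (\UU, U_x)(Y_i, U_x) + (\UU, J(U)U_x)(Y_i, J(U)U_x).
\]
Substituting these into the obstruction and collecting by the scalar coefficients $(\UU, U_x)$ and $(\UU, J(U)U_x)$, all eight bilinear terms in $(Y_1, Y_2)$ cancel in pairs, so the obstruction vanishes wherever $U_x\ne 0$. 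At points where $U_x=0$, the formula \eqref{eq:T_4(U)} itself gives $T_4(U)Y=0$ for every $Y$, so both sides of \eqref{eq:tae5} vanish pointwise and the identity holds there as well.
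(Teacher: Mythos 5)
Your proof is correct, and for \eqref{eq:tae1}--\eqref{eq:tae4} it coincides with the paper's argument: differentiate the two equivalent expressions for $T_2(U)Y$ via $\p_x(T_2(U))Y=\p_x\{T_2(U)Y\}-T_2(U)\p_xY$, use Lemma~\ref{lemma:kaehler}(ii) with $Y=U_x$ to identify $\p_x(J(U))U_x$ as a purely normal vector, and read off the manifest symmetry of $T_3(U)$. The one place you genuinely diverge is \eqref{eq:tae5}. The paper does not establish the symmetry of $T_4(U)$ from scratch in $\RR^d$: it observes that $T_4(U)Y_1=dw_u\left\{g(\Xi_1,\nabla_xu_x)J_uu_x-g(\Xi_1,u_x)J_u\nabla_xu_x\right\}=dw_u(A_2\Xi_1)$ for the tangent field $\Xi_1$ with $dw_u(\Xi_1)=P(U)Y_1$, so that $(T_4(U)Y_1,Y_2)=g(A_2\Xi_1,\Xi_2)$, and then quotes the already-proved Proposition~\ref{proposition:gsym}. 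You instead run the two-dimensionality argument extrinsically: after rewriting $T_4(U)Y=(Y,\UU)J(U)U_x-(Y,U_x)J(U)\UU$ via $\UU=\p_xU_x+\sum_k(U_x,D_k(U)U_x)\nu_k(U)$ and $J(U)\nu_k(U)=0$, you multiply the defect of symmetry by $|U_x|^2$, expand $|U_x|^2\UU$ and $|U_x|^2J(U)\UU$ in the frame $\{U_x/|U_x|,J(U)U_x/|U_x|,\nu_3(U),\dots\}$ using \eqref{eq:frame_uniqueness} and \eqref{eq:k1} (the normal component of $\UU$ vanishing), and let the eight resulting terms cancel in pairs; the degenerate points where $U_x=0$ are handled separately, exactly as in the paper's treatment of \eqref{eq:gsym2}. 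I checked the cancellation: grouping by the scalars $(\UU,U_x)$ and $(\UU,J(U)U_x)$, the four pairs do cancel, and a single factor of $|U_x|^2$ suffices because each of the four products contains exactly one factor involving $\UU$. Your route is self-contained in the ambient space and avoids introducing the pullback fields $\Xi_i$, at the cost of repeating, in extrinsic notation, essentially the same computation the paper already performed once intrinsically for $A_2$; the paper's route buys economy by reusing Proposition~\ref{proposition:gsym}, yours buys independence from it.
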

\begin{proof}[Proof of Lemma~\ref{lemma:symmetry_uniqueness}]
First, \eqref{eq:tae1} is a direct consequence of 
\eqref{eq:k1}. 
Second, \eqref{eq:tae2} follows from 
substituting \eqref{eq:T_2(U)} into 
$\p_x(T_2(U))Y
=
\p_x\left\{
T_2(U)Y
\right\}
-T_2(U)\p_xY$.
Third, by substituting \eqref{eq:tae1} into 
$\p_x(T_2(U))Y=\p_x\left\{T_2(U)Y\right\}
-T_2(U)\p_xY$ and by using \eqref{eq:T_5(U)}, 
we have 
\begin{align}
\p_x(T_2(U))Y
&=
T_5(U)Y
+
\frac{1}{2}
(Y,U_x)\p_x(J(U))U_x
-\frac{1}{2}
(Y,\p_x(J(U))U_x)U_x.
\label{eq:iran21}
\end{align}
Recall here that \eqref{eq:kaehler2} yields 
$\p_x(J(U))U_x
=
\displaystyle\sum_{k=3}^d
(U_x,J(U)D_k(U)U_x)\nu_k(U)$.
Substituting this into the RHS of \eqref{eq:iran21}, 
we get \eqref{eq:tae3}. 
Next, in view of \eqref{eq:T_3(U)}, 
it is immediate that 
\eqref{eq:tae4} holds. 
Finally we show \eqref{eq:tae5}. 
The proof is reduced to that of \eqref{eq:gsym} with $i=2$. 
Noting that there exists $\Xi_i\in \Gamma(u^{-1}TN)$ such that 
$dw_u(\Xi_i)=P(U)Y_i$ for each $i=1,2$, 
we have 
\begin{align}
T_4(U)Y_1
&=
(Y_1,dw_u(\nabla_xu_x))dw_u(J_uu_x)
-(Y_1,dw_u(u_x))dw_u(J_u\nabla_xu_x)
\nonumber
\\
&=
(P(U)Y_1,dw_u(\nabla_xu_x))dw_u(J_uu_x)
-(P(U)Y_1,dw_u(u_x))dw_u(J_u\nabla_xu_x)
\nonumber
\\
&=
(dw_u(\Xi_1),dw_u(\nabla_xu_x))dw_u(J_uu_x)
-(dw_u(\Xi_1),dw_u(u_x))dw_u(J_u\nabla_xu_x).
\nonumber
\end{align}
Since $w$ is an isometric, this shows 
$$
T_4(U)Y_1
=dw_u\left\{
g(\Xi_1,\nabla_xu_x)J_uu_x
-g(\Xi_1,u_x)J_u\nabla_xu_x
\right\}, 
$$
and thus we obtain 
\begin{align}
(T_4(U)Y_1,Y_2)
&=
(dw_u\left\{
g(\Xi_1,\nabla_xu_x)J_uu_x
-g(\Xi_1,u_x)J_u\nabla_xu_x
\right\},P(U)Y_2)
\nonumber
\\
&=
(dw_u\left\{
g(\Xi_1,\nabla_xu_x)J_uu_x
-g(\Xi_1,u_x)J_u\nabla_xu_x
\right\},dw_u(\Xi_2))
\nonumber
\\
&=
g(g(\Xi_1,\nabla_xu_x)J_uu_x
-g(\Xi_1,u_x)J_u\nabla_xu_x, \Xi_2)
\nonumber
\\
&=
g(A_2\Xi_1,\Xi_2).
\nonumber
\end{align}
Since $g(A_2\Xi_1,\Xi_2)=g(\Xi_1,A_2\Xi_2)$ 
follows from \eqref{eq:gsym}, 
we conclude that \eqref{eq:tae5} holds. 
\end{proof}
In what follows, for simplicity, 
we sometimes write $dw$ instead of $dw_u$ or $dw_v$ 
and use the expression $\displaystyle\sum_{k}$ 
and  $\displaystyle\sum_{k,\ell}$
instead of 
$\displaystyle\sum_{k=3}^{d}$
and 
$\displaystyle\sum_{k,\ell=3}^{d}$
respectively. 
Any confusion will not occur.
%
%
%%%%
%%%%%%
%%%%%%
%%%%%%
\vspace{0.5em}
\par
{\bf 2. Analysis of the partial differential equation 
satisfied by $\UU$. } 
\\
We compute the PDE satisfied by $\UU$.
\par 
First, we start by the computation of the PDE satisfied by $U$. 
Since $u$ satisfies \eqref{eq:pde}, 
\begin{align}
U_t
&=
dw(u_t)
\nonumber
\\
&=
a\,dw(\nabla_xJ_u\nabla_x^2u_x)
+
\lambda\,dw(J_u\nabla_xu_x)
\nonumber
\\
&\quad
+
b\,dw(g(u_x,u_x)J_u\nabla_xu_x)
+
c\,dw(g(\nabla_xu_x,u_x)J_uu_x)
\nonumber
\\
&=
a\,dw(\nabla_xJ_u\nabla_x^2u_x)
+
\lambda\,J(U)dw(\nabla_xu_x)
\nonumber
\\
&\quad 
+
b\, (dw(u_x),dw(u_x))J(U)dw(\nabla_xu_x)
+
c\, (dw(\nabla_xu_x),dw(u_x))J(U)dw(u_x)
\nonumber
\\
&=
a\,dw(\nabla_xJ_u\nabla_x^2u_x)
+
\left\{\lambda+b\,|U_x|^2\right\}
J(U)\UU 
+
c\, (\UU,U_x)J(U)U_x. 
\label{eq:u1}
\end{align}
Using \eqref{eq:cox2} and \eqref{eq:J1} repeatedly, 
we have  
\begin{align}
dw(\nabla_x^2u_x)
&=
\p_x\left(
dw(\nabla_xu_x)
\right)
+
\sum_{\ell}
(dw(\nabla_xu_x), D_{\ell}(U)U_x)\nu_{\ell}(U)
\nonumber
\\
&=
\p_x\UU
+
\sum_{\ell}
(\UU, D_{\ell}(U)U_x)\nu_{\ell}(U), 
\label{eq:u2}
\\
J(U)dw(\nabla_x^2u_x)
&=
J(U)\p_x\UU
+
\sum_{\ell}
(\UU, D_{\ell}(U)U_x)J(U)\nu_{\ell}(U)
=
J(U)\p_x\UU, 
\label{eq:u3}
\\
dw(\nabla_xJ_u\nabla_x^2u_x)
&=
\p_x
\left(
dw(J_u\nabla_x^2u_x)
\right)
+
\sum_k
(dw(J_u\nabla_x^2u_x), D_k(U)U_x)\nu_k(U)
\nonumber
\\
&=
\p_x\left(
J(U)dw(\nabla_x^2u_x)
\right)
+
\sum_k
(J(U)dw(\nabla_x^2u_x), D_k(U)U_x)\nu_k(U)
\nonumber
\\
&=
\p_x\left(
J(U)\p_x\UU
\right)
+
\sum_k
(J(U)\p_x\UU, D_k(U)U_x)\nu_k(U).
\label{eq:u4}
\end{align}
From \eqref{eq:u1} and \eqref{eq:u4}, 
we have 
\begin{align}
U_t
&=
a\,\p_x\left(
J(U)\p_x\UU
\right)
+
a\,
\sum_k
(J(U)\p_x\UU, D_k(U)U_x)\nu_k(U)
+
\mathcal{O}(|U|+|U_x|+|\UU|).
\label{eq:U_t}
\end{align}
\par 
Next, we compute the PDE 
satisfied by $\UU$. 
From 
\eqref{eq:pde}, 
\eqref{eq:curvature}, 
and
\eqref{eq:cot2}, 
it follows that 
\begin{align}
\p_t\UU
&=
\p_t(dw(\nabla_xu_x))
\nonumber
\\
&=
dw(\nabla_t\nabla_xu_x)
-
\sum_{k}
(dw(\nabla_xu_x), D_k(U)U_t)\nu_k(U)
\nonumber
\\
&=
dw(\nabla_x^2u_t+R(u_t,u_x)u_x)
-
\sum_{k}
(\UU, D_k(U)U_t)\nu_k(U)
\nonumber
\\
&=
dw(\nabla_x^2u_t+S\left\{g(u_x,u_x)u_t-g(u_t,u_x)u_x\right\})
-
\sum_{k}
(\UU, D_k(U)U_t)\nu_k(U)
\nonumber
\\
&=
dw(\nabla_x^2u_t)
+S\left\{
(dw(u_x),dw(u_x))dw(u_t)
-(dw(u_t),dw(u_x))dw(u_x)
\right\}
\nonumber
\\
&\quad 
-
\sum_{k}
(\UU, D_k(U)U_t)\nu_k(U)
\nonumber
\\
&=
dw(\nabla_x^2u_t)
+
S|U_x|^2U_t
-S(U_x,U_t)U_x
-
\sum_{k}
(\UU, D_k(U)U_t)\nu_k(U)
\nonumber
\\
&=:I+II+III+IV.
\label{eq:I-IV} 
\end{align}
We compute $II$, $III$, $IV$, and $I$ in order. 
\par
Applying \eqref{eq:U_t}, 
we have 
\begin{align}
II
&=
aS\,\p_x\left\{
|U_x|^2J(U)\p_x\UU
\right\}
-2aS\,(\p_xU_x,U_x)J(U)\p_x\UU
\nonumber
\\
&\quad 
+aS\,|U_x|^2\sum_k
(J(U)\p_x\UU, D_k(U)U_x)\nu_k(U)
+
\mathcal{O}(|U|+|U_x|+|\UU|).
\label{eq:II}
\intertext{In the same way, by noting 
$(U_x,\nu_k(U))=0$ 
and 
\eqref{eq:J0}, 
we obtain 
}
III&=
-aS\, 
\left(
U_x, 
\p_x\left(J(U)\p_x\UU\right)
\right)
U_x
+
\mathcal{O}(|U|+|U_x|+|\UU|)
\nonumber
\\
&=
-aS\, (J(U)\p_x^2\UU,U_x)U_x
-aS\, (\p_x(J(U))\p_x\UU,U_x)U_x
+\mathcal{O}(|U|+|U_x|+|\UU|)
\nonumber
\\
&=
aS\, (\p_x^2\UU,J(U)U_x)U_x
-aS\, (\p_x(J(U))\p_x\UU,U_x)U_x
+\mathcal{O}(|U|+|U_x|+|\UU|).
\label{eq:III1}
\end{align}
Furthermore, by substituting \eqref{eq:k1} with $Y=\p_x^2\UU$ 
into the first term of the RHS of \eqref{eq:III1}, 
\begin{align}
III
&=
aS\,
(\p_x^2\UU,U_x)J(U)U_x
-aS\,
|U_x|^2J(U)\p_x^2\UU
-aS\, (\p_x(J(U))\p_x\UU,U_x)U_x
\nonumber
\\
&\quad
+\mathcal{O}(|U|+|U_x|+|\UU|)
\nonumber
\\
&=
aS\,
(\p_x^2\UU,U_x)J(U)U_x
-aS\,
\p_x
\left\{
|U_x|^2J(U)\p_x\UU
\right\}
\nonumber
\\
&\quad
+2aS\,
(\p_xU_x,U_x)J(U)\p_x\UU
+aS\,
|U_x|^2
\p_x(J(U))\p_x\UU
\nonumber
\\
&\quad
-aS\, (\p_x(J(U))\p_x\UU,U_x)U_x
+\mathcal{O}(|U|+|U_x|+|\UU|).
\label{eq:III}
\end{align}
In the same way, by applying \eqref{eq:U_t},  
\begin{align}
IV&=
-a\,
\sum_k
\left(
\UU, D_k(U)\p_x\left(
J(U)\p_x\UU
\right)
\right)\nu_k(U)
\nonumber
\\
&\quad
-a\, 
\sum_k
\left(
\UU, 
D_k(U)\sum_{\ell}
(J(U)\p_x\UU, D_{\ell}(U)U_x)\nu_{\ell}(U)
\right)
\nu_k(U)
\nonumber
\\
&\quad
+
\mathcal{O}(|U|+|U_x|+|\UU|)
\nonumber 
\\
&=
-a\,
\sum_k
\left(
\UU, D_k(U)\p_x\left(
J(U)\p_x\UU
\right)
\right)\nu_k(U)
\nonumber
\\
&\quad
-a\, 
\sum_{k,\ell}
(\UU, D_k(U)\nu_{\ell}(U))
(J(U)\p_x\UU, D_{\ell}(U)U_x)
\nu_k(U)
\nonumber
\\
&\quad
+
\mathcal{O}(|U|+|U_x|+|\UU|).
\label{eq:IV}
\end{align}
Let us now move on to the computation of $I$. 
We start with 
\begin{align}
I&=dw(\nabla_x^2u_t)
\nonumber
\\
&=
a\, dw(\nabla_x^4J_u\nabla_xu_x)
+
\lambda\,
dw(\nabla_x^2J_u\nabla_xu_x)
\nonumber
\\
&\quad 
+
b\,dw
\left(
\nabla_x^2\left\{
g(u_x,u_x)J_u\nabla_xu_x
\right\}
\right)
+
c\,dw
\left(
\nabla_x^2\left\{
g(\nabla_xu_x,u_x)J_uu_x
\right\}
\right)
\nonumber
\\
&=:
I_1+I_2+I_3+I_4.
\label{eq:I_1-I_4}
\end{align}
We compute $I_2$, $I_3$, $I_4$, and $I_1$ in order. 
\par
For $I_2$, we have 
\begin{align}
I_2&=
\lambda\,
dw\left(\nabla_x(\nabla_xJ_u\nabla_xu_x)\right)
\nonumber
\\
&=
\lambda\,
\p_x\left(
dw(\nabla_xJ_u\nabla_xu_x)
\right)
+
\lambda\, 
\sum_{\ell}
\left(
dw(\nabla_xJ_u\nabla_xu_x),
D_{\ell}(U)U_x
\right)
\nu_{\ell}(U). 
\nonumber
\end{align}
Since 
\begin{align}
dw(\nabla_xJ_u\nabla_xu_x)
&=
\p_x\left(
dw(J_u\nabla_xu_x)
\right)
+
\sum_k
\left(
dw(J_u\nabla_xu_x), D_k(U)U_x
\right)
\nu_k(U)
\nonumber
\\
&=
\p_x\left(
J(U)\UU
\right)
+
\sum_k
\left(
J(U)\UU, D_k(U)U_x
\right)
\nu_k(U)
\nonumber
\\
&=
J(U)\p_x\UU
+
\p_x(J(U))\UU
+
\sum_k
\left(
J(U)\UU, D_k(U)U_x
\right)
\nu_k(U), 
\label{eq:eri1}
\end{align}
we obtain 
\begin{align}
I_2&=
\lambda\,
\p_x\left\{
J(U)\p_x\UU
\right\}
+
\lambda\,
\p_x(J(U))\p_x\UU
+
2\lambda\,
\sum_k
\left(
J(U)\p_x\UU, D_k(U)U_x
\right)
\nu_k(U)
\nonumber
\\
&\quad
+\mathcal{O}(|U|+|U_x|+|\UU|).
\label{eq:I_2} 
\end{align}
\par 
For $I_3$, we have 
\begin{align}
I_3
&=
b\,dw\left\{
g(u_x,u_x)\nabla_x^2J_u\nabla_xu_x
\right\}
+
b\,dw\left\{
2\nabla_x(g(u_x,u_x))\nabla_xJ_u\nabla_xu_x
\right\}
\nonumber
\\
&\quad
+
b\,dw\left\{
\nabla_x^2(g(u_x,u_x))J_u\nabla_xu_x
\right\}
\nonumber
\\
&=
b\,g(u_x,u_x)
dw(\nabla_x^2J_u\nabla_xu_x)
+
4b\, g(\nabla_xu_x,u_x)dw(\nabla_xJ_u\nabla_xu_x)
\nonumber
\\
&\quad 
+
2b\, g(\nabla_x^2u_x,u_x)dw(J_u\nabla_xu_x)
+
2b\, g(\nabla_xu_x,\nabla_xu_x)dw(J_u\nabla_xu_x)
\nonumber
\\
&=
b\, |U_x|^2dw(\nabla_x^2J_u\nabla_xu_x)
+
4b\, (dw(\nabla_xu_x),U_x)dw(\nabla_xJ_u\nabla_xu_x)
\nonumber
\\
&\quad
+
2b\, (dw(\nabla_x^2u_x),U_x)dw(J_u\nabla_xu_x)
+
2b\,  |dw(\nabla_xu_x)|^2dw(J_u\nabla_xu_x)
\nonumber
\\
&=
b\, |U_x|^2dw(\nabla_x^2J_u\nabla_xu_x)
+
4b\, (\UU,U_x)dw(\nabla_xJ_u\nabla_xu_x)
\nonumber
\\
&\quad
+
2b\, (dw(\nabla_x^2u_x),U_x)J(U)\UU
+
2b\,  |\UU|^2J(U)\UU. 
\nonumber
\end{align}
Here, we recall the K\"ahler condition on $(N,J,g)$ 
to see 
$dw(\nabla_x^2J_u\nabla_xu_x)=dw(\nabla_xJ_u\nabla_x^2u_x)$. 
Hence, substituting \eqref{eq:u4}, \eqref{eq:eri1},  and \eqref{eq:u2}, 
we deduce 
\begin{align}
I_3&=
b\,|U_x|^2
\p_x\left\{
J(U)\p_x\UU
\right\}
+b\,|U_x|^2
\sum_k
(J(U)\p_x\UU, D_k(U)U_x)\nu_k(U) 
\nonumber
\\
&\quad +
4b\, (\UU,U_x)J(U)\p_x\UU
+
2b\, (\p_x\UU,U_x)J(U)\UU
+\mathcal{O}(|U|+|U_x|+|\UU|)
\nonumber
\\
&=
b\,
\p_x
\left\{
|U_x|^2
J(U)\p_x\UU
\right\}
-2b\, 
(\p_xU_x,U_x)
J(U)\p_x\UU
\nonumber
\\
&\quad 
+b\,|U_x|^2 
\sum_k
(J(U)\p_x\UU, D_k(U)U_x)\nu_k(U) 
\nonumber
\\
&\quad +
4b\, (\UU,U_x)J(U)\p_x\UU
+
2b\, (\p_x\UU,U_x)J(U)\UU
+\mathcal{O}(|U|+|U_x|+|\UU|).
\label{eq:erieri}
\end{align}
Furthermore, 
by noting 
$\UU=
dw(\nabla_xu_x)
=
\p_xU_x
+
\displaystyle\sum_k
(U_x,D_k(U)U_x)\nu_k(U)$, 
we see
\begin{align}
(\UU,U_x)
&=
(\p_xU_x,U_x)
+
\sum_k(U_x,D_k(U)U_x)(\nu_k(U),U_x)
=
(\p_xU_x,U_x), 
\label{eq:eri6}
\\
J(U)\UU
&=
J(U)\left(
\p_xU_x
+
\sum_k
(U_x,D_k(U)U_x)\nu_k(U)
\right)
=
J(U)\p_xU_x.
\label{eq:eri7}
\end{align}
Collecting the information 
\eqref{eq:erieri},
\eqref{eq:eri6}, 
and \eqref{eq:eri7}, 
we obtain 
\begin{align}
I_3&=
b\,
\p_x
\left\{
|U_x|^2
J(U)\p_x\UU
\right\}
+2b\, 
(\p_xU_x,U_x)
J(U)\p_x\UU
+
2b\, (\p_x\UU,U_x)J(U)\p_xU_x
\nonumber
\\
&\quad
+b\,|U_x|^2
\sum_k
(J(U)\p_x\UU, D_k(U)U_x)\nu_k(U) 
+\mathcal{O}(|U|+|U_x|+|\UU|).
\label{eq:I_3}
\end{align}
\par
For $I_4$, we have 
\begin{align}
I_4
&=
c\,dw
\left(
g(\nabla_x^3u_x,u_x)J_uu_x
\right)
+
3c\,dw
\left(
g(\nabla_x^2u_x,\nabla_xu_x)J_uu_x
\right)
\nonumber
\\
&\quad
+
2c\,dw
\left(
g(\nabla_x^2u_x,u_x)J_u\nabla_xu_x
\right)
+
2c\,dw
\left(
g(\nabla_xu_x,\nabla_xu_x)J_u\nabla_xu_x
\right)
\nonumber
\\
&\quad
+
c\,dw
\left(
g(\nabla_xu_x,u_x)J_u\nabla_x^2u_x.
\right)
\nonumber
\\
&=
c\,
(dw(\nabla_x^3u_x),U_x)J(U)U_x
+
3c\,
(dw(\nabla_x^2u_x),\UU)J(U)U_x
\nonumber
\\
&\quad
+
2c\,
(dw(\nabla_x^2u_x),U_x)J(U)\UU
+
2c\,
|\UU|^2J(U)\UU
+
c\,
(\UU,U_x)J(U)dw(\nabla_x^2u_x).
\nonumber
\end{align}
From \eqref{eq:u2}, it follows that 
\begin{align}
dw(\nabla_x^3u_x)
&=
\p_x\left\{
dw(\nabla_x^2u_x)
\right\}
+
\sum_{\ell}
\left(
dw(\nabla_x^2u_x),D_{\ell}(U)U_x
\right)
\nu_{\ell}(U)
\nonumber
\\
&=
\p_x^2\UU
+
2\sum_{k}
\left(
\p_x\UU, D_k(U)U_x
\right)
\nu_k(U)
+
\mathcal{O}(|U|+|U_x|+|\UU|). 
\nonumber
\end{align}
This together with $(\nu_k(U),U_x)=0$ yields 
\begin{align}
(dw(\nabla_x^3u_x),U_x)
&=
(\p_x^2\UU,U_x)
+
\mathcal{O}(|U|+|U_x|+|\UU|). 
\label{eq:eri9}
\end{align}
By using \eqref{eq:u2}, \eqref{eq:eri6}, 
\eqref{eq:eri7}, and \eqref{eq:eri9}, 
we obtain 
\begin{align}
I_4
&=
c\,
(\p_x^2\UU,U_x)J(U)U_x
+
3c\,
(\p_x\UU,\UU)J(U)U_x
\nonumber
\\
&\quad
+
2c\,
(\p_x\UU,U_x)J(U)\UU
+
c\,
(\UU,U_x)J(U)\p_x\UU
+
\mathcal{O}(|U|+|U_x|+|\UU|) 
\nonumber
\\
&=
c\,
(\p_x^2\UU,U_x)J(U)U_x
+
3c\,
(\p_x\UU,\p_xU_x)J(U)U_x
\nonumber
\\
&\quad
+
2c\,
(\p_x\UU,U_x)J(U)\p_xU_x
+
c\,
(\p_xU_x,U_x)J(U)\p_x\UU
+
\mathcal{O}(|U|+|U_x|+|\UU|).  
\label{eq:I_4}
\end{align}
\par 
For $I_1=a\,dw(\nabla_x\nabla_xJ_u\nabla_x^2\nabla_xu_x)$, 
we start with 
\begin{align}
&dw(\nabla_x\nabla_xJ_u\nabla_x^2\nabla_xu_x)
\nonumber
\\
&=
\p_x\left\{
dw(\nabla_xJ_u\nabla_x^2\nabla_xu_x)
\right\}
+
\sum_{k}
\left(
dw(\nabla_xJ_u\nabla_x^2\nabla_xu_x), 
D_k(U)U_x
\right)\nu_k(U), 
\label{eq:I11}
\end{align}
and 
\begin{align}
&dw(\nabla_xJ_u\nabla_x^2\nabla_xu_x)
\nonumber
\\
&=
\p_x\left\{
dw(J_u\nabla_x^2\nabla_xu_x)
\right\}
+
\sum_{\ell}
\left(
dw(J_u\nabla_x^2\nabla_xu_x), 
D_{\ell}(U)U_x
\right)
\nu_{\ell}(U).
\label{eq:I12}
\end{align}
From \eqref{eq:J(wp)1}, \eqref{eq:kaehler2} with $Y=\p_x\UU$, 
the K\"ahler condition on $(N,J,g)$,  
and \eqref{eq:u4}, 
it follows that 
\begin{align}
dw(J_u\nabla_x^2\nabla_xu_x)
&=
J(U)\p_x^2\UU
+
\p_x(J(U))\p_x\UU
+
\sum_k
(J(U)\p_x\UU, D_k(U)U_x)\nu_k(U)
\nonumber
\\
&=
J(U)\p_x^2\UU
+
\sum_k
\left(\p_x\UU,J(U)D_k(U)U_x\right)\nu_k(U)
\nonumber
\\
&\quad
-\sum_k
(\p_x\UU,\nu_k(U))J(U)D_k(U)U_x
-
\sum_k
(\p_x\UU, J(U)D_k(U)U_x)\nu_k(U)
\nonumber
\\
&=
J(U)\p_x^2\UU
-\sum_k
(\p_x\UU,\nu_k(U))J(U)D_k(U)U_x.
\nonumber
\end{align}
Here note that 
$(\UU, \nu_k(U))=0$ holds. 
By taking the derivative of both sides in $x$, 
we see 
\begin{equation}
(\p_x\UU,\nu_k(U))
=
-(\UU,\p_x(\nu_k(U)))
=
-(\UU, D_k(U)U_x).
\nonumber
\end{equation}
Using this, 
we obtain 
\begin{align}
dw(J_u\nabla_x^2\nabla_xu_x)
&=
J(U)\p_x^2\UU
+\sum_k
(\UU,D_k(U)U_x)J(U)D_k(U)U_x.
\label{eq:I15}
\end{align}
Furthermore, by substituting \eqref{eq:I15} into 
\eqref{eq:I12}, we have  
\begin{align}
&dw(\nabla_xJ_u\nabla_x^2\nabla_xu_x) 
\nonumber
\\
&=
\p_x\left\{
J(U)\p_x^2\UU
+\sum_n
(\UU,D_n(U)U_x)J(U)D_n(U)U_x
\right\}
\nonumber
\\
&\quad
+
\sum_{\ell}
\left(
J(U)\p_x^2\UU
+\sum_n
(\UU,D_n(U)U_x)J(U)D_n(U)U_x, 
D_{\ell}(U)U_x
\right)
\nu_{\ell}(U)
\nonumber
\\
&=
\p_x\left\{
J(U)\p_x^2\UU
\right\}
-
\sum_{\ell}
\left(\p_x^2\UU,
J(U)D_{\ell}(U)U_x
\right)\nu_{\ell}(U)
\nonumber
\\
&\quad
+
\sum_{n}
(\p_x\UU, D_n(U)U_x)J(U)D_n(U)U_x
\nonumber
\\
&\quad
+\sum_{n}
\left(
\UU, 
\p_x
\left\{
D_n(U)U_x
\right\}
\right)
J(U)D_n(U)U_x
\nonumber
\\
&\quad 
+
\sum_{n}
(\UU, D_n(U)U_x)
\p_x\left\{
J(U)D_n(U)U_x
\right\}
\nonumber
\\
&\quad
+
\sum_{\ell,n}
(\UU,D_n(U)U_x)
\left(
J(U)D_n(U)U_x,D_{\ell}(U)U_x
\right)
\nu_{\ell}(U).
\label{eq:I16}
\end{align}
Therefore, by substituting \eqref{eq:I16} into \eqref{eq:I11}, 
and by using 
$\p_x^2U_x=\p_x\UU+\mathcal{O}(|U|+|U_x|+|\UU|)$, 
we deduce 
\begin{align}
I_1
&=
a\,\p_x^2\left\{
J(U)\p_x^2\UU
\right\}
-
a\,\sum_{\ell}
\left(\p_x^3\UU,
J(U)D_{\ell}(U)U_x
\right)\nu_{\ell}(U)
\nonumber
\\
&\quad
-
a\,\sum_{\ell}
\left(\p_x^2\UU,
\p_x\left\{
J(U)D_{\ell}(U)U_x
\right\}
\right)\nu_{\ell}(U)
\nonumber
\\
&\quad
-
a\,\sum_{\ell}
\left(\p_x^2\UU,
J(U)D_{\ell}(U)U_x
\right)
D_{\ell}(U)U_x
\nonumber
\\
&\quad
+
a\,\sum_{n}
(\p_x^2\UU, D_n(U)U_x)J(U)D_n(U)U_x
\nonumber
\\
&\quad
+
a\,\sum_{n}
(\p_x\UU, \p_x\left\{D_n(U)U_x\right\})J(U)D_n(U)U_x
\nonumber
\\
&\quad
+
a\,\sum_{n}
(\p_x\UU, D_n(U)U_x)\p_x\left\{J(U)D_n(U)U_x\right\}
\nonumber
\\
&\quad
+a\,\sum_{n}
\left(
\p_x\UU, 
\p_x
\left\{
D_n(U)U_x
\right\}
\right)
J(U)D_n(U)U_x
\nonumber
\\
&\quad
+
a\,\sum_{n}
\left(
\UU, 
D_n(U)\p_x^2U_x
\right)
J(U)D_n(U)U_x
\nonumber
\\
&\quad 
+
a\,\sum_{n}
(\p_x\UU, D_n(U)U_x)
\p_x\left\{
J(U)D_n(U)U_x
\right\}
\nonumber
\\
&\quad 
+
a\,\sum_{n}
(\UU, D_n(U)U_x)
J(U)
D_n(U)\p_x^2U_x
\nonumber
\\
&\quad
+
a\,\sum_{\ell,n}
(\p_x\UU,D_n(U)U_x)
\left(
J(U)D_n(U)U_x,D_{\ell}(U)U_x
\right)
\nu_{\ell}(U)
\nonumber
\\
&\quad
+
a\,\sum_{k}
\left(
\p_x\left\{
J(U)\p_x^2\UU
\right\}, 
D_k(U)U_x
\right)\nu_k(U)
\nonumber
\\
&\quad
-
a\,\sum_{k,\ell}
\left(\p_x^2\UU,
J(U)D_{\ell}(U)U_x
\right)
\left(
\nu_{\ell}(U), 
D_k(U)U_x
\right)\nu_k(U)
\nonumber
\\
&\quad
+
a\,\sum_{k,n}
(\p_x\UU, D_n(U)U_x)
\left(
J(U)D_n(U)U_x, 
D_k(U)U_x
\right)\nu_k(U)
\nonumber
\\
&\quad 
+
\mathcal{O}
(|U|,|U_x|,|\UU|)
\nonumber
\\
&=
a\,\p_x^2\left\{
J(U)\p_x^2\UU
\right\}
-
2a\,
F_1(\p_x^3\UU)
-
a\,F_2(\p_x^2\UU)
+
a\,F_3(\p_x^2\UU)
\nonumber
\\
&\quad
+
2a\,F_4(\p_x\UU)
+
2a\,F_5(\p_x\UU)
+a\,F_6(\p_x\UU)
+
a\,F_7(\p_x\UU)
\nonumber
\\
&\quad
+
\sum_k
\mathcal{O}
\left(
|U|+|U_x|+|\UU|+|\p_x\UU|+|\p_x^2\UU|
\right)\nu_k(U)
\nonumber
\\
&\quad 
+
\mathcal{O}
(|U|+|U_x|+|\UU|), 
\label{eq:I20}
\end{align}
where for any $Y:[0,T]\times \TT\to \RR^d$, 
\begin{align}
F_1(Y)&=\sum_{k}
\left(Y,
J(U)D_{k}(U)U_x
\right)\nu_{k}(U),
\nonumber
\\
F_2(Y)&=\sum_{k}
\left(Y,
J(U)D_{k}(U)U_x
\right)
D_{k}(U)U_x,
\nonumber
\\
F_3(Y)&=\sum_{k}
(Y, D_k(U)U_x)J(U)D_k(U)U_x,
\nonumber
\\
F_4(Y)&=\sum_{k}
(Y, \p_x\left\{D_k(U)U_x\right\})J(U)D_k(U)U_x, 
\nonumber
\\
F_5(Y)&=\sum_{k}
(Y, D_k(U)U_x)\p_x\left\{J(U)D_k(U)U_x\right\},
\nonumber
\\
F_6(Y)&=\sum_{k}
\left(
\UU, 
D_k(U)Y
\right)
J(U)D_k(U)U_x,
\nonumber
\\
F_7(Y)&=\sum_{k}
(\UU, D_k(U)U_x)
J(U)
D_k(U)Y.
\nonumber
\end{align}
Combining 
\eqref{eq:II},
\eqref{eq:III}, 
\eqref{eq:IV}, 
\eqref{eq:I_2}, 
\eqref{eq:I_3},
\eqref{eq:I_4}, 
and 
\eqref{eq:I20}, 
we derive 
\begin{align}
\p_x\UU
&=
I_1+I_2+I_3+I_4+II+III+IV
\nonumber
\\
&=
a\,\p_x^2\left\{
J(U)\p_x^2\UU
\right\}
-
2a\,F_1(\p_x^3\UU)
+
\lambda\,
\p_x\left\{
J(U)\p_x\UU
\right\}
\nonumber
\\
&\quad
+
(b+aS-aS)\,
\p_x
\left\{
|U_x|^2
J(U)\p_x\UU
\right\}
+
(c+aS)\,
(\p_x^2\UU,U_x)J(U)U_x
\nonumber
\\
&\quad 
-
a\,F_2(\p_x^2\UU)
+
a\,F_3(\p_x^2\UU)
+
2a\,F_4(\p_x\UU)
+
2a\,F_5(\p_x\UU)
\nonumber
\\
&\quad
+a\,F_6(\p_x\UU)
+
a\,F_7(\p_x\UU)
+(2b+c-2aS+2aS)\, 
(\p_xU_x,U_x)
J(U)\p_x\UU
\nonumber
\\
&\quad
+
(2b+2c)\, (\p_x\UU,U_x)J(U)\p_xU_x 
+
3c\,
(\p_x\UU,\p_xU_x)J(U)U_x
\nonumber
\\
&\quad
-aS\, (\p_x(J(U))\p_x\UU,U_x)U_x
+aS\,
|U_x|^2
\p_x(J(U))\p_x\UU
\nonumber
\\
&\quad
+\lambda\, 
\p_x(J(U))\p_x\UU
+
r(U,U_x,\UU,\p_x\UU,\p_x^2\UU) 
+
\mathcal{O}
(|U|+|U_x|+|\UU|), 
\label{eq:eqUU}
\end{align}
where 
\begin{equation}
r(U,U_x,\UU,\p_x\UU,\p_x^2\UU)
=
\sum_k
\mathcal{O}
\left(
|U|+|U_x|+|\UU|+|\p_x\UU|+|\p_x^2\UU|
\right)\nu_k(U).
\nonumber
\end{equation}
\vspace{0.5em}
\\
{\bf 3. Classical energy estimates for 
$\|\WW\|_{L^2(\TT;\RR^d)}$ with the loss of derivatives}
\\ 
We compute $\p_t\WW=\p_t\UU-\p_t\VV$
and next evaluate the classical energy estimate for $\WW$ in $L^2$. 
Obviously, $\VV$ also satisfies \eqref{eq:eqUU} 
replacing $\UU$ with $\VV$. 
Hence, by using the mean value formula, 
we obtain 
\begin{align}
\p_t\WW
&=
a\,\p_x^2\left\{
J(U)\p_x^2\WW
\right\} 
-
2a\,F_1(\p_x^3\WW)
+
\lambda\, 
\p_x\left\{
J(U)\p_x\WW
\right\}
\nonumber
\\
&\quad
+
b\,
\p_x
\left\{
|U_x|^2
J(U)\p_x\WW
\right\}
+
(c+aS)\,
(\p_x^2\WW,U_x)J(U)U_x
\nonumber
\\
&\quad 
-
a\,F_2(\p_x^2\WW)
+
a\,F_3(\p_x^2\WW)
+
2a\,F_4(\p_x\WW)
+
2a\,F_5(\p_x\WW)
\nonumber
\\
&\quad
+a\,F_6(\p_x\WW)
+
a\,F_7(\p_x\WW)
+(2b+c)\, 
(\p_xU_x,U_x)
J(U)\p_x\WW
\nonumber
\\
&\quad
+
(2b+2c)\, (\p_x\WW,U_x)J(U)\p_xU_x 
+
3c\,
(\p_x\WW,\p_xU_x)J(U)U_x
\nonumber
\\
&\quad
-aS\, (\p_x(J(U))\p_x\WW,U_x)U_x
+aS\,
|U_x|^2
\p_x(J(U))\p_x\WW
+\lambda\, 
\p_x(J(U))\p_x\WW
\nonumber
\\
&\quad
+
r(U,U_x,\UU,\p_x\UU,\p_x^2\UU)
-
r(V,V_x,\VV,\p_x\VV,\p_x^2\VV)
\nonumber
\\
&\quad 
+
\mathcal{O}
(|Z|+|Z_x|+|\WW|).
\label{eq:eqWW}
\end{align} 
Note that 
$F_1(\cdot),\ldots,F_7(\cdot)$ 
should be expressed globally not by using 
local orthonormal frame.  
It is possible   
by using the second fundamental form on $w(N)$ 
and the derivatives, 
or by following the argument in \cite{onodera2} to use 
the partition of unity on $w(N)$. 
However, for simplicity and for better understandings, 
we will continue to use the local expression 
without loss of generality. 
\vspace{0.3em}  
\par
We  move on to the classical energy estimate for 
$\|\WW\|_{L^2}^2$. 
Since $k\geqslant 6$, 
$\WW\in L^{\infty}(0,T;H^5)\cap C([0,T];H^4)\cap C^1([0,T];L^2)$.  
This together with \eqref{eq:eqWW} implies 
\begin{align}
&\frac{1}{2}
\frac{d}{dt} 
\|\WW\|_{L^2}^2
\nonumber
\\
&=
\lr{\p_t\WW,\WW}
\nonumber
\\
&=
a\,
\lr{
\p_x^2\left\{
J(U)\p_x^2\WW
\right\}, 
\WW} 
-
2a\,
\lr{
F_1(\p_x^3\WW)
, \WW
}
+
\lambda\,
\lr{
\p_x\left\{
J(U)\p_x\WW
\right\}
,\WW
}
\nonumber
\\
&\quad
+
b\,
\lr{
\p_x
\left\{
|U_x|^2
J(U)\p_x\WW
\right\}
,\WW
}
+
(c+aS)\,
\lr{
(\p_x^2\WW,U_x)J(U)U_x, 
\WW
}
\nonumber
\\
&\quad 
-
a\,
\lr{
F_2(\p_x^2\WW), 
\WW
}
+
a\,
\lr{
F_3(\p_x^2\WW), 
\WW
}
+
2a\,
\lr{
F_4(\p_x\WW), 
\WW
}
\nonumber
\\
&\quad
+
2a\,
\lr{
F_5(\p_x\WW), 
\WW
}
+a\,
\lr{
F_6(\p_x\WW), 
\WW
}
+
a\,
\lr{
F_7(\p_x\WW), 
\WW
}
\nonumber
\\
&\quad
+(2b+c)\, 
\lr{
(\p_xU_x,U_x)
J(U)\p_x\WW, 
\WW
}
+
(2b+2c)\, 
\lr{
(\p_x\WW,U_x)J(U)\p_xU_x, 
\WW
}
\nonumber
\\
&\quad 
+
3c\,
\lr{
(\p_x\WW,\p_xU_x)J(U)U_x, 
\WW
}
-aS\, 
\lr{
(\p_x(J(U))\p_x\WW,U_x)U_x, 
\WW
}
\nonumber
\\
&\quad
+aS\,
\lr{
|U_x|^2
\p_x(J(U))\p_x\WW, 
\WW
}
+\lambda\, 
\lr{
\p_x(J(U))\p_x\WW, 
\WW
}
\nonumber
\\
&\quad
+
\lr{
r(U,U_x,\UU,\p_x\UU,\p_x^2\UU)
-
r(V,V_x,\VV,\p_x\VV,\p_x^2\VV), 
\WW
}
\nonumber
\\
&\quad 
+
\lr{
\mathcal{O}
(|Z|+|Z_x|+|\WW|), 
\WW
}.
\label{eq:EEn}
\end{align}
\par
Let us compute the RHS of the above 
term by term. 
First, by integrating by parts, it is immediate to see
\begin{align}
a\,
\lr{
\p_x^2\left\{
J(U)\p_x^2\WW
\right\}, 
\WW}
&=
a\,
\lr{
J(U)\p_x^2\WW, 
\p_x^2\WW
}
=0,
\nonumber
\\
\lambda\,
\lr{
\p_x\left\{
J(U)\p_x\WW
\right\}
,\WW
}
&=
-\lambda\,
\lr{
J(U)\p_x\WW
,\p_x\WW
}
=0, 
\nonumber
\\
b\,
\lr{
\p_x
\left\{
|U_x|^2
J(U)\p_x\WW
\right\}
,\WW
}
&=
-b\,
\lr{
|U_x|^2
J(U)\p_x\WW
,\p_x\WW
}
=0.
\nonumber
\end{align}
\par 
Next, 
by the Cauchy-Schwartz inequality, 
there holds 
\begin{align}
\lr{
\mathcal{O}
(|Z|+|Z_x|+|\WW|), 
\WW
}
&\leqslant 
\|\mathcal{O}
(|Z|+|Z_x|+|\WW|)\|_{L^2}
\|\WW\|_{L^2} 
\nonumber
\\
&
\leqslant 
C\left\{
\|Z\|_{L^2}^2
+\|Z_x\|_{L^2}^2
+\|\WW\|_{L^2}^2
\right\}
\nonumber
\end{align}
for some $C>0$. 
Here and hereafter, 
various positive constants depending on 
$\|u_x\|_{L^{\infty}(0,T;H^6)}$ and $\|v_x\|_{L^{\infty}(0,T;H^6)}$
will be denoted by the same $C$
without any comments. 
Besides, we use the notation $D(t)$ so that the square is defined by 
$$D(t)^2=\|Z\|_{L^2}^2
+\|Z_x\|_{L^2}^2
+\|\WW\|_{L^2}^2.$$ 
\par
Next, by noting 
\begin{align}
&
r(U,U_x,\UU,\p_x\UU,\p_x^2\UU)
-
r(V,V_x,\VV,\p_x\VV,\p_x^2\VV)
\nonumber
\\
&=
\sum_k
\mathcal{O}
\left(
|Z|+|Z_x|+|\WW|+|\p_x\WW|+|\p_x^2\WW|
\right)\nu_k(U)
\nonumber
\\
&\quad
+\sum_k
\mathcal{O}
\left(
|U|+|U_x|+|\UU|+|\p_x\UU|+|\p_x^2\UU|
\right)(\nu_k(U)-\nu_k(V)), 
\nonumber
\end{align}
we use \eqref{eq:key1} obtained in Lemma~\ref{lemma:nu}, 
$\p_xZ_x=\WW+\mathcal{O}(|Z|+|Z_x|)$,  
and the integration by parts, 
to obtain    
\begin{align}
&\lr{
r(U,U_x,\UU,\p_x\UU,\p_x^2\UU)
-
r(V,V_x,\VV,\p_x\VV,\p_x^2\VV), 
\WW
}
\nonumber
\\
&\leqslant 
\lr{
\sum_k
\mathcal{O}
\left(
|Z|+|Z_x|+|\WW|+|\p_x\WW|+|\p_x^2\WW|
\right)\nu_k(U),
\WW
}
+C\,D(t)^2
\nonumber
\\
&=
\int_{\TT}
\sum_k
\mathcal{O}
\left(
|Z|+|Z_x|+|\WW|+|\p_x\WW|+|\p_x^2\WW|
\right)
\mathcal{O}(|Z|)\,dx
+C\,D(t)^2
\nonumber
\\
&=
\int_{\TT}
\sum_k
\mathcal{O}
\left(
|Z|+|Z_x|+|\WW|
\right)
\mathcal{O}(|Z|+|Z_x|+|\WW|)\,dx
+C\,D(t)^2
\nonumber
\\
&\leqslant 
C\,D(t)^2.
\label{eq:18e}
\end{align}
\par
In the next computation, 
the K\"ahler condition on $(N,J,g)$ 
plays the crucial parts. 
Indeed, we 
apply \eqref{eq:kaehler2} with $Y=\p_x\WW$ and use \eqref{eq:key2}  
to obtain   
\begin{align}
\p_x(J(U))\p_x\WW
&=
\sum_k
\left(\p_x\WW,J(U)D_k(U)U_x\right)\nu_k(U)
-\sum_k
(\p_x\WW,\nu_k(U))J(U)D_k(U)U_x
\nonumber
\\
&=
\sum_k
\left(\p_x\WW,J(U)D_k(U)U_x\right)\nu_k(U)
+\mathcal{O}(|Z|+|Z_x|+|\WW|). 
\label{eq:0yumi}
\end{align}
By using \eqref{eq:0yumi} and \eqref{eq:key1}, we see  
\begin{align}
(\p_x(J(U))\p_x\WW,U_x)
&=
\sum_k
\left(\p_x\WW,J(U)D_k(U)U_x\right)(\nu_k(U), U_x)
+\mathcal{O}(|Z|+|Z_x|+|\WW|)
\nonumber
\\
&=
\mathcal{O}(|Z|+|Z_x|+|\WW|),
\label{eq:1yumi}
\\
(\p_x(J(U))\p_x\WW,\WW)
&=
\sum_k
\left(\p_x\WW,J(U)D_k(U)U_x\right)(\nu_k(U), \WW)
+\mathcal{O}(|Z|+|Z_x|+|\WW|)
\nonumber
\\
&=
\sum_k
\left(\p_x\WW,J(U)D_k(U)U_x\right)\mathcal{O}(|Z|)
+
\mathcal{O}(|Z|+|Z_x|+|\WW|).
\label{eq:2yumi}
\end{align}
Thus, by using \eqref{eq:1yumi} and the Cauchy-Schwartz inequality, 
we have
\begin{align}
-aS\, 
\lr{
(\p_x(J(U))\p_x\WW,U_x)U_x, 
\WW
}
&\leqslant 
C\,D(t)^2. 
\nonumber
\end{align}
In the same manner,
by using \eqref{eq:2yumi}, the Cauchy-Schwartz inequality, 
together with the integration by parts, 
we deduce 
\begin{align}
aS\,
\lr{
|U_x|^2
\p_x(J(U))\p_x\WW, 
\WW}
+\lambda\, 
\lr{
\p_x(J(U))\p_x\WW, 
\WW
} 
&\leqslant 
C\,D(t)^2.  
\nonumber
\end{align}
Collecting them, we obtain 
\begin{align}
&\frac{1}{2}\frac{d}{dt}
\|\WW\|_{L^2}^2
\nonumber
\\
&\leqslant
(c+aS)\,
\lr{
(\p_x^2\WW,U_x)J(U)U_x, 
\WW
}
+(2b+c)\, 
\lr{
(\p_xU_x,U_x)
J(U)\p_x\WW, 
\WW
}
\nonumber
\\
&\quad
+
(2b+2c)\, 
\lr{
(\p_x\WW,U_x)J(U)\p_xU_x, 
\WW
}  
+
3c\,
\lr{
(\p_x\WW,\p_xU_x)J(U)U_x, 
\WW
}
\nonumber
\\
&\quad 
+\sum_{i=1}^7R_i
+C\,D(t)^2, 
\label{eq:nn}
\end{align}
where 
\begin{alignat}{2}
R_1&=-
2a\,
\lr{
F_1(\p_x^3\WW)
, \WW
},
R_2=-
a\,
\lr{
F_2(\p_x^2\WW), 
\WW
},
R_3=a\,
\lr{
F_3(\p_x^2\WW), 
\WW
},
\nonumber
\\
R_4&=2a\,
\lr{
F_4(\p_x\WW), 
\WW
},
R_5
=2a\,
\lr{
F_5(\p_x\WW), 
\WW
},
R_6=a\,
\lr{
F_6(\p_x\WW), 
\WW
},
\nonumber
\\
R_7&=a\,
\lr{
F_7(\p_x\WW), 
\WW
}.
\nonumber
\end{alignat}
\par 
In what follows, we need to compute more carefully. 
Let us consider $R_1$. 
We start by integrating by parts to see  
\begin{align}
R_1
&=
2a\,
\lr{
\sum_{k}
\left(\p_x^2\WW,
\p_x\left\{J(U)D_{k}(U)U_x\right\}
\right)\nu_{k}(U)
, \WW
}
\nonumber
\\
&\quad
+
2a\,
\lr{
\sum_{k}
\left(\p_x^2\WW,
J(U)D_{k}(U)U_x
\right)
D_k(U)U_x
, \WW
}
\nonumber
\\
&\quad
+
2a\,
\lr{
\sum_{k}
\left(\p_x^2\WW,
J(U)D_{k}(U)U_x
\right)
\nu_{k}(U)
, \p_x\WW
}.
\nonumber
\end{align}
By applying \eqref{eq:key1} to 
the first term of the RHS of the above and 
by applying \eqref{eq:key2} to the third term of the 
RHS of the above, we have 
\begin{align}
R_1
&=
2a\,
\int_{\TT}
\sum_{k}
\left(\p_x^2\WW,
\p_x\left\{J(U)D_{k}(U)U_x\right\}
\right)
\mathcal{O}(|Z|)
\,dx
\nonumber
\\
&\quad
+
2a\,
\lr{
\sum_{k}
\left(\p_x^2\WW,
J(U)D_{k}(U)U_x
\right)
D_k(U)U_x
, \WW
}
\nonumber
\\
&\quad
-
2a\,
\lr{
\sum_{k}
\left(\p_x^2\WW,
J(U)D_{k}(U)U_x
\right)
D_k(U)U_x
, \WW
}
\nonumber
\\
&\quad
-
2a\,
\lr{
\sum_{k}
\left(\p_x^2\WW,
J(U)D_{k}(U)U_x
\right)
D_k(U)Z_x
, \VV
}
\nonumber
\\
&\quad
-
2a\,
\int_{\TT}
\sum_{k}
\left(\p_x^2\WW,
J(U)D_{k}(U)U_x
\right)
\mathcal{O}(|Z|)
\,dx
\nonumber
\\
&= 
2a\,
\int_{\TT}
\sum_{k}
\left(\p_x^2\WW,
\p_x\left\{J(U)D_{k}(U)U_x\right\}
\right)
\mathcal{O}(|Z|)
\,dx
\nonumber
\\
&\quad
-
2a\,
\lr{
\sum_{k}
\left(\p_x^2\WW,
J(U)D_{k}(U)U_x
\right)
D_k(U)Z_x
, \VV
}
\nonumber
\\
&\quad
-
2a\,
\int_{\TT}
\sum_{k}
\left(\p_x^2\WW,
J(U)D_{k}(U)U_x
\right)
\mathcal{O}(|Z|)
\,dx.
\nonumber
\\
\intertext{Here, 
by integrating by parts,  
the first and the third term are bounded by 
$C
\,D(t)^2
$. 
Therefore we obtain  
}
R_1
&\leqslant 
-
2a\,
\lr{
\sum_{k}
\left(\p_x^2\WW,
J(U)D_{k}(U)U_x
\right)
D_k(U)Z_x
, \VV
} 
+C\,D(t)^2. 
\nonumber
\end{align}
Furthermore, 
by using integration by parts, 
$\p_xZ_x=\WW+\mathcal{O}(|Z|+|Z_x|)$,   
\eqref{eq:J(wp)1},
and  
\eqref{eq:D_k},  
we deduce 
\begin{align}
R_1
&\leqslant 
2a\,
\lr{
\sum_{k}
\left(\p_x\WW,
J(U)D_{k}(U)U_x
\right)
D_k(U)\p_xZ_x
, \VV
}
+C\,D(t)^2
\nonumber
\\
&\leqslant 
2a\,
\lr{
\sum_{k}
\left(\p_x\WW,
J(U)D_{k}(U)U_x
\right)
D_k(U)(\WW+\mathcal{O}(|Z|+|Z_x|)), \VV
}
+C\,D(t)^2
\nonumber
\\
&\leqslant 
2a\,
\lr{
\sum_{k}
\left(\p_x\WW,
J(U)D_{k}(U)U_x
\right)
D_k(U)\WW
, \VV
}
+C\,D(t)^2
\nonumber
\\
&\leqslant 
2a\,
\lr{
\sum_{k}
\left(\p_x\WW,
J(U)D_{k}(U)U_x
\right)
D_k(U)\WW
, \UU
}
\nonumber
\\
&\quad
-
2a\,
\lr{
\sum_{k}
\left(\p_x\WW,
J(U)D_{k}(U)U_x
\right)
D_k(U)\WW
, \WW
}
+C\,D(t)^2
\nonumber
\\
&\leqslant 
2a\,
\lr{
\sum_{k}
\left(\p_x\WW,
J(U)D_{k}(U)U_x
\right)
D_k(U)\WW
, \UU
}
+C\,D(t)^2
\nonumber
\\
&=
-2a\,
\lr{
\sum_{k}
\left(J(U)\p_x\WW,
D_{k}(U)U_x
\right)
D_k(U)\UU
, \WW
}
+C\,D(t)^2
\nonumber
\\
&\leqslant 
2a\,
\lr{
\sum_{k}
\left(J(U)\WW,
D_{k}(U)U_x
\right)
D_k(U)\UU
, \p_x\WW
}
+C\,D(t)^2
\nonumber
\\
&=:
R_{11}+R_{12}+C\,D(t)^2, 
\label{eq:2e1}
\end{align}
where 
\begin{align}
R_{11}
&=2a\,
\lr{
\sum_{k}
\left(J(U)\WW,
D_{k}(U)U_x
\right)
P(U)D_k(U)\UU
, \p_x\WW
}, 
\nonumber
\\
R_{12}
&=2a\,
\lr{
\sum_{k}
\left(J(U)\WW,
D_{k}(U)U_x
\right)
N(U)D_k(U)\UU
, \p_x\WW
}.
\nonumber
\end{align}
For $R_{12}$, recall \eqref{eq:key2} to see
$$(N(U)D_k(U)\UU
, \p_x\WW)
=\sum_{\ell}
(D_k(U)\UU,\nu_{\ell}(U))(\nu_{\ell}(U),\p_x\WW)
=
\mathcal{O}
\left(
|Z|+|Z_x|+|\WW|
\right).
$$
This shows $R_{12}\leqslant C\,D(t)^2$. 
For $R_{11}$, by using \eqref{eq:J0} and \eqref{eq:D_k}, 
we see 
$$
R_{11}
=2a\,
\lr{
\sum_{k}
\left(J(U)\WW,
D_{k}(U)U_x
\right)
D_k(U)P(U)\p_x\WW, \UU
}. 
$$
Since $(N(U)D_k(U)P(U)\p_x\WW, \UU)=0$, we have 
$$
R_{11}
=2a\,
\lr{
\sum_{k}
\left(J(U)\WW,
D_{k}(U)U_x
\right)
P(U)D_k(U)P(U)\p_x\WW, \UU
}. 
$$
Applying \eqref{eq:curvature3} in Lemma~\ref{lemma:curvature3} with 
$dw_u(Y_1)=P(U)\p_x\WW$, $dw_u(Y_2)=U_x$, and with 
$dw_u(Y_3)=J(U)\WW$, we obtain 
\begin{align}
R_{11}
&=
2a\,
\lr{
\sum_{k}
\left(J(U)\WW,
D_{k}(U)P(U)\p_x\WW
\right)
P(U)D_k(U)U_x, \UU
}
\nonumber
\\
&\quad
+2aS\,
\lr{
(J(U)\WW, U_x)P(U)\p_x\WW,
\UU
}
-2aS
\lr{
(J(U)\WW, P(U)\p_x\WW)U_x,
\UU
}
\nonumber
\\
&=:R_{111}
+R_{112}+R_{113}.
\nonumber 
\end{align}
Here we recall \eqref{eq:key2} to see   
\begin{equation}
N(U)\p_x\WW
=\sum_k
(\p_x\WW,\nu_k(U))\nu_k(U)
=
\mathcal{O}(|Z|+|Z_x|+|\WW|).
\label{eq:key5}
\end{equation}  
This implies 
$P(U)\p_x\WW=\p_x\WW+\mathcal{O}(|Z|+|Z_x|+|\WW|).$
Using this \eqref{eq:J0}, \eqref{eq:J(wp)1} and 
$P(U)\UU=\UU$,  we obtain  
\begin{align}
R_{111}
&=
-2a\, 
\lr{
\sum_{k}
\left(\WW,
J(U)D_{k}(U)P(U)\p_x\WW
\right)
D_k(U)U_x, P(U)\UU
}
\nonumber
\\
&\leqslant
-2a\, 
\lr{
\sum_{k}
\left(\WW,
J(U)D_{k}(U)\p_x\WW
\right)
D_k(U)U_x, \UU
} 
+C\,D(t)^2
\nonumber
\\
&=
-2a\, 
\lr{
\sum_{k}
(\UU, D_k(U)U_x)
J(U)D_{k}(U)\p_x\WW, 
\WW
} 
+C\,D(t)^2.
\nonumber
\end{align}
In the same way, 
using \eqref{eq:J0}, \eqref{eq:J(wp)1} 
and 
$
\UU=\p_xU_x+
\sum_{\ell}(U_x,D_{\ell}(U)U_x)\nu_{\ell}(U)$,
we obtain 
\begin{align}
R_{112}
&\leqslant 
-2aS\,
\lr{
(\p_x\WW,\p_xU_x)J(U)U_x,\WW
}
+C\,D(t)^2, 
\nonumber
\\
R_{113}
&\leqslant
2aS\,
\lr{
(\p_xU_x,U_x)J(U)\p_x\WW,\WW
}
+C\,D(t)^2.
\nonumber 
\end{align}
Collecting them, 
we obtain 
\begin{align}
R_1&=R_{111}+R_{112}+R_{113}+R_{12}+C\,D(t)^2
\nonumber
\\
&\leqslant 
-2a\, 
\lr{
\sum_{k}
(\UU, D_k(U)U_x)
J(U)D_{k}(U)\p_x\WW, 
\WW
} 
\nonumber
\\
&\quad
-2aS\,
\lr{
(\p_x\WW,\p_xU_x)J(U)U_x,\WW
}
+
2aS\,
\lr{
(\p_xU_x,U_x)J(U)\p_x\WW,\WW
}
\nonumber
\\
&\quad
+C\,D(t)^2.
\label{eq:2e}
\end{align}
The first term of the RHS of \eqref{eq:2e}
is cancelled with 
the same term appearing from the computation of
$R_6+R_7$. 
\par 
We compute 
$R_6+R_7
=
a\,\lr{F_6(\p_x\WW),\WW}
+a\,\lr{F_7(\p_x\WW),\WW}$. 
By noting $J(U)=J(U)P(U)$ and 
by applying \eqref{eq:curvature3} 
with 
$dw_u(Y_1)=U_x$, 
$dw_u(Y_2)=P(U)\p_x\WW$ 
and with 
$dw_u(Y_3)=\UU$, 
we obtain 
\begin{align}
F_6(\p_x\WW)
&=
\sum_{k}
\left(
\UU, 
D_k(U)\p_x\WW
\right)
J(U)D_k(U)U_x
\nonumber
\\
&=
J(U)\sum_{k}
\left(
\UU, 
D_k(U)\p_x\WW
\right)
P(U)D_k(U)U_x
\nonumber
\\
&=
J(U)\sum_{k}
\left(
\UU, 
D_k(U)P(U)\p_x\WW
\right)
P(U)D_k(U)U_x
\nonumber
\\
&\quad
+
J(U)\sum_{k}
\left(
\UU, 
D_k(U)N(U)\p_x\WW
\right)
P(U)D_k(U)U_x
\nonumber
\\
&=
J(U)\sum_{k}
\left(
\UU, 
D_k(U)U_x
\right)
P(U)D_k(U)P(U)\p_x\WW
\nonumber
\\
&\quad
+S\,J(U)
\left\{
(\UU,P(U)\p_x\WW)U_x
-(\UU,U_x)P(U)\p_x\WW
\right\}
\nonumber
\\
&\quad
+
J(U)\sum_{k}
\left(
\UU, 
D_k(U)N(U)\p_x\WW
\right)
P(U)D_k(U)U_x.
\nonumber
\end{align}
Furthermore, 
we use $J(U)=J(U)P(U)$  
and \eqref{eq:key5} to obtain 
\begin{align}
F_6(\p_x\WW)
&=
\sum_{k}
\left(
\UU, 
D_k(U)U_x
\right)
J(U)D_k(U)P(U)\p_x\WW
\nonumber
\\
&\quad
+S\,(\p_x\WW,\UU)J(U)U_x
-S\,(\UU,U_x)J(U)\p_x\WW
+\mathcal{O}
(|Z|+|Z_x|+|\WW)
\nonumber
\\
&=
\sum_{k}
\left(
\UU, 
D_k(U)U_x
\right)
J(U)D_k(U)\p_x\WW
\nonumber
\\
&\quad
+S\,(\p_x\WW,\p_xU_x)J(U)U_x
-S\,(\p_xU_x,U_x)J(U)\p_x\WW
\nonumber
\\
&\quad
+\mathcal{O}
(|Z|+|Z_x|+|\WW)
\nonumber
\\
&=
F_7(\p_x\WW)
+S\,(\p_x\WW,\p_xU_x)J(U)U_x
-S\,(\p_xU_x,U_x)J(U)\p_x\WW
\nonumber
\\
&\quad
+\mathcal{O}
(|Z|+|Z_x|+|\WW). 
\nonumber
\end{align}
Hence we obtain 
\begin{align}
R_6
+
R_7
&\leqslant
2a\,
\lr{
\sum_{k}
\left(
\UU, 
D_k(U)U_x
\right)
J(U)D_k(U)\p_x\WW
,\WW
}
\nonumber
\\
&\quad
+aS\,
\lr{
(\p_x\WW,\p_xU_x)J(U)U_x
,\WW
}
\nonumber
\\
&\quad
-aS\,
\lr{
(\p_xU_x,U_x)J(U)\p_x\WW
,\WW}
+C\,D(t)^2.
\label{eq:1011e}
\end{align}
Combining \eqref{eq:2e} and \eqref{eq:1011e}, and 
using \eqref{eq:D_k}, 
we have 
\begin{align}
&R_1
+
R_6
+
R_7
\nonumber
\\
&\leqslant
-aS\,
\lr{
(\p_x\WW,\p_xU_x)J(U)U_x
,\WW} 
+aS\,
\lr{
(U_x,\p_xU_x)J(U)\p_x\WW
,\WW}
+C\,D(t)^2.
\label{eq:RR167}
\end{align}
\par
Next, we compute 
$R_2+R_3=-a\lr{F_2(\p_x^2\WW),\WW}+a\lr{F_3(\p_x^2\WW),\WW}$. 
As in deriving \eqref{eq:2e}, 
we use \eqref{eq:J(wp)1}, \eqref{eq:D_k}, 
and \eqref{eq:curvature3} with 
$dw_u(Y_1)=U_x$, 
$dw_u(Y_2)=J(U)\p_x^2\WW$ 
and with 
$dw_u(Y_3)=U_x$,
to deduce 
\begin{align}
F_2(\p_x^2\WW)
&=
\sum_k
(\p_x^2\WW,J(U)D_k(U)U_x)D_k(U)U_x
\nonumber
\\
&=
\sum_k
(\p_x^2\WW,J(U)D_k(U)U_x)N(U)D_k(U)U_x
\nonumber
\\
&\quad
+
\sum_k
(\p_x^2\WW,J(U)D_k(U)U_x)P(U)D_k(U)U_x
\nonumber
\\
&=
\sum_k
(\p_x^2\WW,J(U)D_k(U)U_x)N(U)D_k(U)U_x
\nonumber
\\
&\quad
-
\sum_k
(U_x, D_k(U)J(U)\p_x^2\WW)P(U)D_k(U)U_x
\nonumber
\\
&=
\sum_k
(\p_x^2\WW,J(U)D_k(U)U_x)N(U)D_k(U)U_x
\nonumber
\\
&\quad
-
\sum_k
(U_x, D_k(U)U_x)P(U)D_k(U)J(U)\p_x^2\WW
\nonumber
\\
&\quad
-
S\,
\left\{
(U_x,J(U)\p_x^2\WW)U_x
-(U_x,U_x)J(U)\p_x^2\WW
\right\}
\nonumber
\\
&=
-\sum_k
(U_x, D_k(U)U_x)D_k(U)J(U)\p_x^2\WW
\nonumber
\\
&\quad
+S\,(\p_x^2\WW,J(U)U_x)U_x
+S\,|U_x|^2J(U)\p_x^2\WW
\nonumber
\\
&\quad 
+\sum_k
(U_x, D_k(U)U_x)N(U)D_k(U)J(U)\p_x^2\WW
\nonumber
\\
&\quad 
+
\sum_k
(\p_x^2\WW,J(U)D_k(U)U_x)N(U)D_k(U)U_x
\nonumber
\\
&=
-\sum_k
(U_x, D_k(U)U_x)D_k(U)J(U)\p_x^2\WW
\nonumber
\\
&\quad
+S\,(\p_x^2\WW,J(U)U_x)U_x
+S\,|U_x|^2J(U)\p_x^2\WW
\nonumber
\\
&\quad 
+\sum_{\ell}
\mathcal{O}
\left(
|\p_x^2\WW|
\right)\nu_{\ell}(U),
\label{eq:6e1}
\\
\intertext{and in the same way 
we use \eqref{eq:J(wp)1}, \eqref{eq:D_k}, 
and \eqref{eq:curvature3} with 
$dw_u(Y_1)=U_x$, 
$dw_u(Y_2)=\p_x^2\WW$ 
and with 
$dw_u(Y_3)=U_x$,
to deduce }
F_3(\p_x^2\WW)
&=
\sum_{k}
(\p_x^2\WW, D_k(U)U_x)J(U)D_k(U)U_x
\nonumber
\\
&=
J(U)\sum_{k}
(U_x, D_k(U)\p_x^2\WW)P(U)D_k(U)U_x
\nonumber
\\
&=
J(U)\sum_{k}
(U_x, D_k(U)N(U)\p_x^2\WW)P(U)D_k(U)U_x
\nonumber
\\
&\quad
+
J(U)\sum_{k}
(U_x, D_k(U)P(U)\p_x^2\WW)P(U)D_k(U)U_x
\nonumber
\\
&=
J(U)\sum_{k}
(U_x, D_k(U)N(U)\p_x^2\WW)P(U)D_k(U)U_x
\nonumber
\\
&\quad
+
J(U)\sum_{k}
(U_x, D_k(U)U_x)P(U)D_k(U)P(U)\p_x^2\WW
\nonumber
\\
&\quad
+S\,J(U)
\left\{
(U_x, P(U)\p_x^2\WW)U_x
-(U_x,U_x)P(U)\p_x^2\WW
\right\}
\nonumber
\\
&=
\sum_{k}
(U_x, D_k(U)N(U)\p_x^2\WW)J(U)D_k(U)U_x
\nonumber
\\
&\quad
+
\sum_{k}
(U_x, D_k(U)U_x)J(U)D_k(U)P(U)\p_x^2\WW
\nonumber
\\
&\quad
+S\,(\p_x^2\WW,U_x)J(U)U_x
-S\,|U_x|^2J(U)\p_x^2\WW
\nonumber
\\
&=
\sum_{k}
(U_x, D_k(U)U_x)J(U)D_k(U)\p_x^2\WW
\nonumber
\\
&\quad
+S\,(\p_x^2\WW,U_x)J(U)U_x
-S\,|U_x|^2J(U)\p_x^2\WW
\nonumber
\\
&\quad 
+
\sum_{k}
(U_x, D_k(U)N(U)\p_x^2\WW)J(U)D_k(U)U_x
\nonumber
\\
&\quad
-
\sum_{k}
(U_x, D_k(U)U_x)J(U)D_k(U)N(U)\p_x^2\WW.
\label{eq:6e2}
\end{align}
Here, we use \eqref{eq:key22} to see
\begin{align}
N(U)\p_x^2\WW
&=
\sum_{\ell}
(\p_x^2\WW,\nu_{\ell}(U))\nu_{\ell}(U)
\nonumber
\\
&=
-2\sum_{\ell}
(\p_x\WW, D_{\ell}(U)U_x)\nu_{\ell}(U)
+
\mathcal{O}(|Z|+|Z_x|+|\WW|).
\label{eq:6e22}
\end{align}
By substituting \eqref{eq:6e22} into 
the fourth and fifth term of the RHS of \eqref{eq:6e2},
we have 
\begin{align}
F_3(\p_x^2\WW)
&=
\sum_{k}
(U_x, D_k(U)U_x)J(U)D_k(U)\p_x^2\WW
\nonumber
\\
&\quad
+S\,(\p_x^2\WW,U_x)J(U)U_x
-S\,|U_x|^2J(U)\p_x^2\WW
\nonumber
\\
&\quad 
-2
\sum_{k,\ell}
(\p_x\WW, D_{\ell}(U)U_x)
(U_x, D_k(U)\nu_{\ell}(U))J(U)D_k(U)U_x
\nonumber
\\
&\quad
+2
\sum_{k,\ell}
(\p_x\WW, D_{\ell}(U)U_x)
(U_x, D_k(U)U_x)J(U)D_k(U)\nu_{\ell}(U)
\nonumber
\\
&\quad
+
\mathcal{O}
(|Z|+|Z_x|+|\WW|).
\label{eq:6e23}
\end{align}
Thus, from \eqref{eq:6e1}, \eqref{eq:6e23}, 
and \eqref{eq:k1}, 
it follows that 
\begin{align}
&
-F_2(\p_x^2\WW)
+F_3(\p_x^2\WW)
\nonumber
\\
&=
\sum_k
(U_x, D_k(U)U_x)D_k(U)J(U)\p_x^2\WW
+
\sum_{k}
(U_x, D_k(U)U_x)J(U)D_k(U)\p_x^2\WW
\nonumber
\\
&\quad
-S\,(\p_x^2\WW,J(U)U_x)U_x
+S\,(\p_x^2\WW,U_x)J(U)U_x
-2S\,|U_x|^2J(U)\p_x^2\WW
\nonumber
\\
&\quad 
-2
\sum_{k,\ell}
(\p_x\WW, D_{\ell}(U)U_x)
(U_x, D_k(U)\nu_{\ell}(U))J(U)D_k(U)U_x
\nonumber
\\
&\quad
+2
\sum_{k,\ell}
(\p_x\WW, D_{\ell}(U)U_x)
(U_x, D_k(U)U_x)J(U)D_k(U)\nu_{\ell}(U)
\nonumber
\\
&\quad
+\sum_{\ell}
\mathcal{O}
\left(
|\p_x^2\WW|
\right)\nu_{\ell}(U)
+
\mathcal{O}
(|Z|+|Z_x|+|\WW|)
\nonumber
\\
&=
\sum_k
(U_x, D_k(U)U_x)(D_k(U)J(U)+J(U)D_k(U))\p_x^2\WW
\nonumber
\\
&\quad
-S\,|U_x|^2J(U)\p_x^2\WW
\nonumber
\\
&\quad 
-2
\sum_{k,\ell}
(\p_x\WW, D_{\ell}(U)U_x)
(U_x, D_k(U)\nu_{\ell}(U))J(U)D_k(U)U_x
\nonumber
\\
&\quad
+2
\sum_{k,\ell}
(\p_x\WW, D_{\ell}(U)U_x)
(U_x, D_k(U)U_x)J(U)D_k(U)\nu_{\ell}(U)
\nonumber
\\
&\quad
+\sum_{\ell}
\mathcal{O}
\left(
|\p_x^2\WW|
\right)\nu_{\ell}(U)
+
\mathcal{O}
(|Z|+|Z_x|+|\WW|).
\label{eq:6e3}
\end{align}
Therefore, from  \eqref{eq:6e3} and 
$$|U_x|^2J(U)\p_x^2\WW
=\p_x
\left\{
|U_x|^2J(U)\p_x\WW
\right\}
-2(\p_xU_x,U_x)J(U)\p_x\WW
-|U_x|^2\p_x(J(U))\p_x\WW,
$$ 
we see that 
$R_2+R_3
=-a\lr{F_2(\p_x^2\WW),\WW}
+a\lr{F_3(\p_x^2\WW),\WW}$ 
is evaluated as follows:
\begin{align}
R_2
+
R_3
&\leqslant 
a\, 
\left\langle
\p_x\biggl\{
\sum_k
(U_x, D_k(U)U_x)(D_k(U)J(U)+J(U)D_k(U))\p_x\WW
\biggr\}, 
\WW
\right\rangle
\nonumber
\\
&\quad
-a\, 
\left\langle
\p_x\biggl\{
\sum_k
(U_x, D_k(U)U_x)(D_k(U)J(U)+J(U)D_k(U))
\biggr\}
\p_x\WW, 
\WW
\right\rangle
\nonumber
\\
&\quad
-aS\,
\lr{
\p_x\biggl\{
|U_x|^2J(U)\p_x\WW
\biggr\}, \WW
}
+2aS\,
\lr{
(\p_xU_x,U_x)J(U)\p_x\WW
,\WW
}
\nonumber
\\
&\quad
+aS\,
\lr{
|U_x|^2\p_x(J(U))\p_x\WW
,\WW
}
\nonumber
\\
&\quad
-2a\,
\lr{
\sum_{k,\ell}
(\p_x\WW, D_{\ell}(U)U_x)
(U_x, D_k(U)\nu_{\ell}(U))J(U)D_k(U)U_x,\WW
}
\nonumber
\\
&\quad
+2a\,
\lr{
\sum_{k,\ell}
(\p_x\WW, D_{\ell}(U)U_x)
(U_x, D_k(U)U_x)J(U)D_k(U)\nu_{\ell}(U), \WW
}
\nonumber
\\
&\quad 
+
\lr{
\sum_{\ell}
\mathcal{O}
\left(|\p_x^2\WW|\right)\nu_{\ell}(U), 
\WW
}
\nonumber
\\
&\quad
+C\,D(t)^2.
\label{eq:6e4}
\end{align}
Note here that  
$$
((J(U)D_k(U)+D_k(U)J(U))Y_1,Y_2)
=
-(Y_1, (J(U)D_k(U)+D_k(U)J(U))Y_2)
$$
holds for any $Y_1,Y_2:[0,T]\times \TT\to \RR^d$.
This implies that the first term of the RHS of \eqref{eq:6e4}
vanishes. 
Indeed, the integration by parts yields 
\begin{align}
&
a\,\left\langle
\p_x\biggl\{
\sum_k
(U_x, D_k(U)U_x)(D_k(U)J(U)+J(U)D_k(U))\p_x\WW
\biggr\}, 
\WW
\right\rangle
\nonumber
\\
&=
-a\,\left\langle
\sum_k
(U_x, D_k(U)U_x)(D_k(U)J(U)+J(U)D_k(U))\p_x\WW, 
\p_x\WW
\right\rangle
\nonumber
\\
&=0. 
\nonumber
\end{align}
In addition, the third term of the RHS of \eqref{eq:6e4}
vanishes by integrating by parts.
Beside,
due to the presence of $N(U)$, we can bound 
the eighth term of the RHS of \eqref{eq:6e4} 
by $C\,D(t)^2$ 
using the same argument to show \eqref{eq:18e}.
Consequently, we derive 
\begin{align}
R_2+R_3
&\leqslant 
-a\, 
\left\langle
\p_x\biggl\{
\sum_k
(U_x, D_k(U)U_x)(D_k(U)J(U)+J(U)D_k(U))
\biggr\}
\p_x\WW, 
\WW
\right\rangle
\nonumber
\\
&\quad
+2aS\,
\lr{
(\p_xU_x,U_x)J(U)\p_x\WW
,\WW
}
\nonumber
\\
&\quad
-2a\,
\lr{
\sum_{k,\ell}
(\p_x\WW, D_{\ell}(U)U_x)
(U_x, D_k(U)\nu_{\ell}(U))J(U)D_k(U)U_x,\WW
}
\nonumber
\\
&\quad
+2a\,
\lr{
\sum_{k,\ell}
(\p_x\WW, D_{\ell}(U)U_x)
(U_x, D_k(U)U_x)J(U)D_k(U)\nu_{\ell}(U), \WW
}
\nonumber
\\
&\quad
+
C\,D(t)^2.
\label{eq:67e}
\end{align}
The third and fourth term and the bad part of the first term 
of the RHS of \eqref{eq:67e} will be cancelled with the same term 
appearing in the computation of 
$R_4+R_5$.  
\par 
Let us next compute $R_4+R_5$. 
To begin with, 
we introduce 
$T_1(U)$ 
which is  defined by 
\begin{align}
T_1(U)Y
&=
\sum_k
(Y,D_k(U)U_x)J(U)D_k(U)U_x
\nonumber
\end{align}
for any $Y:[0,T]\times \TT\to \RR^d$. 
Substituting this with $Y=\p_x^2\WW$ 
into the RHS of 
$\p_x(T_1(U))\p_x\WW=\p_x\left\{T_1(U)\p_x\WW\right\}-T_1(U)\p_x^2\WW$, 
we can write
\begin{align}
R_4
+
R_5
&=
2a\,
\lr{
\p_x(T_1(U))\p_x\WW
,\WW}. 
\label{eq:89e1}
\end{align}
On the other hand, 
using \eqref{eq:curvature3} with 
$dw_u(Y_1)=U_x$, 
$dw_u(Y_2)=P(U)Y$ 
and with 
$dw_u(Y_3)=U_x$,
we find that  
$T_1(U)Y$ has the following another expression. 
\begin{align}
T_1(U)Y
&=
J(U)\sum_k
(Y,D_k(U)U_x)P(U)D_k(U)U_x
\nonumber
\\
&=
J(U)\sum_k
(P(U)Y,D_k(U)U_x)P(U)D_k(U)U_x
\nonumber
\\
&\quad
+
J(U)\sum_k
(N(U)Y,D_k(U)U_x)P(U)D_k(U)U_x
\nonumber
\\
&=
J(U)\sum_k
(U_x, D_k(U)P(U)Y)P(U)D_k(U)U_x
\nonumber
\\
&\quad
+
J(U)\sum_k
(N(U)Y,D_k(U)U_x)P(U)D_k(U)U_x
\nonumber
\\
&=
J(U)\sum_k
(U_x, D_k(U)U_x)P(U)D_k(U)P(U)Y
\nonumber
\\
&\quad
+
S\,J(U)
\left\{
(U_x,P(U)Y)U_x
-|U_x|^2P(U)Y
\right\}
\nonumber
\\
&\quad
+
J(U)\sum_k
(N(U)Y,D_k(U)U_x)P(U)D_k(U)U_x
\nonumber
\\
&=
\sum_k
(U_x, D_k(U)U_x)J(U)D_k(U)P(U)Y
\nonumber
\\
&\quad
+
S\,(U_x,P(U)Y)J(U)U_x
-S|U_x|^2J(U)Y
\nonumber
\\
&\quad
+
\sum_k
(N(U)Y,D_k(U)U_x)J(U)D_k(U)U_x
\nonumber
\\
&=
\sum_k
(U_x, D_k(U)U_x)J(U)D_k(U)Y
\nonumber
\\
&\quad
-\sum_k
(U_x, D_k(U)U_x)J(U)D_k(U)N(U)Y
\nonumber
\\
&\quad
+
S\,(Y,U_x)J(U)U_x
-S|U_x|^2J(U)Y
\nonumber
\\
&\quad
+
\sum_k
(N(U)Y,D_k(U)U_x)J(U)D_k(U)U_x
\nonumber
\end{align}
for any $Y:[0,T]\times \TT\to \RR^d$. 
If we adopt this formulation, 
we have  
\begin{align}
\p_x(T_1(U))Y
&=
\p_x\left\{
\sum_k
(U_x, D_k(U)U_x)J(U)D_k(U)
\right\}
Y
\nonumber
\\
&\quad
-\p_x\left\{
\sum_k
(U_x, D_k(U)U_x)J(U)D_k(U)\right\}
N(U)Y
\nonumber
\\
&\quad
-\sum_k
(U_x, D_k(U)U_x)J(U)D_k(U)\p_x(N(U))Y
\nonumber
\\
&\quad
+S\,(Y,\p_xU_x)J(U)U_x
+S\,(Y,U_x)\p_x(J(U))U_x
+S\,(Y,U_x)J(U)\p_xU_x
\nonumber
\\
&\quad 
-2S(\p_xU_x,U_x)J(U)Y-S|U_x|^2\p_x(J(U))Y
\nonumber
\\
&\quad
+
\sum_k
(\p_x(N(U))Y,D_k(U)U_x)J(U)D_k(U)U_x
\nonumber
\\
&\quad
+
\sum_k
(N(U)Y,\p_x\left\{D_k(U)U_x\right\})J(U)D_k(U)U_x
\nonumber
\\
&\quad
+
\sum_k
(N(U)Y,D_k(U)U_x)\p_x\left\{J(U)D_k(U)U_x\right\}.
\label{eq:89e2}
\end{align}
By substituting \eqref{eq:89e2} into \eqref{eq:89e1}, 
we have 
\begin{align}
R_4
+
R_5
&=
2a\,
\lr{
\p_x\biggl\{
\sum_k
(U_x, D_k(U)U_x)J(U)D_k(U)
\biggr\}
\p_x\WW,
\WW
}
\nonumber
\\
&\quad
-2a\,
\lr{
\p_x
\biggl\{
\sum_k
(U_x, D_k(U)U_x)J(U)D_k(U)
\biggr\}
N(U)\p_x\WW,
\WW
}
\nonumber
\\
&\quad
-2a\,
\lr{
\sum_k
(U_x, D_k(U)U_x)J(U)D_k(U)\p_x(N(U))\p_x\WW, 
\WW
}
\nonumber
\\
&\quad
+2aS\,
\lr{
(\p_x\WW,\p_xU_x)J(U)U_x,
\WW
}
+2aS\,
\lr{
(\p_x\WW,U_x)\p_x(J(U))U_x,
\WW
}
\nonumber
\\
&\quad
+2aS\,
\lr{
(\p_x\WW,U_x)J(U)\p_xU_x,
\WW
}
-4aS\,
\lr{
(\p_xU_x,U_x)J(U)\p_x\WW,
\WW
}
\nonumber
\\
&\quad
-2aS\,
\lr{
|U_x|^2\p_x(J(U))\p_x\WW,
\WW
}
\nonumber
\\
&\quad
+
2a\,
\lr{
\sum_k
(\p_x(N(U))\p_x\WW,D_k(U)U_x)J(U)D_k(U)U_x,\WW
}
\nonumber
\\
&\quad
+
2a\,
\lr{
\sum_k
(N(U)\p_x\WW,\p_x\left\{D_k(U)U_x\right\})J(U)D_k(U)U_x,\WW
}
\nonumber
\\
&\quad
+
2a\,
\lr{
\sum_k
(N(U)\p_x\WW,D_k(U)U_x)\p_x\left\{J(U)D_k(U)U_x\right\},
\WW
}.
\label{eq:89e3}
\end{align}
The second, the tenth, and the eleventh term of the 
RHS of \eqref{eq:89e3} are bounded by 
$C\,D(t)^2$, in view of \eqref{eq:key5}.  
For the fifth term of the RHS of \eqref{eq:89e3}, 
we use \eqref{eq:kaehler2} and $(U_x,\nu_k(U))=0$ 
to see 
\begin{align}
\p_x(J(U))U_x
&=
\sum_k
\left(U_x,J(U)D_k(U)U_x\right)\nu_k(U),
\label{eq:kaehler4}
\end{align}
which combined with \eqref{eq:key1} implies
$(\p_x(J(U))U_x,\WW)=\mathcal{O}(|Z|)$. 
Therefore, by the integration by parts, 
the fifth term of the RHS of \eqref{eq:89e3} 
is bounded by 
$C\,D(t)^2$.
In the same way, we use \eqref{eq:kaehler2}, \eqref{eq:key1}, 
and \eqref{eq:key2}, 
to have  
\begin{align}
&(\p_x(J(U))\p_x\WW, \WW)
\nonumber
\\
&=
\sum_k
\left(\p_x\WW,J(U)D_k(U)U_x\right)(\nu_k(U),\WW) 
-\sum_k
(\p_x\WW,\nu_k(U))(J(U)D_k(U)U_x,\WW)
\nonumber
\\
&=
\sum_k
\left(\p_x\WW,J(U)D_k(U)U_x\right)\mathcal{O}(|Z|)
+
\mathcal{O}
\left(
(|Z|+|Z_x|+|\WW|)|\WW|\right). 
\nonumber
\end{align}
Thus the integration by parts shows that 
the eighth term of the RHS of \eqref{eq:89e3} 
is bounded by $C\,D(t)^2$.
For, the third and the ninth term of the RHS of 
\eqref{eq:89e3},  
in view of \eqref{eq:key5}, we have 
\begin{align}
\p_x(N(U))\p_x\WW
&=
\sum_{\ell}
(\p_x\WW,D_{\ell}(U)U_x)\nu_{\ell}(U)
+
\sum_{\ell}
(\p_x\WW,\nu_{\ell}(U))D_{\ell}(U)U_x
\nonumber
\\
&=
\sum_{\ell}
(\p_x\WW,D_{\ell}(U)U_x)\nu_{\ell}(U)
+
\mathcal{O}
(|Z|+|Z_x|+|\WW|),
\nonumber
\end{align}
which implies  
\begin{align}
&-2a\,
\lr{
\sum_k
(U_x, D_k(U)U_x)J(U)D_k(U)\p_x(N(U))\p_x\WW, 
\WW
}
\nonumber
\\
&\leqslant 
-2a\,
\lr{
\sum_{k,\ell}
(\p_x\WW, D_{\ell}(U)U_x)
(U_x, D_k(U)U_x)
J(U)D_k(U)\nu_{\ell}(U), 
\WW
}
+
C\,D(t)^2,
\nonumber
\\
\intertext{and}
&
2a\,
\lr{
\sum_k
(\p_x(N(U))\p_x\WW,D_k(U)U_x)J(U)D_k(U)U_x,\WW
}
\nonumber
\\
&\leqslant 
2a\,
\lr{
\sum_{k,\ell}
(\p_x\WW, D_{\ell}(U)U_x)
(\nu_{\ell}(U),D_k(U)U_x)J(U)D_k(U)U_x,\WW
}
+C\,D(t)^2.
\nonumber
\end{align}
Collecting them, we derive 
\begin{align}
R_4
+
R_5
&\leqslant 
2a\,
\lr{
\p_x\biggl\{
\sum_k
(U_x, D_k(U)U_x)J(U)D_k(U)
\biggr\}
\p_x\WW,
\WW
}
\nonumber
\\
&\quad
-2a\,
\lr{
\sum_{k,\ell}
(\p_x\WW, D_{\ell}(U)U_x)
(U_x, D_k(U)U_x)
J(U)D_k(U)\nu_{\ell}(U), 
\WW
}
\nonumber
\\
&\quad
+
2a\,
\lr{
\sum_{k,\ell}
(\p_x\WW, D_{\ell}(U)U_x)
(\nu_{\ell}(U),D_k(U)U_x)J(U)D_k(U)U_x,\WW
}
\nonumber
\\
&\quad
+2aS\,
\lr{
(\p_x\WW,\p_xU_x)J(U)U_x,
\WW
}
+2aS\,
\lr{
(\p_x\WW,U_x)J(U)\p_xU_x,
\WW
}
\nonumber
\\
&\quad
-4aS\,
\lr{
(\p_xU_x,U_x)J(U)\p_x\WW,
\WW
}
+
C\,D(t)^2.
\label{eq:89e}
\end{align}
\par 
Therefore, by \eqref{eq:67e} and \eqref{eq:89e}, 
we obtain 
\begin{align}
&R_2+R_3+R_4+R_5
\nonumber
\\
&\leqslant 
a\,
\lr{
\p_x\biggl\{
\sum_k
(U_x, D_k(U)U_x)(J(U)D_k(U)-D_k(U)J(U))
\biggr\}
\p_x\WW,
\WW
}
\nonumber
\\
&\quad
+2aS\,
\lr{
(\p_x\WW,\p_xU_x)J(U)U_x,
\WW
}
+2aS\,
\lr{
(\p_x\WW,U_x)J(U)\p_xU_x,
\WW
}
\nonumber
\\
&\quad
-2aS\,
\lr{
(\p_xU_x,U_x)J(U)\p_x\WW,
\WW
}
+
C\,D(t)^2.
\label{eq:R2345}
\end{align}
Note here that  
\begin{align}
&\left(
\sum_k(\UU,D_k(U)U_x)(J(U)D_k(U)-D_k(U)J(U))Y_1,Y_2
\right)
\nonumber
\\
&=
\left(
Y_1, \sum_k(\UU,D_k(U)U_x)(J(U)D_k(U)-D_k(U)J(U))Y_2
\right)
\label{eq:sym41}
\end{align}
holds for any $Y_1,Y_2:[0,T]\times \TT\to \RR^d$. 
This follows immediately from \eqref{eq:J(wp)1} and 
\eqref{eq:D_k}. 
By taking the derivative of both sides of \eqref{eq:sym41} in $x$, 
we see that  
\begin{align}
&\left(
\p_x\left\{
\sum_k(\UU,D_k(U)U_x)(J(U)D_k(U)-D_k(U)J(U))
\right\}
Y_1,Y_2
\right)
\nonumber
\\
&=
\left(
Y_1, \p_x\left\{
\sum_k(\UU,D_k(U)U_x)(J(U)D_k(U)-D_k(U)J(U))
\right\}
Y_2
\right)
\nonumber 
\end{align}
holds for any $Y_1,Y_2:[0,T]\times \TT\to \RR^d$. 
Hence, by integrating by parts,  
we see that 
the first term of the RHS of \eqref{eq:R2345}
is bounded by $C\,D(t)^2$. 
Therefore we obtain 
\begin{align}
&R_2+R_3+R_4+R_5
\nonumber
\\
&\leqslant 
2aS\,
\lr{
(\p_x\WW,\p_xU_x)J(U)U_x,
\WW
}
+2aS\,
\lr{
(\p_x\WW,U_x)J(U)\p_xU_x,\WW
}
\nonumber
\\
&\quad
-2aS\,
\lr{
(\p_xU_x,U_x)J(U)\p_x\WW,\WW
}
+
C\,D(t)^2.
\label{eq:RR2345}
\end{align}
\par 
Gathering the information 
\eqref{eq:nn}, 
\eqref{eq:RR167}, 
and 
\eqref{eq:RR2345},
we derive  
\begin{align}
\frac{1}{2}
\frac{d}{dt}
\|\WW\|_{L^2}^2
&\leqslant 
(c+aS)\,
\lr{
(\p_x^2\WW,U_x)J(U)U_x, 
\WW
}
\nonumber
\\
&\quad 
+(-aS+2b+c)\, 
\lr{
(\p_xU_x,U_x)
J(U)\p_x\WW, 
\WW
}
\nonumber
\\
&\quad
+
(2aS+2b+2c)\, 
\lr{
(\p_x\WW,U_x)J(U)\p_xU_x, 
\WW
}
\nonumber
\\
&\quad 
+
(aS+3c)\,
\lr{
(\p_x\WW,\p_xU_x)J(U)U_x, 
\WW
}
+C\,D(t)^2.
\label{eq:WWt2}
\end{align}
Furthermore we rewrite the third and fourth term of the RHS 
of \eqref{eq:WWt2} recalling Definition~\ref{definition:symmetry_uniqueness} 
and Lemma~\ref{lemma:symmetry_uniqueness}. 
From Definition~\ref{definition:symmetry_uniqueness}, 
it follows that 
\begin{align} 
&(\p_x\WW,U_x)J(U)\p_xU_x
\nonumber
\\
&=
\frac{1}{2}
\left(
T_3(U)-T_4(U)+T_5(U)
\right)\p_x\WW
+
\frac{1}{2}
\sum_k
(U_x,D_k(U)U_x)
(\p_x\WW,\nu_k(U))J(U)U_x. 
\nonumber
\end{align}
Using \eqref{eq:tae2} and \eqref{eq:tae3} with $Y=\p_x\WW$,  
we see 
\begin{align}
T_5(U)\p_x\WW
&=
(\p_xU_x,U_x)J(U)\p_x\WW
+\frac{1}{2}
|U_x|^2\p_x(J(U))\p_x\WW
\nonumber
\\
&\quad
+\frac{1}{2}(\p_x\WW,U_x)
\sum_k
(J(U)U_x,D_k(U)U_x)\nu_k(U)
\nonumber
\\
&\quad 
-\frac{1}{2}
\sum_k
(\p_x\WW,\nu_k(U))
(J(U)U_x,D_k(U)U_x)U_x.
\nonumber
\end{align}
Substituting this and using \eqref{eq:key2}, 
we obtain 
\begin{align}
&(\p_x\WW,U_x)J(U)\p_xU_x
\nonumber
\\
&=
\frac{1}{2}(\p_xU_x,U_x)J(U)\p_x\WW
+
\frac{1}{2}
\left(
T_3(U)-T_4(U)
\right)\p_x\WW
\nonumber
\\
&\quad
+
\frac{1}{4}
|U_x|^2\p_x(J(U))\p_x\WW
+\frac{1}{4}(\p_x\WW,U_x)
\sum_k
(J(U)U_x,D_k(U)U_x)\nu_k(U)
\nonumber
\\
&\quad
+\mathcal{O}(|Z|+|Z_x|+|\WW|). 
\label{eq:tae6}
\end{align}
In the same way,  by using Definition~\ref{definition:symmetry_uniqueness} 
and Lemma~\ref{lemma:symmetry_uniqueness},
we obtain 
\begin{align}
&(\p_x\WW,\p_xU_x)J(U)U_x
\nonumber
\\
&=
\frac{1}{2}(\p_xU_x,U_x)J(U)\p_x\WW
+
\frac{1}{2}
\left(
T_3(U)+T_4(U)
\right)\p_x\WW
\nonumber
\\
&\quad
+
\frac{1}{4}
|U_x|^2\p_x(J(U))\p_x\WW
+\frac{1}{4}(\p_x\WW,U_x)
\sum_k
(J(U)U_x,D_k(U)U_x)\nu_k(U)
\nonumber
\\
&\quad
+\mathcal{O}(|Z|+|Z_x|+|\WW|).
\label{eq:tae7}
\end{align}
Thanks to \eqref{eq:tae4} and \eqref{eq:tae5}, 
we can easily show 
$\lr{T_i(U)\p_x\WW,\WW}
\leqslant 
C\,D(t)^2$ with $i=3,4$,  
by integrating by parts. 
Besides, 
it is now immediate to see 
\begin{align}
&\lr{
|U_x|^2\p_x(J(U))\p_x\WW
,
\WW
}
\leqslant 
C\,D(t)^2,
\nonumber
\\
&\lr{
(\p_x\WW,U_x)
\sum_k
(J(U)U_x,D_k(U)U_x)\nu_k(U)
\WW
}
\leqslant 
C\,D(t)^2
\nonumber
\end{align}
by the argument 
using \eqref{eq:kaehler2} and \eqref{eq:key1}, 
Therefore, 
we substitute \eqref{eq:tae6} and \eqref{eq:tae7} 
into \eqref{eq:WWt2} to derive   
\begin{align}
&\frac{1}{2}
\frac{d}{dt}
\|\WW\|_{L^2}^2
\nonumber
\\
&\leqslant 
(c+aS)\,
\lr{
(\p_x^2\WW,U_x)J(U)U_x, 
\WW
} 
+(-aS+2b+c)\, 
\lr{
(\p_xU_x,U_x)
J(U)\p_x\WW, 
\WW
}
\nonumber
\\
&\quad
+
(aS+b+c)\, 
\lr{
(\p_xU_x,U_x)J(U)\p_x\WW, 
\WW
}
\nonumber
\\
&\quad 
+
\frac{aS+3c}{2}\,
\lr{
(\p_xU_x,U_x)J(U)\p_x\WW, 
\WW
}
+C\,D(t)^2.
\nonumber
\\
&=
(c+aS)\,
\lr{
(\p_x^2\WW,U_x)J(U)U_x, 
\WW
}
\nonumber
\\
&\quad 
+\frac{aS+6b+7c}{2}\, 
\lr{
(\p_xU_x,U_x)
J(U)\p_x\WW, 
\WW
}
+C\,D(t)^2.
\label{eq:WWt}
\end{align}  
Even if we use the integration parts, 
the first and the second term of 
the RHS of \eqref{eq:WWt} cannot be bounded by 
$C\,D(t)^2$.  
Fortunately, however, we will find in the next step
that the two terms can be eliminated essentially
by introducing a gauged function.
\vspace{0.5em}
\\
{\bf 4. Energy estimates for  
$\|\TW\|_{L^2(\TT;\RR^d)}$  
to eliminate the loss of derivatives.} 
\\
We introduce the function $\TW$ 
which is defined by   
\begin{align}
\widetilde{\WW}
&=
\WW+\widetilde{\Lambda},
\label{eq:sag1}
\end{align}
where 
\begin{align}
\widetilde{\Lambda}
&=
-\frac{e_1}{2a}(Z,J(U)U_x)J(U)U_x
+
\frac{e_2}{8a}|U_x|^2Z,
\label{eq:sag2}
\\
e_1&=aS+c, 
\quad 
e_2=e_1+\frac{aS+6b+7c}{2}.
\label{eq:e1e2}
\end{align}
Moreover, we introduce the energy $\widetilde{D}(t)$ whose square is 
defined by
\begin{align}
\widetilde{D}(t)^2
&=
\|Z(t)\|_{L^2}^2
+\|Z_x(t)\|_{L^2}^2
+\|\TW(t)\|_{L^2}^2.
\label{eq:menergy2}
\end{align}
Since $u$ and $v$ satisfy the same initial value, 
$\widetilde{D}(0)=0$ holds. 
We shall show that  
there exists a positive constant $C$ such that
\begin{equation}
\frac{1}{2}
\frac{d}{dt}\widetilde{D}(t)^2
\leqslant C\, 
\widetilde{D}(t)^2
\label{eq:De}
\end{equation}
for all $t\in (0,T)$. 
If it is true,  
\eqref{eq:De} together with $\widetilde{D}(0)=0$ shows $\widetilde{D}(t)\equiv 0$. 
This implies $Z=0$.  
\par
In the proof of \eqref{eq:De}, 
by integrating by parts repeatedly,
it is now not difficult to obtain the following estimate 
permitting the loss of derivatives of order one:  
\begin{equation}
\frac{1}{2}\frac{d}{dt}
\left\{
\|Z(t)\|_{L^2}^2
+
\|Z_x(t)\|_{L^2}^2
\right\}
\leqslant 
C\, \widetilde{D}(t)^2.  
\label{eq:notmainineq}
\end{equation}
Having them in mind, 
we hereafter concentrate on how to derive 
the estimate of the form
\begin{equation}
\frac{1}{2}\frac{d}{dt}
\|\TW(t)\|_{L^2}^2
\leqslant 
C\, \widetilde{D}(t)^2. 
\label{eq:mainineq}
\end{equation}
For this purpose, 
we begin with 
\begin{align}
\frac{1}{2}
\frac{d}{dt}
\|\TW\|_{L^2}^2
&=
\lr{
\p_t\TW,
\TW
}
\nonumber
\\
&=
\lr{
\p_t\WW,
\TW
}
+
\lr{
\p_t\widetilde{\Lambda},
\TW
}
\nonumber
\\
&=
\lr{
\p_t\WW,\WW
}
+
\lr{
\p_t\WW,\widetilde{\Lambda}
}
+
\lr{
\p_t\widetilde{\Lambda},\TW
}.
\label{eq:TW1}
\end{align}
The first term of the RHS of \eqref{eq:TW1} 
has already been investigated to satisfy 
\eqref{eq:WWt}.
Hence we compute the second and the third term 
of the RHS of \eqref{eq:TW1} below. 
Observing $\widetilde{\Lambda}=\mathcal{O}(|Z|)$, 
we see 
$\TW=\WW+\mathcal{O}(|Z|)$, 
$\p_x\TW=\p_x\WW+\mathcal{O}(|Z|+|Z_x|)$, 
and 
$\p_x^2\TW=\p_x^2\WW+\mathcal{O}(|Z|+|Z_x|+|\TW|)$, 
which will be often used without comments. 
\par
We start the computation of 
$\lr{
\p_t\widetilde{\Lambda},\TW}$ 
by investigating $\p_t\widetilde{\Lambda}$. 
A simple computation shows    
\begin{align}
\p_t\widetilde{\Lambda}
&=
-\frac{e_1}{2a}(Z_t,J(U)U_x)J(U)U_x
+
\frac{e_2}{8a}|U_x|^2Z_t
+\mathcal{O}(|Z).
\label{eq:aya1}
\end{align} 
Recalling \eqref{eq:U_t}, we see  
\begin{align}
Z_t
&=
a\,\p_x\left(
J(U)\p_x\WW
\right)
+
a\,
\sum_k
(J(U)\p_x\WW, D_k(U)U_x)\nu_k(U)
+
\mathcal{O}(|Z|+|Z_x|+|\TW|)
\label{eq:aya2}
\\
&=
a\,J(U)\p_x^2\WW
+a\,\p_x(J(U))\p_x\WW
+
a\,
\sum_k
(J(U)\p_x\WW, D_k(U)U_x)\nu_k(U)
\nonumber
\\
&\quad
+
\mathcal{O}(|Z|+|Z_x|+|\TW|).
\label{eq:aya3}
\end{align}
By using \eqref{eq:aya3}, we see
\begin{align}
&-\frac{e_1}{2a}(Z_t,J(U)U_x)J(U)U_x
\nonumber
\\
&=
-\frac{e_1}{2}(J(U)\p_x^2\WW,J(U)U_x)J(U)U_x
-\frac{e_1}{2}(\p_x(J(U))\p_x\WW,J(U)U_x)J(U)U_x
\nonumber
\\
&\quad 
-\frac{e_1}{2}
\sum_k
(J(U)\p_x\WW, D_k(U)U_x)(\nu_k(U),J(U)U_x)J(U)U_x
+
\mathcal{O}(|Z|+|Z_x|+|\TW|).
\nonumber
\end{align}
The third term of the RHS vanishes, since $(\nu_k(U),J(U)U_x)=0$. 
By noting \eqref{eq:0yumi}, we see that 
the second term of the RHS is $\mathcal{O}(|Z|+|Z_x|+|\TW|)$.
Thus we have 
\begin{align}
-\frac{e_1}{2a}(Z_t,J(U)U_x)J(U)U_x
&=
-\frac{e_1}{2}(\p_x^2\WW,U_x)J(U)U_x
+\mathcal{O}(|Z|+|Z_x|+|\TW|).
\label{eq:aya4}
\end{align}
On the other hand, by using \eqref{eq:aya2}, 
we obtain 
\begin{align}
\frac{e_2}{8a}|U_x|^2Z_t 
&=
\frac{e_2}{8}|U_x|^2
\p_x\left(
J(U)\p_x\WW
\right)
+
\frac{e_2}{8}|U_x|^2
\sum_k
(J(U)\p_x\WW, D_k(U)U_x)\nu_k(U)
\nonumber
\\
&\quad
+\mathcal{O}(|Z|+|Z_x|+|\TW|)
\nonumber
\\
&=
\frac{e_2}{8}
\p_x\left\{
|U_x|^2
J(U)\p_x\WW
\right\}
-
\frac{e_2}{4}
(\p_xU_x,U_x)J(U)\p_x\WW
\nonumber
\\
&\quad
+
\sum_k
\mathcal{O}
\left(
|\p_x\WW|
\right)
\nu_k(U)
+\mathcal{O}(|Z|+|Z_x|+|\TW|).
\label{eq:aya5}
\end{align}
By substituting \eqref{eq:aya4} and \eqref{eq:aya5} 
into \eqref{eq:aya1}, 
we obtain 
\begin{align}
\p_t\widetilde{\Lambda}
&=
-\frac{e_1}{2}(\p_x^2\WW,U_x)J(U)U_x
+
\frac{e_2}{8}
\p_x\left\{
|U_x|^2
J(U)\p_x\WW
\right\}
-
\frac{e_2}{4}
(\p_xU_x,U_x)J(U)\p_x\WW
\nonumber
\\
&\quad
+
\sum_k
\mathcal{O}
\left(
|\p_x\WW|
\right)
\nu_k(U)
+\mathcal{O}(|Z|+|Z_x|+|\TW|).
\nonumber
\end{align}
This shows that 
\begin{align}
\lr{
\p_t\widetilde{\Lambda},\TW
}
&=
-\frac{e_1}{2}
\lr{
(\p_x^2\WW,U_x)J(U)U_x,\WW+\mathcal{O}(|Z|)
}
\nonumber
\\
&\quad
+\frac{e_2}{8}
\lr{
\p_x\left\{
|U_x|^2
J(U)\p_x\WW
\right\},\WW+\mathcal{O}(|Z|)
}
\nonumber
\\
&\quad
-\frac{e_2}{4}
\lr{
(\p_xU_x,U_x)J(U)\p_x\WW,\WW+\mathcal{O}(|Z|)
}
\nonumber
\\
&\quad
+
\lr{ 
\sum_k
\mathcal{O}
\left(
|\p_x\WW|
\right)
\nu_k(U)
, \WW+\mathcal{O}(|Z|)
}
\nonumber
\\
&\quad 
+\lr{
\mathcal{O}(|Z|+|Z_x|+|\TW|),\WW+\mathcal{O}(|Z|)
}
\nonumber
\\
&\leqslant 
-\frac{e_1}{2}
\lr{
(\p_x^2\WW,U_x)J(U)U_x,\WW
}
+\frac{e_2}{8}
\lr{
\p_x\left\{
|U_x|^2
J(U)\p_x\WW
\right\},\WW
}
\nonumber
\\
&\quad
-\frac{e_2}{4}
\lr{
(\p_xU_x,U_x)J(U)\p_x\WW,\WW
}
\nonumber
\\
&\quad
+
\lr{ 
\sum_k
\mathcal{O}
\left(
|\p_x\WW|
\right)
\nu_k(U), \WW
} 
+C\,\widetilde{D}(t)^2
\nonumber
\\
&\leqslant
-\frac{e_1}{2}
\lr{
(\p_x^2\WW,U_x)J(U)U_x,\WW
}
-\frac{e_2}{4}
\lr{
(\p_xU_x,U_x)J(U)\p_x\WW,\WW
}
\nonumber
\\
&\quad 
+C\,\widetilde{D}(t)^2.
\label{eq:tom1}
\end{align}
\par
We next compute 
$\lr{\p_t\WW,\widetilde{\Lambda}}$. 
Observing \eqref{eq:eqWW}, we see  
\begin{align}
\p_t\WW
&=a\,\p_x^2\left\{
J(U)\p_x^2\WW
\right\} 
-
2a\,
\sum_{k}
\left(\p_x^3\WW,
J(U)D_{k}(U)U_x
\right)\nu_{k}(U)
\nonumber
\\
&\quad
+
\mathcal{O}(|Z|+|Z_x|+|\WW|+|\p_x\WW|+|\p_x^2\WW|).
\nonumber
\end{align}
By using this and  by noting $\widetilde{\Lambda}=\mathcal{O}(|Z|)$, 
we integrate by parts to obtain 
\begin{align}
\lr{
\p_t\WW,\widetilde{\Lambda}
}
&\leqslant 
R_8+R_9
+C\,\widetilde{D}(t)^2, 
\label{eq:ma1}
\end{align}
where 
\begin{align}
R_8&=a\,
\lr{
\p_x^2\left\{
J(U)\p_x^2\WW
\right\},\widetilde{\Lambda}
}, 
\nonumber
\\
R_9
&=
-2a\,
\lr{
\sum_{k}
\left(\p_x^3\WW,
J(U)D_{k}(U)U_x
\right)\nu_{k}(U),\widetilde{\Lambda}
}.
\nonumber
\end{align}
For $R_9$, noting $\widetilde{\Lambda}=\mathcal{O}(|Z|)$,
we use the integration by parts 
and $(\nu_k(U),J(U)U_x)=0$ to obtain 
\begin{align}
R_9
&\leqslant 
-2a\,(-1)^3
\lr{
\sum_{k}
\left(\WW,
J(U)D_{k}(U)U_x
\right)\nu_{k}(U),\p_x^3\widetilde{\Lambda}
}
+
C\,\widetilde{D}(t)^2
\nonumber
\\
&\leqslant
2a\,
\lr{
\sum_{k}
\left(\WW,
J(U)D_{k}(U)U_x
\right)\nu_{k}(U),
-\frac{e_1}{2a}(\p_x^3Z,J(U)U_x)J(U)U_x
+
\frac{e_2}{8a}|U_x|^2\p_x^3Z
}
\nonumber
\\
&\quad 
+
C\,\widetilde{D}(t)^2
\nonumber
\\
&=\frac{e_2}{4}
\lr{
\sum_{k}
\left(\WW,
J(U)D_{k}(U)U_x
\right)\nu_{k}(U),
|U_x|^2\p_x^3Z
} 
+
C\,\widetilde{D}(t)^2.
\nonumber
\end{align}
Furthermore,
since 
$\p_x^3Z=\p_x^2Z_x
=
\p_x\WW
+
\mathcal{O}(|Z|+|Z_x|+|\WW|)
=\p_x\WW+\mathcal{O}(|Z|+|Z_x|+|\TW|)$
and 
$(\nu_k(U),\p_x\WW)=\mathcal{O}(|Z|+|Z_x|+|\WW|)$, 
we have 
\begin{align}
R_9
&\leqslant 
\frac{e_2}{4}
\lr{
\sum_{k}
\left(\WW,
J(U)D_{k}(U)U_x
\right)\nu_{k}(U),
|U_x|^2\p_x\WW
} 
+
C\,\widetilde{D}(t)^2
\leqslant 
C\,\widetilde{D}(t)^2.
\label{eq:ma2}
\end{align}
For $R_8$, 
we begin with   
\begin{align}
R_8
&=
-\frac{e_1}{2}\,
\lr{
\p_x^2\left\{
J(U)\p_x^2\WW
\right\}, (Z,J(U)U_x)J(U)U_x
}
+
\frac{e_2}{8}\,
\lr{
\p_x^2\left\{
J(U)\p_x^2\WW
\right\}, |U_x|^2Z
}
\nonumber
\\
&=:R_{81}+R_{82}.
\nonumber
\end{align}
The integration by parts implies 
\begin{align}
R_{81}
&=
-\frac{e_1}{2}\,
\lr{
J(U)\p_x^2\WW, 
\p_x^2\left\{(Z,J(U)U_x)J(U)U_x\right\}
}
\nonumber
\\
&\leqslant 
-\frac{e_1}{2}\,
\lr{
J(U)\p_x^2\WW, 
(\p_xZ_x,J(U)U_x)J(U)U_x
}
\nonumber
\\
&\quad
-e_1\,
\lr{
J(U)\p_x^2\WW, 
(Z_x,\p_x\left\{J(U)U_x\right\})J(U)U_x
}
\nonumber
\\
&\quad  
-e_1\,
\lr{
J(U)\p_x^2\WW, 
(Z_x,J(U)U_x)\p_x\left\{J(U)U_x\right\}
}
+C\,\widetilde{D}(t)^2
\nonumber
\\
&\leqslant 
-\frac{e_1}{2}\,
\lr{
J(U)\p_x^2\WW, 
(\p_xZ_x,J(U)U_x)J(U)U_x
}
\nonumber
\\
&\quad
+e_1\,
\lr{
J(U)\p_x\WW, 
(\p_xZ_x,\p_x\left\{J(U)U_x\right\})J(U)U_x
}
\nonumber
\\
&\quad  
+e_1\,
\lr{
J(U)\p_x\WW, 
(\p_xZ_x,J(U)U_x)\p_x\left\{J(U)U_x\right\}
}
+C\,\widetilde{D}(t)^2
\nonumber
\\
&=:R_{83}+R_{84}+R_{85}+C\,\widetilde{D}(t)^2.
\label{eq:ma4}
\end{align}
Since $\UU=\p_xU_x+\displaystyle\sum_{k}(U_x,D_k(U)U_x)\nu_k(U)$ 
and $\VV=\p_xV_x+\displaystyle\sum_{k}(V_x,D_k(V)V_x)\nu_k(V)$,  
we see 
$$\p_xZ_x=\WW-\sum_{k}(Z_x,D_k(U)U_x)\nu_k(U)
-\sum_k(V_x,D_k(U)Z_x)\nu_k(U)
+
\mathcal{O}(|Z|), 
$$
and thus
$
(\p_xZ_x, J(U)U_x)
=
(\WW, J(U)U_x)
+\mathcal{O}(|Z|)$. 
Substituting this, using \eqref{eq:J(wp)1} and \eqref{eq:J(wp)2}, 
and integrating by parts, 
we have 
\begin{align}
R_{83}&=
-\frac{e_1}{2}\,
\lr{
\p_x^2\WW, 
(\p_xZ_x,J(U)U_x)U_x
}
\nonumber
\\
&\leqslant 
-\frac{e_1}{2}\,
\lr{
\p_x^2\WW, 
(\WW,J(U)U_x)U_x
}
-\frac{e_1}{2}\,
\lr{
\p_x^2\WW, 
\mathcal{O}(|Z|)
}
+C\,\widetilde{D}(t)^2
\nonumber
\\
&\leqslant 
-\frac{e_1}{2}\,
\lr{
(\p_x^2\WW, U_x)J(U)U_x, \WW
}
+C\,\widetilde{D}(t)^2.
\label{eq:ma5}
\end{align}
From \eqref{eq:kaehler4}, it follows that 
\begin{equation}
\p_x\left\{J(U)U_x\right\}
=
J(U)\p_xU_x+\sum_k(U_x,J(U)D_k(U)U_x)\nu_k(U).
\nonumber
\end{equation}
Using this, $\p_xZ_x=\WW+\mathcal{O}(|Z|+|Z_x|)$, 
and \eqref{eq:key1}, 
we see 
\begin{align}
&(\p_xZ_x, \p_x\left\{J(U)U_x\right\})
\nonumber
\\
&=
(\p_xZ_x, J(U)\p_xU_x)
+\sum_k(U_x,J(U)D_k(U)U_x)(\nu_k(U),\p_xZ_x)
+
\mathcal{O}(|Z|+|Z_x|)
\nonumber
\\
&=
(\WW, J(U)\p_xU_x)
+\sum_k(U_x,J(U)D_k(U)U_x)(\nu_k(U),\WW)
+
\mathcal{O}(|Z|+|Z_x|)
\nonumber
\\
&=(\WW, J(U)\p_xU_x)+
\mathcal{O}(|Z|+|Z_x|).
\nonumber
\end{align}
This implies 
\begin{align}
R_{84}&=
e_1\,
\lr{
\p_x\WW, 
(\p_xZ_x,\p_x\left\{J(U)U_x\right\})U_x
}
\nonumber
\\
&\leqslant 
e_1\,
\lr{
\p_x\WW, 
(\WW, J(U)\p_xU_x)U_x
}
+C\,\widetilde{D}(t)^2
\nonumber
\\
&= 
e_1\,
\lr{
(\p_x\WW, U_x)
J(U)\p_xU_x, \WW
}
+C\,\widetilde{D}(t)^2.
\label{eq:ma6}
\end{align}
In the same way, we use \eqref{eq:J(wp)2} to see   
\begin{align}
&(J(U)\p_x\WW, \p_x\left\{J(U)U_x\right\})
\nonumber
\\
&=
(J(U)\p_x\WW, J(U)\p_xU_x)
+
\sum_k(U_x,J(U)D_k(U)U_x)
(J(U)\p_x\WW, \nu_k(U))
\nonumber
\\
&=
(\p_x\WW, P(U)\p_xU_x)
\nonumber
\\
&=
(\p_x\WW, \p_xU_x+\sum_k(U_x, D_k(U)U_x)\nu_k(U))
\nonumber
\\
&=
(\p_x\WW,\p_xU_x)
+
\mathcal{O}(|Z|+|Z_x|+|\WW|).
\nonumber
\end{align}
Substituting this, we obtain 
\begin{align}
R_{85}
&=e_1\,
\lr{
J(U)\p_x\WW, 
(\p_xZ_x,J(U)U_x)\p_x\left\{J(U)U_x\right\}
}
\nonumber
\\
&\leqslant
e_1\,
\lr{
\p_x\WW, 
(\p_xZ_x,J(U)U_x)\p_xU_x
}
+C\,\widetilde{D}(t)^2
\nonumber
\\
&\leqslant
e_1\,
\lr{
\p_x\WW, 
(\WW,J(U)U_x)\p_xU_x
}
+C\,\widetilde{D}(t)^2
\nonumber
\\
&=
e_1\,
\lr{
(\p_x\WW, \p_xU_x)J(U)U_x,\WW
}
+C\,\widetilde{D}(t)^2.
\label{eq:ma7}
\end{align}
Substituting \eqref{eq:ma5}, \eqref{eq:ma6}, and \eqref{eq:ma7}
into \eqref{eq:ma4}, 
we see $R_{81}=R_{83}+R_{84}+R_{85}+C\,\widetilde{D}(t)^2$
is bounded as follows:
\begin{align}
R_{81}
&\leqslant 
-\frac{e_1}{2}\,
\lr{
(\p_x^2\WW, U_x)J(U)U_x, \WW
}
+
e_1\,
\lr{
(\p_x\WW, U_x)
J(U)\p_xU_x, \WW
}
\nonumber
\\
&\quad
+
e_1\,
\lr{
(\p_x\WW, \p_xU_x)J(U)U_x,\WW
}
+C\,\widetilde{D}(t)^2.
\nonumber
\end{align}
Furthermore, by applying \eqref{eq:tae6} and \eqref{eq:tae7} 
to the second and the third term of the RHS of above, 
and by using 
$\lr{T_3(U)\p_x\WW,\WW}\leqslant C\,\widetilde{D}(t)^2$ again, 
we deduce 
\begin{align}
R_{81}
&\leqslant 
-\frac{e_1}{2}\,
\lr{
(\p_x^2\WW, U_x)J(U)U_x, \WW
}
+
e_1\,
\lr{
(\p_xU_x,U_x)J(U)\p_x\WW, \WW
}
\nonumber
\\
&\quad
+
e_1\,
\lr{
T_3(U)\p_x\WW,\WW
}
+C\,\widetilde{D}(t)^2
\nonumber
\\
&\leqslant 
-\frac{e_1}{2}\,
\lr{
(\p_x^2\WW, U_x)J(U)U_x, \WW
}
+
e_1\,
\lr{
(\p_xU_x,U_x)J(U)\p_x\WW, \WW
}
+C\,\widetilde{D}(t)^2.
\label{eq:ma8}
\end{align}
For $R_{82}$, the integration by parts and the same argument as above 
lead to  
\begin{align}
R_{82}
&=
\frac{e_2}{8}
\lr{
J(U)\p_x^2\WW, \p_x^2\left\{|U_x|^2Z\right\}
}
\nonumber
\\
&=
\frac{e_2}{8}
\lr{
J(U)\p_x^2\WW, |U_x|^2\p_xZ_x+4(\p_xU_x,U_x)Z_x+\mathcal{O}(|Z|)
}
\nonumber
\\
&\leqslant 
\frac{e_2}{8}
\lr{
|U_x|^2J(U)\p_x^2\WW, \WW
} 
+
\frac{e_2}{2}
\lr{
J(U)\p_x^2\WW,(\p_xU_x,U_x)Z_x 
}
+C\,\widetilde{D}(t)^2
\nonumber
\\
&\leqslant 
\frac{e_2}{8}
\lr{
\p_x
\left\{|U_x|^2J(U)\p_x\WW\right\}, \WW
}
-
\frac{e_2}{4}
\lr{
(\p_xU_x,U_x)J(U)\p_x\WW, \WW
}
\nonumber
\\
&\quad
-
\frac{e_2}{8}
\lr{
|U_x|^2\p_x(J(U))\p_x\WW, \WW
} 
-
\frac{e_2}{2}
\lr{
J(U)\p_x\WW,(\p_xU_x,U_x)\WW
}
+C\,\widetilde{D}(t)^2
\nonumber
\\
&\leqslant 
-
\frac{3e_2}{4}
\lr{
(\p_xU_x,U_x)J(U)\p_x\WW, \WW
} 
+C\,\widetilde{D}(t)^2.
\label{eq:ma9}
\end{align}
Therefore, from \eqref{eq:ma8}, and \eqref{eq:ma9}, 
it follows that $R_8=R_{81}+R_{82}$ is bounded as follows:
\begin{align}
R_8
&\leqslant 
-\frac{e_1}{2}\,
\lr{
(\p_x^2\WW, U_x)J(U)U_x, \WW
}
+
\left(e_1-\frac{3e_2}{4}\right)\,
\lr{
(\p_xU_x,U_x)J(U)\p_x\WW, \WW
}
\nonumber
\\
&\quad
+C\,\widetilde{D}(t)^2.
\label{eq:ma10}
\end{align}
Consequently, by substituting \eqref{eq:ma2} and \eqref{eq:ma10} 
into \eqref{eq:ma1}, 
we have 
\begin{align}
\lr{
\p_t\WW, \widetilde{\Lambda}
}
&\leqslant 
-\frac{e_1}{2}\,
\lr{
(\p_x^2\WW, U_x)J(U)U_x, \WW
}
\nonumber
\\
&\quad
+
\left(e_1-\frac{3e_2}{4}\right)\,
\lr{
(\p_xU_x,U_x)J(U)\p_x\WW, \WW
}
+C\,\widetilde{D}(t)^2.
\label{eq:tom2}
\end{align}
\par
Collecting the information 
\eqref{eq:TW1}, \eqref{eq:WWt}, \eqref{eq:tom1}, \eqref{eq:tom2}, 
and \eqref{eq:e1e2}, 
we conclude 
\begin{align}
\frac{1}{2}
\frac{d}{dt}
\|\TW\|_{L^2}^2
&\leqslant 
(c+aS-e_1)\,
\lr{
(\p_x^2\WW,U_x)J(U)U_x, 
\WW
}
\nonumber
\\
&\ \ 
+
\left(
\frac{aS+6b+7c}{2}+e_1-e_2
\right) 
\lr{
(\p_xU_x,U_x)
J(U)\p_x\WW, 
\WW
}
+C\,\widetilde{D}(t)^2
\nonumber
\\
&=
C\,\widetilde{D}(t)^2, 
\nonumber
\end{align}
which is the desired result \eqref{eq:mainineq}. 
\end{proof}
%\vspace{2em}
%\\
%
{\bf Acknowledgements.} \\
The author would like to thank Hiroyuki Chihara 
for valuable comments and encouragement. 
Thanks to his comments in \cite{chihara2}, 
the proof of Theorem~\ref{theorem:uniqueness} and
\ref{theorem:existence} 
is improved to be 
comprehensible.
This work is supported by 
JSPS Grant-in-Aid for 
Young Scientists (B) \#24740090.

%%%%%% End
%%%%%%
%
%%%%%%
%%%%%%
%%%%%% References
%%%%%%

%%%%%%
%%%%%%
%%%%%%
%%%%%%
\end{document}